\documentclass[11pt]{article}
\usepackage{amsmath, amsfonts, amssymb, amsthm}
\usepackage{graphicx, mathrsfs, dsfont}
\usepackage{enumitem}
\usepackage{comment}
\usepackage[numbers,sort&compress]{natbib}
\usepackage{hyperref}
\usepackage{tikz,pgf}
\usepackage{pgfplots}
\usetikzlibrary{shapes.geometric}
\usetikzlibrary{arrows}
\usetikzlibrary{automata,patterns,calc}
\usepackage{tikz-3dplot}
\pgfplotsset{compat=1.18}
\usepackage[T1]{fontenc}

\usepackage{float}
\providecommand{\U}[1]{\protect\rule{.1in}{.1in}}

\newtheorem{theorem}{Theorem}[section]
\newtheorem{corollary}[theorem]{Corollary}
\newtheorem{lemma}[theorem]{Lemma}
\newtheorem{proposition}[theorem]{Proposition}
\newtheorem{definition}[theorem]{Definition}
\theoremstyle{definition}
\newtheorem{remark}[theorem]{Remark}

\newtheorem{example}[theorem]{Example}

\newcommand{\R}{\mathbb{R}}

\newcommand{\N}{\mathbb{N}}

\newcommand{\proj}{\mathrm{proj}}

\usepackage{todonotes}

\title{The reach and limits of slope eikonal equations in compact spaces}
\date{}
\author{David Salas, Sebasti\'an Tapia-Garc\'ia, Francisco Venegas M.}
\begin{document}

\maketitle

\begin{abstract}
It is a well known fact that the eikonal equation is well posed in complete length spaces. 
Among the studied notions of solutions in the literature, there is one that can be defined in \emph{any} metric space using the local (descent) slope and considering \emph{pointwise solutions}: functionals such that their slope coincides with the prescribed data at every point of the domain. In this work we explore the question ``Can we characterize the class of compact metric spaces in which every slope eikonal equation (under standard assumptions) always admits a pointwise solution?''. We provide a purely metric characterization of these spaces, as well as some interesting examples and counterexamples that illustrate the reach and limitations of the concept.  
\end{abstract}

\section{Introduction}\label{sec:intro}

Hamilton-Jacobi equations represent a paramount class of nonlinear first-order partial differential equations, 
continuing to be a fruitful field of research. Typically, a (stationary) Hamilton-Jacobi equation is written as

\[H(x,u,Du)=0 \quad\text{in }\Omega\subset\R^n,\]

\noindent with appropriate boundary conditions in $\partial \Omega$. In this setting, the viscosity theory developed by Crandall and Lions \cite{CrandallLions1983,CrandallEvansLions1984} provides a robust framework to study these equations, with a flexible notion of weak solution that allows for comparison, stability and uniqueness results. 
\\

A particular case of a Hamilton-Jacobi equation is the so-called eikonal equation, which is typically written as

\begin{equation*}
    \begin{cases}
   |D u(x)|=\ell(x) \quad&\text{for all } x\in \Omega,  \\
    u(x)  = g(x)  \quad&\text{for all } x\in \partial \Omega,
\end{cases}
\end{equation*} 
for appropriate right-hand side $\ell$ and boundary data $g$.\\

In recent years, there have been numerous efforts to develop the theory of viscosity solutions and Hamilton-Jacobi equations (and to the eikonal equation in particular) in the framework of metric spaces due to its numerous applications, see, e.g., \cite{balogh2012functional,cardaliaguet2013notes,cardaliaguet2008deterministic,feng2012hamilton,feng2013optimal,nguyen2008hamilton,gozlan2014hamilton,lasry2007mean,LT2025,lott2007hamilton,jerhaoui2024viscosity,gangbo2015existence,gigli2025viscosity}.\\

A significant challenge in this endeavor is to find a suitable replacement for the notion of derivative. In the case of the eikonal equation, this has been explored by several authors with different approaches to defining extensions of viscosity solutions:  Ambrosio and Feng in \cite{Ambrosio2014:OnAClass}; Gangbo and Święch in \cite{Gangbo2015:Metric}; and also Giga, Hamamuki and Nakayasu in \cite{Giga2015:Eikonal}.\\

More recently, in \cite{Liu2021:Equivalence}, Liu, Shanmugalingam and Zhou studied eikonal equations based on the notion of \emph{local slope} (also known as descent slope, metric slope or De Giorgi slope, first introduced in \cite{GMT1980} to study evolution equations in metric spaces), which is a purely metric object that recovers the scalar notion of the norm of the gradient. For a metric space $(X,d)$, a function $u:X\to\R\cup\{+\infty\}$ and a point $\overline x\in\mathrm{dom}\,u$, the local slope of $u$ at $\overline x$ is defined by
\begin{equation}\label{eq:Def-Slope}
    s[u](\overline{x}):=\limsup_{x\to \overline{x}}\dfrac{\max\{u(\overline{x})-u(x),0\}}{d(\overline{x},x)},
\end{equation}
under the convention that $s[u](\overline{x})=0$ if $\overline{x}$ is isolated. The local slope has been used to develop many other results in, for example, determination \cite{DaniilidisMicloSalas2024:Modulus,DaniilidisSalas2022:Determination,DaniilidisTriSalas2024:Compatibility}, stability \cite{DaniilidisDrusvyatskiy2023:Robustly,DaniilidisSalasTapia-Garcia2025:Attouch,PerezThibaultZagrodny2025:Convergence}, geometric descent \cite{Drusvyatskiy2015:Curves}, error bounds \cite{FabianEtAl:2010:Error}, Lipschitz extensions \cite{DePontiSomaglia2025:Global} and even Banach-Stone-type theorems \cite{DJV}. These results reinforce the value of the local slope as first-order information in metric spaces.
\\ 

Concretely, the approach in \cite{Liu2021:Equivalence} was to extend the notion of Monge solutions to the eikonal equation (introduced by Newcomb II and Su in \cite{newcomb1995eikonal} and further developed by Briani and Davini in \cite{BrianiDavini2005:Monge}), by considering the continuous pointwise solutions of 
\begin{align} \label{eq: eikonal introduction}
    \begin{cases}
        s[u](x)=\ell(x),\quad &\text{for all }x\in\Omega,\\
        u(x)=g(x),\quad&\text{for all }x\in \partial\Omega.
    \end{cases}
\end{align}

We call \eqref{eq: eikonal introduction} as the \emph{slope eikonal equation}. In a striking result, it is shown in \cite{Liu2021:Equivalence} that the notions of viscosity solutions to the eikonal equation mentioned so far coincide on length spaces.\\ 

Even though length spaces are a very important setting in metric analysis, one of the most appealing features of \eqref{eq: eikonal introduction} is that the notion of continuous pointwise solutions using local slopes allows to formulate the eikonal equation in arbitrary complete metric spaces. The local slope is always well-defined and, even more, the aforementioned determination results of \cite{DaniilidisMicloSalas2024:Modulus,DaniilidisSalas2022:Determination,DaniilidisTriSalas2024:Compatibility} can be seen as \emph{uniqueness} results for \eqref{eq: eikonal introduction}, valid in arbitrary metric spaces.\\

At this point a warning arises concerning slope eikonal equations: in contrast with the previously mentioned viscosity-type approaches, well-posedness (in the sense of existence and uniqueness of solutions) is not guaranteed. 
The notion used in \cite{Ambrosio2014:OnAClass, Gangbo2015:Metric} requires the metric space to be a length space for well-posedness, while the one defined in \cite{Giga2015:Eikonal} requires the existence of absolutely continuous curves connecting points in the domain of the equation with the boundary of said domain.\\

The goal of the present work is to characterize the class of metric spaces where, under some standard assumptions on $\ell$ and $g$, the slope eikonal equation \eqref{eq: eikonal introduction} is well-posed, in the sense that it admits continuous pointwise solutions. In this first study, we focus on the case of compact metric spaces, considering only equations where the prescribed slope $\ell(\cdot)$ is continuous and bounded, with $\inf_{\Omega}\ell >0$. Even in this setting, it is not clear how to characterize these spaces. Nevertheless, such a characterization delineates the reach and limits of slope eikonal equations and, in some sense, of the local slope as first-order information.


\subsection{A simple counterexample}

One might hope that, for every metric space, equation~\eqref{eq: eikonal introduction} admits solutions, at least under some suitable conditions. Before we continue, let us exhibit a first example where no eikonal equation admits solutions.\\

Let $X=[0,1]$ and let $\Omega$ be any proper  open subset of $X$. Endow $X$ with the snowflake distance $d(x,y) = \sqrt{|x-y|}$. Set $g:\partial\Omega\to \R$ as $g\equiv 0$. Suppose that with this boundary data, there exists a positive function $\ell:\Omega\to (0,+\infty)$ such that equation~\eqref{eq: eikonal introduction} admits a continuous solution $u:\overline{\Omega}\to \R$. Note that $u$ must be strictly positive on $\Omega$ since it cannot have a global minimum there. By construction, $u$ can be continuously extended to $X$ by setting $u(x) = 0$ for all $x\in X\setminus \overline{\Omega}$, as consequence, $s[u](x) = 0$ on $X\setminus \Omega$.  Thus, we can apply the following determination result (which si directly deduced from \cite{DaniilidisSalas2022:Determination}) valid in any compact metric space:

\begin{theorem}[Determination Theorem,~{\cite[Theorem 2.4]{DaniilidisSalas2022:Determination}}] Let $(X,d)$ be any  compact metric space, and let $u_1,u_2:X\to \R$ be two continuous functions such that
	\begin{itemize}
		\item $s[u_1](x) = s[u_2](x) <+\infty$ for all $x\in X$; and
		\item For all $x\in X$ with $s[u_1](x) = s[u_2](x) = 0$, $u_1(x) = u_2(x)$.
	\end{itemize}
Then, $u_1 = u_2$.	
\end{theorem}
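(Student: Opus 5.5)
By symmetry it suffices to show $u_1\le u_2$ on $X$. Set $w:=u_1-u_2$. This is continuous on the compact space $X$, so it attains its maximum $m$ at some point. Assume for contradiction that $m>0$. The plan is to perturb $w$ slightly, take a maximizer of the perturbation, and then compare slopes of $u_1$ and $u_2$ at that point to reach a contradiction. Note first that at every point $x$ with $s[u_1](x)=0$ we have $u_1(x)=u_2(x)$, so $w(x)=0<m$. Hence no maximizer of $w$ is a critical point, in the sense of zero slope.

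\textbf{The slope comparison at a maximizer.} Let $\bar x$ be a maximizer of $w$. For any $x$,
\[
u_2(\bar x)-u_2(x)\ \ge\ u_1(\bar x)-u_1(x),
\]
because $w(x)\le w(\bar x)$. Taking positive parts, dividing by $d(\bar x,x)$ and passing to the $\limsup$ gives $s[u_2](\bar x)\ge s[u_1](\bar x)$. This is consistent with equality of the slopes, so it gives no contradiction by itself. We therefore need a strict gain. To get one, I would use a multiplicative perturbation instead of a difference. Fix $\lambda>1$ close to $1$ and consider
\[
w_\lambda:=u_1-\lambda u_2
\]
(after adding constants so that $u_2>0$, and adjusting the hypothesis accordingly; this is harmless since constants do not change slopes). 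Alternatively, consider $u_1-\lambda u_2$ with $\lambda<1$, chosen so that a maximizer still has $w_\lambda>0$; this is possible for $\lambda$ close to $1$. At a maximizer $x_\lambda$ of $w_\lambda$ the same computation gives
\[
\lambda\, s[u_2](x_\lambda)\ \ge\ s[u_1](x_\lambda)=s[u_2](x_\lambda).
\]
With $\lambda<1$ this forces $s[u_2](x_\lambda)=0$, hence $s[u_1](x_\lambda)=0$, hence $u_1(x_\lambda)=u_2(x_\lambda)$.

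\textbf{Closing the argument.} At $x_\lambda$ we therefore have
\[
w_\lambda(x_\lambda)=(1-\lambda)\,u_2(x_\lambda).
\]
On the other hand, $w_\lambda(x_\lambda)\ge w_\lambda(\bar x)=m+(1-\lambda)u_2(\bar x)$. Letting $\lambda\uparrow1$ and using boundedness of $u_2$ on the compact space yields $0\ge m$, which is the desired contradiction. The finiteness assumption $s<+\infty$ is what makes the step ``$\lambda s\ge s$ forces $s=0$'' valid.

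\textbf{Main obstacle.} The delicate step is justifying the slope inequality for $\lambda u_2$, namely $s[\lambda u_2]=\lambda\, s[u_2]$ for $\lambda>0$, together with the comparison at a maximizer. The issue is sign handling in the positive part $\max\{\cdot,0\}$. The fact needed is: if $a\ge b$ pointwise, then $\max\{a,0\}\ge\max\{b,0\}$. It has to be applied to
\[
a=\lambda\bigl(u_2(x_\lambda)-u_2(x)\bigr),\qquad b=u_1(x_\lambda)-u_1(x),
\]
which is exactly what the maximality of $w_\lambda$ at $x_\lambda$ gives. I expect this to go through directly, so that the multiplicative perturbation is the key idea. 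No shift of constants is even needed, since only $\lambda\in(0,1)$ and boundedness of $u_2$ are used.
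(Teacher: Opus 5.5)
The paper gives no proof of this result; it quotes it from Daniilidis--Salas. Your proposal therefore has to stand on its own, and as written it does not, because the inequality at the maximizer is reversed. If $x_\lambda$ maximizes $w_\lambda=u_1-\lambda u_2$, then $w_\lambda(x)\le w_\lambda(x_\lambda)$ rearranges to
\[
u_1(x_\lambda)-u_1(x)\ \ge\ \lambda\bigl(u_2(x_\lambda)-u_2(x)\bigr).
\]
In your notation this is $b\ge a$, not $a\ge b$. Maximality therefore only gives $s[u_1](x_\lambda)\ge\lambda\, s[u_2](x_\lambda)$. For $\lambda<1$ and $s[u_1]=s[u_2]$ this holds automatically and carries no information. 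The same slip already occurs for $\lambda=1$: at a maximizer of $u_1-u_2$ one gets $s[u_1](\bar x)\ge s[u_2](\bar x)$, not the reverse.

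A concrete check: on $X=[0,1]$ take $u_1=u_2=\mathrm{id}$, which satisfy all the hypotheses, and $\lambda=\tfrac12$. The maximizer of $w_\lambda$ is $x_\lambda=1$, where $s[u_1](1)=1>\tfrac12=\lambda\,s[u_2](1)$. So the inequality you derive from maximality alone fails, and the conclusion $s[u_2](x_\lambda)=0$ is false. This step, which is supposed to produce a critical point, is the engine of the whole argument. It breaks under the choice $\lambda\in(0,1)$ that you explicitly commit to at the end.

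The repair is the option you mentioned first and then dropped: take $\lambda>1$.
\begin{itemize}
\item Then $s[u_2](x_\lambda)=s[u_1](x_\lambda)\ge\lambda\,s[u_2](x_\lambda)$, and finiteness forces $s[u_2](x_\lambda)=0$.
\item By the hypothesis on critical points, $u_1(x_\lambda)=u_2(x_\lambda)$, so $w_\lambda(x_\lambda)=(1-\lambda)u_2(x_\lambda)$.
\item Comparing with $w_\lambda(\bar x)=m+(1-\lambda)u_2(\bar x)$ gives $m\le(\lambda-1)\bigl(u_2(\bar x)-u_2(x_\lambda)\bigr)\le 2(\lambda-1)\max_X|u_2|$.
\item Letting $\lambda\downarrow 1$ yields $m\le 0$.
\end{itemize}
Equivalently, you may keep $\lambda<1$ but maximize $\lambda u_1-u_2$ instead. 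Exchanging the roles of $u_1$ and $u_2$ then finishes. With this correction your argument is a complete and elementary proof. It uses only the existence of maximizers on the compact space, the identity $s[\lambda u]=\lambda\, s[u]$ for $\lambda>0$, and finiteness of the slopes, and no shifting by constants is needed.
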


The theorem above entails that $u$ must be the unique continuous function verifying that $s[u] = \ell$ in $\Omega$ and $u = 0$ in $X\setminus\Omega$. Now, choose $0<a<b<1$ such that $[a,b]\subset \Omega$ and let us define $\psi:[0,1]\to \R$ by
\[
\psi(x) = \begin{cases}
	x-a &x\in [a, (a+b)/2],\\
	b-x & x\in [(a+b)/2, b],\\
	0&\text{ otherwise}.
\end{cases}
\] 
It is easy to verify that $\psi$ is continuous and that for every $x\in (a,b)$, one has that
\[
s[\psi](x) = \limsup_{y\to x} \frac{[\psi(x)-\psi(y)]_+}{\sqrt{|x-y|}} =  \limsup_{y\to x} \frac{|x-y|}{\sqrt{|x-y|}} = 0.
\]
Thus, $\psi$ is a nonconstant function with zero slope everywhere (in fact, $\psi$ is an example of a \emph{locally flat} function, see for instance~\cite{Weaver}). One can readily verify that
\[
s[u+\psi](x) = s[u](x),\quad\text{for all }x\in X,
\] 

and so, since $u+\psi \equiv 0$ on $X\setminus\Omega$, we get a contradiction with the determination result of \cite{DaniilidisSalas2022:Determination}. Thus, in this metric space, no eikonal equation as~\eqref{eq: eikonal introduction} admits solutions. Moreover, every Lipschitz function in this space must have zero slope in a dense set, making the determination result of \cite{DaniilidisSalas2022:Determination} trivial. Here, the local slope fails to provide meaningful first-order information.

\subsection{Problem formulation and main contribution}

Our main goal is to characterize the class of compact metric spaces for which slope eikonal equations admit continuous pointwise solutions, which is a proper subclass of all compact metric spaces, as we have just shown.

Of course, we need to consider some compatibility condition between the prescribed slope $\ell$ in $\Omega$ and the Dirichlet data $g$ on $\partial \Omega$. Motivated by the classical setting of euclidean spaces, together with the developments in length spaces, we propose to consider the following: For $\Omega\subsetneq X$ a nonempty open set, we say that a pair of bounded continuous functions $\ell:\Omega\subset X\to \R$ and $g:\partial \Omega \to \R$, satisfies the compatibility condition \eqref{CC} if for every $x,y\in \partial \Omega$,
\begin{align} \tag{CC}\label{CC}
    g(x)-g(y)\leq \inf \left\{\int_0^T\ell(\gamma(t))dt \right\}
\end{align}
where the infimum is taken over the family of $1$-Lipschitz curves $\gamma:[0,T]\to \overline{\Omega}$ such that $\gamma(0)=y$, $\gamma(T)=x$ and $\gamma((0,T))\subset\Omega$, under the convention $\inf \emptyset =+\infty$. With this definition, we are ready to properly define eikonal spaces.
\begin{definition}
    A metric space $(X,d)$ is called an eikonal space if, for any nonempty open set $\Omega\subsetneq X$ and bounded continuous maps $\ell:\Omega\to \R$, with $\inf \ell>0$, and $g:\partial \Omega\to\R$, satisfying~\eqref{CC}, the eikonal equation
    \begin{equation} \label{eq: eikonal complete}
    \begin{cases}
        s[u](x)=\ell(x),\quad &\text{for all }x\in\Omega,\\
        u(x)=g(x),\quad&\text{for all }x\in \partial\Omega,
    \end{cases}
    \end{equation}
    admits a continuous pointwise solution. 
\end{definition}
We would like to point out that \textit{a priori} there are no further assumptions on the metric space $X$ besides the solvability of equation~\eqref{eq: eikonal complete}. Indeed, if there are no rectifiable curves connecting the points of $x,y\in \partial \Omega$, then the right-hand side of~\eqref{CC} is simply $+\infty$. One might think that the compatibility condition~\eqref{CC} imposes implicitly some level of rectifiability on the space, since otherwise too many pairs $(\ell,g)$ could fit the condition by vacuity.  Surprisingly, as a consequence of a result in \cite{Drusvyatskiy2015:Curves}, in the next section we show that if a compact metric space admits enough functions with continuous local slope, then it must be rectifiably connected, see Lemma~\ref{lax formula} and Corollary~\ref{cor: rec-connected}. Thus, \textit{a posteriori}, the compatibility condition~\eqref{CC} is not imposing any supplementary structure on eikonal spaces, but it is rather the appropriate condition to study slope eikonal equations with continuous data. \\

The main result of this work is the characterization of compact eikonal spaces by considerably reducing the family of equations that must admit a solution. 

\begin{theorem}\label{thm: main} Let $(X,d)$ be a compact metric space such that for every nonempty closed set $K\subset X$, the equation
\begin{equation}\label{eq: eikonal 0}
    \begin{cases}
        s[u](x)=1,\quad &\text{for all }x\in X\setminus K,\\
        u(x)=0,\quad&\text{for all }x\in K.
    \end{cases}
    \end{equation}
admits a continuous pointwise solution. Then, $(X,d)$ is an eikonal space.
\end{theorem}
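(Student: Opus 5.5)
Starting from the hypothesis on distance-type equations, I would build a solution of \eqref{eq: eikonal complete} through a Lax/Hopf--type optimal control formula with respect to an intrinsic metric weighted by $\ell$. The first step is to get structure out of the hypothesis. Taking $K=\{p\}$, we obtain for each $p$ a continuous $u_p$ with $s[u_p]=1$ off $p$ and $u_p(p)=0$. Using the descent curves of \cite{Drusvyatskiy2015:Curves} (slope bounded below on a compact space gives rectifiable curves of near-steepest descent; this is the content of Lemma~\ref{lax formula} and Corollary~\ref{cor: rec-connected}), every point is joined to $p$ by a rectifiable curve. Together with the determination theorem, this should identify $u_p$ with the length (intrinsic) distance $d_\ell^X(\cdot,p)$, i.e. the infimum of lengths of curves to $p$. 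In particular this intrinsic distance is finite and continuous with respect to $d$, and more generally for closed $K$ the solution is $u_K=\inf_{q\in K} d_{\mathrm{int}}(\cdot,q)$. The key consequence I want to extract is that the intrinsic metric induces the same topology as $d$, and that its slope with respect to $d$ equals $1$, so that $d$ and the length metric agree infinitesimally along descent directions.

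The second step defines the candidate solution. For $x\in\overline\Omega$ set
\[
u(x)=\inf\Big\{ g(y)+\int_0^T \ell(\gamma(t))\,dt \Big\},
\]
where the infimum runs over $y\in\partial\Omega$ and $1$-Lipschitz curves $\gamma:[0,T]\to\overline\Omega$ from $y$ to $x$ with $\gamma((0,T))\subset\Omega$. Condition \eqref{CC} gives $u=g$ on $\partial\Omega$. Boundedness of $\ell$, together with the first step applied to $K=X\setminus\Omega$, should give finiteness and continuity of $u$, once we have local control of intrinsic distance by $d$.

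The third step checks $s[u]=\ell$ in $\Omega$.

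The upper bound $s[u](x)\le \ell(x)$ requires showing that $u(x)-u(z)\le (\ell(x)+\varepsilon)\,d(x,z)$ for $z$ near $x$. This is where the hypothesis must be used: we need curves from $z$ to $x$ of length $\le(1+o(1))\,d(x,z)$. Equivalently, the intrinsic metric must be locally $1$-bi-Lipschitz to $d$ in the limit, i.e. $X$ is locally "almost length".

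I expect exactly this to be the main obstacle. I would try to derive it from $s[u_{\{x\}}]=1$ combined with the sign of the slope. Since $u_{\{x\}}$ has minimum at $x$ and is an intrinsic distance, $u_{\{x\}}\ge d(\cdot,x)$. However, slope only bounds decrease, so the estimate must come from applying the hypothesis to the right compact sets: for instance $K=\overline B(x,r)$ or $K=\{x\}\cup(X\setminus B(x,r))$, then comparing the unique solutions via the Determination Theorem against $d(\cdot,x)$-type competitors.

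The lower bound $s[u](x)\ge\ell(x)$ should follow from near-optimal curves: moving backward along an almost minimizing curve ending at $x$ decreases $u$ at rate about $\ell(x)$ per unit of length. This needs length $\approx d$ along short pieces, which holds at almost every parameter for metric derivative $1$; I would arrange this by reparametrizing and using lower semicontinuity.
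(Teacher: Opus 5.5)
\textbf{There is a genuine gap, and the statement itself appears to be false for non-constant $\ell$.} Your plan depends on the upper bound $s[u](x)\le\ell(x)$, which you rightly flag as the obstacle. The route you propose for it aims at a statement that does not hold.

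\textbf{Your intermediate claim is false.} You want curves from $z$ to $x$ of length $(1+o(1))\,d(x,z)$, i.e.\ $d_I(x,z)/d(x,z)\to1$ as $z\to x$, and you hope to extract this from the hypothesis. The hypothesis does not give it, even pointwise. The non-quasiconvex example of Section~\ref{sec:examples}, built from the legs $P_n\subset\ell^1(\N)$, satisfies the hypothesis of Theorem~\ref{thm: main}; that is exactly what is checked there. Yet $d_I(0,p_n)=(1+2n)\,d(0,p_n)$ for $p_n=e_{2n}/n^2\to0$.

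\textbf{A counterexample to the theorem.} In $\ell^1(\N)$ let $d_n=8^{-n}$, $t_n=2^{-n}$, $x_n=d_ne_{n+1}$, $c_n=t_ne_1+x_n$. Set $R_n=\{te_1+\lambda x_n:\ t_n\le t\le1,\ 0\le\lambda\le1\}$, $F_n=\{te_1+x_n:\ 0\le t\le t_n\}$, and $X=[0,1]e_1\cup\bigcup_n(R_n\cup F_n)$, which is compact.
\begin{itemize}
\item Every $z\neq0$ has a neighbourhood on which $d_I(z,\cdot)=d(z,\cdot)$, since nearby points are reached by L-shaped $\ell^1$-geodesics inside $X$. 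Hence $s[d_I(\cdot,K)](z)=1$ whenever $z\notin K$. Also $d_I(0,y)\le2t_n+d_n$ on $F_n$, so $d_I$ is continuous.
\item Let $0\notin K$. Points $y$ near $0$ off the flaps satisfy $d_I(0,y)=d(0,y)$. Each flap $F_n$ is a dead end entered only through $c_n$, and for large $n$ both $F_n$ and $c_n$ avoid $K$. So for $y=se_1+x_n$ we get $d_I(0,K)-d_I(y,K)\le d_I(0,c_n)-d_I(y,c_n)=(t_n+d_n)-(t_n-s)=d(0,y)$, and the slope at $0$ is also $1$.
\end{itemize}
Thus $X$ satisfies the hypothesis of Theorem~\ref{thm: main}.

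Now take $E=\{e_1+\lambda x_n:\ n\in\N,\ 0\le\lambda\le1\}$, $\Omega=X\setminus E$, $g\equiv0$. Let $\ell\equiv1$ except on the flaps: $\ell=1-t_n$ on the half of $F_n$ containing $x_n$, and $\ell$ is affine on the other half. This $\ell$ is continuous (only H\"older at $0$), with $\ell(0)=1$ and $\inf\ell\ge\tfrac12$.
\begin{itemize}
\item Deleting excursions into the dead-end flaps shows $V(0)=1$.
\item Running along $F_n$ and then the top edge of $R_n$ gives $V(x_n)\le1-\tfrac34t_n^2$.
\end{itemize}
Hence $(V(0)-V(x_n))/d(0,x_n)\ge\tfrac34\,2^n\to\infty$, and by Corollary~\ref{cor: uniqueness} there is no continuous pointwise solution. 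Distance functions survive only because of the exact identity $d_I(0,c_n)-d_I(x_n,c_n)=d(0,x_n)$. A continuous weight that drops by $t_n$ across the gap of width $d_n$ between $[0,t_n]e_1$ and $F_n$ shifts this cost difference by about $t_n^2\gg d_n$.

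\textbf{Comparison with the paper's proof.} For constant $\ell$ the paper argues one-sidedly, and you could adopt this (Proposition~\ref{prop:E01->E0g}). Near $x$ the candidate equals a constant plus $d_I(\cdot,X\setminus\Omega_\varepsilon)$, with $\Omega_\varepsilon$ the interior of a superlevel set. So the hypothesis applies directly for every compatible $g$, with no almost-length property needed.

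For variable $\ell$ the paper encodes $V/K_\varepsilon$ on the offending sequence as boundary data at height $0$ in $X\times[0,1]$ with $\rho=d+|\cdot|$, and reads the slope at height $1$. This requires the product to inherit the hypothesis, and that inheritance fails already for the legs example. Take $\sigma_n=d_I(0,p_n)$ and $\widetilde K=\{(0,0)\}\cup\{(p_n,\sigma_n):\ n\ge3\}$. Then $\rho_I((0,1),\widetilde K)=1$, $\rho_I((p_n,1),\widetilde K)\le1-\sigma_n$ and $\rho((0,1),(p_n,1))=n^{-2}$, so $\rho_I(\cdot,\widetilde K)$ has infinite slope at $(0,1)$. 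The proof of the product proposition uses $s[W](\bar x)=1$ even though $\bar x$ may belong to the data set $S$, as it does here.

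\textbf{What your outline does deliver.}
\begin{itemize}
\item The lower bound $s[u]\ge\ell$. No metric-derivative argument is needed: along an optimal (or $o(t)$-optimal) $1$-Lipschitz curve $\gamma$ from $x$, one has $d(x,\gamma(t))\le t$, which is the favourable inequality.
\item Continuity of the candidate, except that continuity at $\partial\Omega$ needs \eqref{CC} explicitly, not just continuity of $d_I$.
\end{itemize}
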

Noting that the solution of the above equation must be the intrinsic distance to the set $K$, see Lemma~\ref{lax formula}, we have the following consequence.
\begin{corollary}\label{cor:intrinsic-distance-to-K}
    A compact metric space $(X,d)$ is eikonal if and only if, for every nonempty closed set $K\subset X$, we have that $s[d_I(\cdot,K)]\equiv1$ on $X\setminus K$. 
\end{corollary}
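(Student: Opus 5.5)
The corollary is an equivalence, and I will prove the two directions separately, using Theorem~\ref{thm: main} together with the identification of solutions provided by Lemma~\ref{lax formula}. The key fact I take from Lemma~\ref{lax formula} is the following: if $u$ is a continuous pointwise solution of \eqref{eq: eikonal 0} for a closed set $K$, then $u = d_I(\cdot,K)$. Here $d_I$ is the intrinsic (length) distance induced by $d$, namely the infimum of lengths of rectifiable curves, with the value $+\infty$ when no such curve exists.

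\emph{Sufficiency.} Assume $s[d_I(\cdot,K)]\equiv 1$ on $X\setminus K$ for every nonempty closed $K$. To invoke Theorem~\ref{thm: main}, I must show that $u=d_I(\cdot,K)$ is a continuous pointwise solution of \eqref{eq: eikonal 0}. It vanishes on $K$, and its slope equals $1$ off $K$ by hypothesis, so only finiteness and continuity remain.

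\emph{Finiteness.} The slope is finite at every point of $X\setminus K$. Hence near each such $x$, any point $y$ with $u(y)<u(x)$ satisfies $u(x)-u(y)\le 2d(x,y)$. This rules out the value $+\infty$ propagating in an uncontrolled way, but it is not yet a proof. More simply, I would use Corollary~\ref{cor: rec-connected}: the hypothesis provides enough functions with continuous slope, so $X$ is rectifiably connected and $d_I(\cdot,K)<\infty$.

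\emph{Continuity.} Since $d\le d_I$, the function $u$ is lower semicontinuous with respect to $d$. For upper semicontinuity, I would use the descent-curve argument behind Lemma~\ref{lax formula} (from \cite{Drusvyatskiy2015:Curves}). Since the slope of $u$ is at least $1$ off $K$, curves of near-steepest descent starting at $x$ reach $K$ with length at most $u(x)$. The same mechanism gives $d_I(x,y)\to 0$ as $d(x,y)\to 0$, which yields continuity. This is the step I expect to be the main obstacle, and I would rely on the paper's lemmas to carry it rather than redo it. Once these points are settled, Theorem~\ref{thm: main} shows that $X$ is eikonal.

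\emph{Necessity.} Assume $X$ is eikonal and let $K$ be a nonempty closed set. If $K=X$ there is nothing to prove. Otherwise, take $\Omega=X\setminus K$, which is a nonempty open proper subset, together with $\ell\equiv1$ and $g\equiv 0$ on $\partial\Omega\subset K$. These data are bounded and continuous, $\inf\ell=1>0$, and \eqref{CC} holds trivially because $g(x)-g(y)=0$ is at most a nonnegative infimum. The eikonal property then provides a continuous $u$ on $\overline\Omega$ with $s[u]=1$ on $\Omega$ and $u=0$ on $\partial\Omega$.

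Extend $u$ by $0$ on $K$. This extension is continuous because $u=0$ on $\partial\Omega$, and the slope on $\Omega$ is unchanged because $\Omega$ is open. So the extended $u$ solves \eqref{eq: eikonal 0}. By Lemma~\ref{lax formula}, $u=d_I(\cdot,K)$, and therefore $s[d_I(\cdot,K)]\equiv1$ on $X\setminus K$.
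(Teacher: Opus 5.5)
Your necessity direction is correct and is exactly the paper's argument. The eikonal property with $\ell\equiv1$, $g\equiv0$ yields a continuous solution of \eqref{eq: eikonal 0}, and Lemma~\ref{lax formula} identifies it with $d_I(\cdot,K)$.

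The gap is the continuity step in the sufficiency direction, and none of the tools you invoke can close it.
\begin{itemize}
\item Lemma~\ref{lax formula} and Theorem~\ref{thm: Drus} both assume the function is continuous. So descent curves are not available for $d_I(\cdot,K)$ before you know it is continuous. Even granted, they only return the tautology $d_I(x,K)\ge \mathrm{len}(\gamma)\ge d_I(x,K)$, which says nothing about $d_I(x,y)\to0$ as $d(x,y)\to0$.
\item Corollary~\ref{cor: rec-connected} assumes that $X$ is already eikonal, so using it is circular. Finiteness is in any case implicit in your hypothesis, since the slope is only defined on the domain.
\item The inequality $d\le d_I$ does not give lower semicontinuity; it only says the $d_I$-topology is finer. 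Lower semicontinuity does hold in compact spaces, but by an Arzel\`a--Ascoli argument on almost-minimizing curves.
\end{itemize}

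More seriously, continuity does not follow from the slope condition at all, so this step cannot be repaired.
\begin{itemize}
\item \emph{Construction.} In $\ell^2(\N)$ let $F=[0,e_1]$. For each $n$, let $S_n$ be the following arc. It is a $45^\circ$ sawtooth in $\mathrm{span}\{e_1,e_{n+1}\}$, at height between $1/n$ and $2/n$ above $F$, running from a point $x_n$ with $\|x_n\|\le 2/n$ to a point above $e_1$. It is closed off by the segment down to $e_1$.
\item \emph{Not eikonal.} $X=F\cup\bigcup_n S_n$ is compact and the arcs meet only at $e_1$. Hence $x_n\to0$ while $d_I(x_n,0)\ge 1+\sqrt2$. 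So $d_I(\cdot,\{0\})$ is discontinuous, and by Lemma~\ref{lax formula} with $\Omega=X\setminus\{0\}$, $X$ is not eikonal.
\item \emph{Slope $\ge 1$.} For every nonempty closed $K$, the slope of $d_I(\cdot,K)$ off $K$ is at least $1$ along a minimizing curve.
\item \emph{Slope $\le 1$ off $F$.} Every point off $F$ has a neighbourhood made of straight segments issuing from it, on which $d_I=d$.
\item \emph{Slope $\le 1$ on $F$.} Take $p\in F\setminus K$, including $e_1$. The vertical segments issue straight from $e_1$, so they are harmless there. Orthogonal projection onto $F$ shrinks lengths along the sawtooth by the factor $1/\sqrt2$, and $K$ is closed. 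From this one checks that $d_I(y,K)>d_I(p,K)$ for every $y$ in the sawtooth parts sufficiently close to $p$. So these points never produce descent, and $s[d_I(\cdot,K)]\equiv1$ on $X\setminus K$.
\end{itemize}

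Hence the corollary has to be read with continuity of $d_I(\cdot,K)$ included in the hypothesis. The paper's one-line derivation tacitly does this when it says the solution of \eqref{eq: eikonal 0} must be the intrinsic distance. With that reading, $d_I(\cdot,K)$ is a continuous pointwise solution of \eqref{eq: eikonal 0}, sufficiency follows immediately from Theorem~\ref{thm: main}, and no continuity argument is needed.
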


\paragraph{Outline of the manuscript:} After short preliminaries in Section~\ref{sec:pre}, we show in Section~\ref{sec:representation-formula} that the existence of functions with continuous slope relates to rectifiable connectivity of the space, as well as representation formula for the (unique) candidate of solution of the slope eikonal equation. In Section~\ref{sec:examples}, we provide several examples and counterexamples narrowing down what eikonal spaces must be. In particular, we show that this notion is not stable by bi-Lipschitz equivalence and we provide an example of a rectifiably connected space bi-Lipschitz equivalent to a geodesic space which fails to admit solutions for any slope eikonal equation with continuous data. We prove our main theorem in Section~\ref{sec:main}. We finish the work in Section~\ref{sec:Comparison} by comparing our results with the theory of viscosity solutions of eikonal equations previously developed in \cite{Giga2015:Eikonal,Gangbo2015:Metric,Ambrosio2014:OnAClass,Gangbo2014:Optimal,Liu2021:Equivalence}, and presenting some perspectives. 

\section{Preliminaries}\label{sec:pre}
 For a metric space $(X,d)$, we denote by $\mathcal{C}_b(X)$ and $\mathcal{C}(X)$ the spaces of real valued bounded continuous function and continuous functions respectively.  For $T>0$, we denote by $\mathrm{Lip}_1([0,T],X)$ and $AC([0,T],X)$ the spaces of $1$-Lipschitz curves and absolutely continuous curves from $[0,T]$ to $X$, respectively.
 For a continuous curve $\gamma:[0,T]\to X$, with $T>0$, its \emph{length} is defined by
     \[\mathrm{len}(\gamma):= \sup\left\{\sum_{i=0}^{n-1} d\left(\gamma(t_i),\gamma(t_{i+1})\right)~:~0=t_0\leq t_1\leq \ldots\leq t_n=T\right\}\in [0,+\infty],\]
     and $\gamma$ is said to be \emph{rectifiable} if $\mathrm{len}(\gamma)<+\infty$. A metric space where every pair of points can be connected through a rectifiable curve is called a \emph{rectifiably connected} space.\\
     
     The \emph{intrinsic distance}, ${d_I:X\times X\to [0,+\infty]}$, is defined by 
    \[ d_I(x,y)=\inf\left\{\mathrm{len}(\gamma)~|~\gamma:[0,T]\to X\text{ is a continuous curve, }\gamma(0)=x \text{ and }\gamma(T)=y\right\}.\]
    Here we use again the convention $\inf \emptyset =+\infty$.
    The intrinsic distance $d_I$ defines a metric on $X$ if and only if $(X,d)$ is rectifiably connected.
    Note that $d\leq d_I$ for any metric space. Recall that a metric space is said to be a length space if $d = d_I$.\\

We close this section with a dynamic programming principle, which can be found \cite[Lemma~4.1]{Giga2015:Eikonal}. The arguments to prove this lemma follow as in the classical theory on $\R^n$.

\begin{lemma}[Dynamic Programming Principle]\label{DPP}
     Let $(X,d)$ be a metric space and let $\Omega\subset X$ be an open set such that $\partial \Omega\neq\emptyset$. 
Let $\ell \in \mathcal{C}(\Omega)$, with $\ell >0$, and $g\in \mathcal{C}(\partial \Omega)$. Assume that the value function $V:\Omega\to\R$ defined by
\[V(x):=\inf\left\{ 
     \int_0^T \ell(\gamma(s))ds+g(\gamma(T))\,\colon \begin{array}{l}
     ~ T>0,~\gamma\in \mathrm{Lip}_1([0,T],\overline{\Omega}),\\
     \gamma(0)=x,~\gamma(T)\in \partial \Omega,~\gamma([0,T))\subset\Omega
     \end{array}
     \right\}\]
is finitely-valued. Then, for all $x\in \Omega$,
\[V(x)=\inf\left\{\int_0^T\ell(\gamma(s))ds+ V(\gamma(T)):~T>0,~\gamma\in \mathrm{Lip}_1([0,T],\Omega) \right\}.\]
\end{lemma}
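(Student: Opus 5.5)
We prove the identity by establishing the two inequalities separately. Denote by $W(x)$ the right-hand side, the infimum over $T>0$ and $\gamma\in\mathrm{Lip}_1([0,T],\Omega)$ with $\gamma(0)=x$ of $\int_0^T\ell(\gamma)\,ds+V(\gamma(T))$. The only tools needed are concatenation and restriction of $1$-Lipschitz curves, together with additivity of the integral. No metric structure beyond this enters, so the classical Euclidean argument should transfer verbatim.

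\emph{Step 1: $W\le V$.} Fix $\varepsilon>0$ and choose an admissible curve $\gamma:[0,T]\to\overline\Omega$ for $V(x)$ whose cost is at most $V(x)+\varepsilon$. Since $\gamma([0,T))\subset\Omega$, for any $\tau\in(0,T)$ the restriction $\gamma|_{[0,\tau]}$ lies in $\mathrm{Lip}_1([0,\tau],\Omega)$. The shifted tail $\gamma(\tau+\cdot)$ on $[0,T-\tau]$ is admissible for $V(\gamma(\tau))$. Therefore
\[
V(\gamma(\tau))\le \int_\tau^T\ell(\gamma(s))\,ds+g(\gamma(T)),
\]
and hence $W(x)\le\int_0^\tau\ell(\gamma)\,ds+V(\gamma(\tau))\le V(x)+\varepsilon$. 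Letting $\varepsilon\to0$ gives $W(x)\le V(x)$.

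\emph{Step 2: $V\le W$.} Fix $\varepsilon>0$ and pick $\gamma\in\mathrm{Lip}_1([0,T],\Omega)$ with $\gamma(0)=x$ such that
\[
\int_0^T\ell(\gamma)\,ds+V(\gamma(T))\le W(x)+\varepsilon .
\]
Set $y=\gamma(T)\in\Omega$. Because $V(y)$ is finite, there is an admissible curve $\eta:[0,T']\to\overline\Omega$ from $y$ with cost at most $V(y)+\varepsilon$. Consider the concatenation $\sigma=\gamma*\eta$ on $[0,T+T']$. It is $1$-Lipschitz, since each piece is $1$-Lipschitz and the pieces agree at the junction, so the triangle inequality through the junction point applies. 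It satisfies $\sigma([0,T+T'))\subset\Omega$ and $\sigma(T+T')\in\partial\Omega$. Thus $\sigma$ is admissible for $V(x)$, and
\[
V(x)\le \int_0^T\ell(\gamma)\,ds+\int_0^{T'}\ell(\eta)\,ds+g(\eta(T'))\le W(x)+2\varepsilon .
\]
Letting $\varepsilon\to0$ gives $V(x)\le W(x)$.

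\emph{Main obstacle.} The only delicate points are bookkeeping, not analysis. First, finiteness of $V$ is what guarantees that near-optimal curves exist at intermediate points. Second, in Step 1 one must choose $\tau<T$ strictly, so that the restricted curve stays inside the open set $\Omega$ as required in the definition of $W$. Third, the integrals are well defined because $\ell\circ\gamma$ is continuous on compact intervals whenever $\gamma$ maps into $\Omega$. If instead one wants to allow $\tau=T$, one would use the convention $V=g$ on $\partial\Omega$; with $\tau<T$ this is unnecessary.
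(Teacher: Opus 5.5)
Your argument is correct and is exactly the classical restriction/concatenation proof, which the paper does not write out but delegates to \cite[Lemma~4.1]{Giga2015:Eikonal}. You also correctly read into the right-hand side the intended constraint $\gamma(0)=x$, which the displayed statement omits.

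One cosmetic fix in Step~2: choosing $\gamma$ with cost at most $W(x)+\varepsilon$ presupposes $W(x)>-\infty$. Instead, run the concatenation for an arbitrary admissible $\gamma$ to obtain $V(x)\le \int_0^T\ell(\gamma(s))\,ds+V(\gamma(T))+\varepsilon$, and then take the infimum.
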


\section{Representation formula in compact spaces}\label{sec:representation-formula}

In the classical setting, Hamilton-Jacobi equations are often associated to optimal control problems.
Indeed, it turns out that the value function for this optimal control problem happens to be a natural candidate for (viscosity) solution of the Hamilton-Jacobi equation, see e.g. \cite{Barles2013}.
In this line, our next result states that, for an open subset $\Omega$ of a compact metric space, if there is a continuous function $f:\overline{\Omega}\to\R$ with continuous descent slope $s[f]$ on $\Omega$, then the space admits enough rectifiable curves so that $f$ can be described as a value function of an optimal control problem.\\

\begin{lemma}\label{lax formula}
    Let $(X,d)$ be a compact metric space and let $\Omega\subset X$ be nonempty open set. 
    Let $f:\overline{\Omega}\to\R$ be a continuous function such that $s[f]:\Omega \to \R$ is continuous, with $\inf_{\Omega} s[f]>0$.
    Then, $\partial \Omega \neq \emptyset$ and for all $x\in \Omega$ we have 
    \begin{equation} \label{eq:Lax-pancho}
     f(x)=\inf\left\{ 
     \int_0^T \ell(\gamma(s))ds+f(\gamma(T))\,\colon \begin{array}{l}
     ~ T>0,~\gamma\in \mathrm{Lip}_1([0,T],\overline{\Omega}),\\
     \gamma(0)=x,~\gamma(T)\in \partial \Omega,~\gamma([0,T))\subset\Omega
     \end{array}
     \right\},
    \end{equation}
    where $\ell:=s[f]$.
    Moreover, for all $x\in \Omega$, there is a $1$-Lipschitz curve $\gamma:[0,T]\to \overline{\Omega}$ that attains the above infimum.
\end{lemma}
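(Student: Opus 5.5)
We prove the two inequalities in \eqref{eq:Lax-pancho} separately; the construction of an optimal descent curve handles both the reverse inequality and the attainment claim. Throughout, write $\ell:=s[f]$ and $c:=\inf_\Omega \ell>0$.

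\emph{Step 1: $f(x)\le$ RHS.} We show that for every $T>0$ and every $\gamma\in\mathrm{Lip}_1([0,T],\overline{\Omega})$ with $\gamma([0,T))\subset\Omega$,
\[
f(\gamma(0))-f(\gamma(T))\le \int_0^T \ell(\gamma(s))\,ds .
\]
The function $\varphi:=f\circ\gamma$ is continuous on $[0,T]$. For $t<T$, the definition of the slope and $|\gamma(t)-\gamma(t+h)|\le h$ give
\[
\liminf_{h\downarrow 0}\frac{\varphi(t+h)-\varphi(t)}{h}\ \ge\ -\ell(\gamma(t)).
\]
The integrand $t\mapsto \ell(\gamma(t))$ is continuous on $[0,T)$ and bounded, since it is nonnegative and the statement takes $\ell=s[f]$ real valued and continuous. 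A Dini-derivative monotonicity argument then shows that $\varphi(t)+\int_0^t\ell(\gamma)$ is nondecreasing on $[0,T)$. By continuity of $\varphi$ the inequality extends to $t=T$.

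\emph{Step 2: steepest-descent curves.} This is the main step. Fix $x\in\Omega$ and build a $1$-Lipschitz curve along which $f$ decreases at rate $\ell$, in the spirit of \cite{Drusvyatskiy2015:Curves}. We expect the technical heart of the proof to be verifying the exact rate $\ell(\gamma(t))$ for the limit curve, as opposed to a mere inequality.

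The plan is a discrete scheme. Given $\varepsilon>0$ and $y\in\Omega$, the slope definition yields points $y'$ with $d(y,y')\le \varepsilon$ and
\[
f(y)-f(y')\ \ge\ (\ell(y)-\varepsilon)\,d(y,y').
\]
Iterate this, choosing each step of length comparable to $\varepsilon$, and interpolate in arc-length. Since $X$ may not be a length space, "interpolate" means we get a $1$-Lipschitz map only on a discrete set of times.

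We then pass to the limit $\varepsilon\to0$ via Arzelà--Ascoli in the compact space $X$, working on a dense countable set of times and extending by uniform continuity. The limit curve $\gamma$ satisfies
\[
f(\gamma(0))-f(\gamma(t))\ \ge\ \int_0^t \ell(\gamma(s))\,ds .
\]
Here continuity of $\ell$ and of $f$ is what lets the inequality pass to the limit. A genuine concern is that the discrete steps might not produce a continuous limit without a length structure. To handle this, use Step 1's inequality in reverse together with boundedness of $f$: this gives total length at most $(\max f-\min f)/c$, so the curve runs for only a finite time. This bound is also what shows $\partial\Omega\ne\emptyset$. If $\Omega=X$, then $f$ would attain its minimum on the compact $X$, where necessarily $s[f]=0$, contradicting $c>0$.

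\emph{Step 3: the curve reaches the boundary.} Continue the construction (Zorn's lemma or a transfinite/maximal-time argument) as long as the curve stays in $\Omega$. Since
\[
f(\gamma(t))\ \le\ f(x)-ct
\]
and $f$ is bounded below on the compact set $\overline\Omega$, the maximal time $T$ is finite. At a maximal time the point must lie in $\partial\Omega$: otherwise it lies in $\Omega$ with $\ell>0$ there, and the descent could be extended further.

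Combining Step 2 with Step 1 gives equality in \eqref{eq:Lax-pancho}, and the infimum is attained by this curve.
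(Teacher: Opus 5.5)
\textbf{Steps 1 and 3 follow the paper's proof, but Step 2 has a genuine gap.} The paper also proves $f\le V$ via $s[f\circ\gamma]\le s[f]\circ\gamma$, and also extends the descent curve to $\partial\Omega$ by Zorn's lemma. Step 2, however, is the whole content of the lemma. You say the slope definition lets you choose descent steps ``of length comparable to $\varepsilon$''. It does not. $s[f](y)$ is a $\limsup$ as $y'\to y$, so it only supplies witnesses $y'$ with $f(y)-f(y')\ge(\ell(y)-\varepsilon)d(y,y')$ at uncontrolled, possibly arbitrarily small, distances. Nothing in it speaks about a prescribed scale, and without a length structure descent at scale $\varepsilon$ is a genuinely nonlocal fact. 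Without a lower bound on the steps, your chain may accumulate after total displacement $\tau(\varepsilon)\to 0$, and then the Arzel\`a--Ascoli limit is a constant curve.

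The remedy you propose (a bound on total length) addresses the opposite issue, namely that the horizon is finite. Continuity of the limit was never the problem: your discrete maps are $1$-Lipschitz on time sets of mesh $\le\varepsilon$, so the dense-set argument already gives a $1$-Lipschitz limit. Consequently Step 3 lacks its engine. ``The descent could be extended further'' presupposes that from every point of $\Omega$ there is a \emph{non-constant} $1$-Lipschitz curve with $f(\gamma(0))-f(\gamma(t))\ge\int_0^t\ell(\gamma(s))\,ds$, and that is exactly what is missing.

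\textbf{How the paper gets the curve, and how to repair yours.} The paper embeds $X$ isometrically in $\ell^\infty(\N)$ and passes to the compact geodesic space $\overline{\mathrm{conv}}(i(X))$. It extends $f$ by $+\infty$ and imports the curve from Theorem~\ref{thm: Drus}, i.e.\ \cite[Theorem 3.5]{Drusvyatskiy2015:Curves}.

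Your self-contained route can be repaired with Ekeland's principle, in the form of a decrease principle. If $\overline B(y,r)\subset\Omega$ and $s[f]\ge m$ on $B(y,r)$, then $\min_{\overline B(y,r)}f\le f(y)-mr$. Otherwise Ekeland on the compact ball, with some $\lambda<r$, produces an interior point $z$ with $s[f](z)<m$.

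Now fix $\overline B(x,\rho)\subset\Omega$ and $L,r<\rho/2$. Take $y_{i+1}$ a minimizer of $f$ on $\overline B(y_i,r)$, and assign it the time $(i+1)r$. That is, parametrize by the radius, not by the displacement $d(y_i,y_{i+1})$. This gives a map that is $1$-Lipschitz on $r\mathbb{Z}\cap[0,L]$, with $f(y_i)-f(y_{i+1})\ge(\ell(y_i)-\delta_r)r$, where $\delta_r\to0$ by uniform continuity of $\ell$ on $\overline B(x,\rho)$. Letting $r\to0$, dominated convergence and continuity of $f$ and $\ell$ give a $1$-Lipschitz $\gamma$ with $f(x)-f(\gamma(t))\ge\int_0^t\ell(\gamma(s))\,ds\ge ct$, so $\gamma$ is non-constant.

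Alternatively, for fixed $\varepsilon$ you can run the chain transfinitely, restarting at accumulation points, until it hits $\partial\Omega$. Its total time is then at least $d(x,\partial\Omega)>0$.

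\textbf{Two smaller points.} First, $\partial\Omega=\emptyset$ means $\Omega$ is clopen, not necessarily $\Omega=X$. Your minimum argument still works, because $\Omega$ is then compact. Second, in Step 1, $\ell\circ\gamma$ need not be bounded as $t\uparrow T$. Prove the inequality on $[0,t]$ for $t<T$ and pass to the limit using continuity of $f$ and monotone convergence.
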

In order to prove the above theorem, we will use \cite[Theorem 3.5]{Drusvyatskiy2015:Curves}. However, for the sake of clarity, instead of writing down the mentioned Theorem in its full generality, the statement below is an straightforward adaptation which is enough for our purposes. 
\begin{theorem}\label{thm: Drus}
    Let $(X,d)$ be a compact geodesic space and let $f:X\to\R\cup\{+\infty\}$ be a lower semicontinuous function. Assume that $f$ is continuous and $s[f]$ is lower semicontinuous on $\mathrm{dom}\,f$. 
    Then, for any $x\in \mathrm{dom}\,f$ with $s[f](x)>0$, there is a $1$-Lipschitz curve $\gamma:[0,L]\to X$ such that $\gamma([0,L])\subset \mathrm{dom}\,f$
    \[-(f\circ\gamma)'(t) = s[f](\gamma(t)),\quad\text{for a.e. }t\in[0,L].\]
\end{theorem}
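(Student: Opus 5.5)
The plan is to build the curve by the De Giorgi minimizing movement scheme, derive an energy inequality, and compare it with the chain rule to force equality. Fix $x\in\mathrm{dom}\,f$ with $s[f](x)>0$. For a step $\tau>0$, set $x_0^\tau=x$ and choose
\[
x_{k+1}^\tau\in\operatorname{argmin}_{y\in X}\Big\{f(y)+\tfrac{1}{2\tau}d(y,x_k^\tau)^2\Big\}.
\]
A minimizer exists because $X$ is compact and $f$ is lower semicontinuous. The values $f(x_k^\tau)$ are nonincreasing, so every iterate stays in $\mathrm{dom}\,f$ with $f(x_k^\tau)\le f(x)<+\infty$.

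\textbf{Step 1 (discrete estimate and limit curve).} I will use the standard De Giorgi variational interpolation $\tilde x^\tau(t)$ between consecutive iterates. The aim is the discrete energy estimate
\[
f(x)-f(x^\tau(t))\ \ge\ \tfrac12\int_0^t|\dot x^\tau|^2\,dr+\tfrac12\int_0^t G_\tau(r)^2\,dr ,
\]
where $G_\tau(r)=d(\tilde x^\tau(r),x_k^\tau)/(r-k\tau)$. This estimate gives a uniform $L^2$ bound on the metric derivatives, since $f$ is bounded below on compact $X$. Arzel\`a--Ascoli then yields, along a subsequence, a locally absolutely continuous limit curve $y:[0,\infty)\to X$ with $y(0)=x$.

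\textbf{Step 2 (passing to the limit).} To pass to the limit in the $G_\tau$ term, I need $s[f](\tilde x^\tau(r))\le G_\tau(r)$. This is the point where geodesicity enters. Let $z$ be the minimizer at parameter $r$ and let $w\to z$. Comparing the penalized values at $z$ and $w$, and using the triangle inequality along a geodesic from $x_k^\tau$, bounds $[f(z)-f(w)]_+/d(z,w)$ by $d(z,x_k^\tau)/(r-k\tau)$ up to vanishing terms. Lower semicontinuity of $s[f]$, together with continuity of $f$ on the domain so that $f(x^\tau(t))\to f(y(t))$, then gives
\[
f(x)-f(y(t))\ \ge\ \tfrac12\int_0^t|\dot y|^2+\tfrac12\int_0^t s[f](y)^2 .
\]

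\textbf{Step 3 (chain rule and reparametrization).} I will show that $s[f]$ is an upper gradient along absolutely continuous curves in $\mathrm{dom}\,f$, that is, $f(y(0))-f(y(t))\le\int_0^t s[f](y)|\dot y|$. Here continuity of $f$ on $\mathrm{dom}\,f$ is used to make $f\circ y$ absolutely continuous wherever $s[f]\circ y$ is integrable, and the difference quotient along $y$ is dominated by the slope. By Young's inequality the two estimates are then equalities, which forces $|\dot y|=s[f](y)$ and $-(f\circ y)'=s[f](y)^2$ almost everywhere.

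Since $s[f](x)>0$ and $s[f]$ is lower semicontinuous, $s[f]>0$ near $x$. I will restrict $y$ to a short interval $[0,t_0]$ on which it stays in that neighborhood, so that $y$ is nonconstant there. Reparametrizing by arclength gives a $1$-Lipschitz curve $\gamma:[0,L]\to\mathrm{dom}\,f$ with $|\dot\gamma|=1$, and the chain rule converts the identity into $-(f\circ\gamma)'=s[f](\gamma)$ almost everywhere.

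\textbf{Main obstacle.} I expect the hardest step to be the lower bound $s[f](\tilde x^\tau)\le G_\tau$ in Step 2, together with making $f\circ y$ absolutely continuous in Step 3 when $f$ is only lower semicontinuous globally. For the first, I would try to bound the local slope of $f$ at a minimizer of $f+\frac{1}{2\tau}d(\cdot,x_k)^2$ by the slope of the penalty term, which is $d(\cdot,x_k)/\tau$; this is where moving along geodesics toward $x_k$ makes the penalty behave like a distance in a length space. If that fails, the fallback is to invoke Theorem~3.5 of \cite{Drusvyatskiy2015:Curves} directly, checking that lower semicontinuity of $s[f]$ makes the slope coincide with its relaxation, which is the hypothesis used there.
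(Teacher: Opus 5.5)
Your route differs from the paper's. The paper does not prove this statement. It imports it from \cite[Theorem 3.5]{Drusvyatskiy2015:Curves}, a direct construction of near-steepest descent curves in a length space; this is why the proof of Lemma~\ref{lax formula} first embeds $X$ into the geodesic space $\overline{\mathrm{conv}}(i(X))$. You instead run the De Giorgi minimizing-movement scheme of Ambrosio--Gigli--Savar\'e. Steps 1 and 2 are standard and fine. The step you flag as the main obstacle is easy and uses no geodesics: if $z$ minimizes $f+\frac{1}{2r}d(\cdot,x_k^\tau)^2$, then for all $w$
\[
f(z)-f(w)\le \frac{d(w,x_k^\tau)^2-d(z,x_k^\tau)^2}{2r}\le d(z,w)\,\frac{d(w,x_k^\tau)+d(z,x_k^\tau)}{2r},
\]
so $s[f](z)\le d(z,x_k^\tau)/r$ in any metric space. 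Once completed, your argument therefore works on every compact metric space and would make the embedding unnecessary. In exchange, it needs lower semicontinuity of $s[f]$ to pass to the limit in $G_\tau$. The cited construction still yields (non-steepest) descent curves when $s[f]$ is merely bounded away from zero near $x$, as the Remark after the statement notes.

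The genuine gap is in Step 3. Continuity of $f$ on $\mathrm{dom}\,f$, integrability of $s[f]\circ y$ and an a.e.\ domination of difference quotients do not make $f\circ y$ absolutely continuous. Nor need $s[f]$ be an upper gradient, since a Cantor-type decrease is invisible to a.e.\ information. For instance, on $X=[0,1]$ let $C$ be the Cantor set and $c$ the Cantor function. Let $h=\mathrm{dist}(\cdot,C)^{-1/10}$, which is integrable and equal to $+\infty$ on $C$, and set $f(x)=-c(x)-\int_0^x h$. Then $f$ is continuous, $s[f]=h$ on $[0,1)$ and $s[f](1)=0$. So $s[f]$ is lower semicontinuous and $s[f](0)>0$. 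Yet along $v(t)=t$,
\[
f(0)-f(\tfrac12)=\tfrac12+\int_0^{1/2}s[f](v(t))\,dt .
\]
The inequality you plan to prove is thus false under the stated hypotheses, and the Young's-inequality squeeze cannot be run as written. The statement itself survives here, since $\gamma(t)=t$ satisfies the a.e.\ identity.

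There are two ways to close the gap.

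\textbf{Finite slopes.} If $s[f]$ is finite on $\mathrm{dom}\,f$ (the case the paper needs, where $s[f]=\ell$ is continuous), the missing idea is this. For a $1$-Lipschitz curve $v$ in $\mathrm{dom}\,f$, the bound $\limsup_{h\downarrow 0}\frac{f(v(t))-f(v(t+h))}{h}\le s[f](v(t))$ holds at \emph{every} $t$, not just a.e. Combined with the lower semicontinuity of $s[f]\circ v$, it shows that $t\mapsto f(v(a))-f(v(t))-\int_a^t s[f](v)-\varepsilon(t-a)$ is continuous with upper right Dini derivative at most $-\varepsilon$ everywhere. Hence this function is nonincreasing. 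Applied to the arclength reparametrization of $y$, this gives the upper-gradient inequality and then absolute continuity of $f\circ y$.

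\textbf{General case.} Without finiteness, avoid the integral chain rule.
\begin{enumerate}
\item Pass to the limit in the discrete estimate on every interval $[s,t]$; this is where continuity of $f$ on $\mathrm{dom}\,f$ is really needed. This gives $-(f\circ y)'\ge\frac12|\dot y|^2+\frac12 s[f](y)^2$ a.e.
\item The bound $-(f\circ y)'\le s[f](y)\,|\dot y|$ holds a.e.\ without any absolute continuity. Together with step 1, this gives $|\dot y|=s[f](y)$ and $-(f\circ y)'=s[f](y)^2$ a.e.
\item The inverse of $t\mapsto\int_0^t|\dot y|$ is absolutely continuous because $|\dot y|>0$ a.e.\ near $0$. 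This lets you transfer the a.e.\ identity to the arclength parametrization.
\end{enumerate}
In that generality only the a.e.\ identity, not its integrated form, can be expected.
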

\begin{remark}
    In the context of Theorem~\ref{thm: Drus}, if $\overline{x}\in \mathrm{dom}\,f$ is such that $s[f](\overline{x})>0$, then we deduce that there is a non-constant $1$-Lipschitz curve $\gamma:[0,L]\to \mathrm{dom}\, f$ such that $\gamma(0)=x$. In fact, this same conclusion can be derived from \cite[Theorem 3.5]{Drusvyatskiy2015:Curves} using only as hypothesis that the slope $s[f]$ is bounded away from zero near $x$, and dropping the requirement of lower semicontinuity. In this case, the curve won't necessarily verify the steepest descent relation $-(f\circ\gamma)'(t) = s[f](\gamma(t))$, but it will only be a realiable descent curve \cite[Definition 2.18]{Drusvyatskiy2015:Curves}.  
    We would like to point out that the construction of such a curve relays only on the compactness of the subjacent space and the definition of the local slope, as we will see in the forthcoming proof.

\end{remark}
\begin{proof}[Proof of Lemma~\ref{lax formula}]
First, we show that $\partial \Omega \neq\emptyset$. Reasoning towards a contradiction, assume that $\Omega$ is a clopen subset of $X$. 
Therefore, $\Omega$ is compact and $f$ admits a local minimum $\overline{x}\in \Omega$. So, $s[f](\overline{x})=0$, which is a contradiction.\\

Set $V:\Omega\to \R$ the function defined by the right hand side of \eqref{eq:Lax-pancho}. Recall that we use the convention $\inf \emptyset =+\infty$. \\

$f\leq V$: Let $x\in \Omega$ and assume that $V(x)<+\infty$. 
Therefore, there are curves $\gamma\in \mathrm{Lip}_1([0,T],X)$ such that $\gamma(0)=x$, $\gamma(T)\in \partial \Omega$ and $\gamma([0,T))\subset\Omega$. 
Let $\gamma\in \mathrm{Lip}_1([0,T],X)$ be such a curve.
It easily follows that $s[f\circ \gamma](t)\leq s[f](\gamma(t))= \ell(\gamma(t))$ for all $t\in[0,T)$.
Therefore
\begin{align*}
    f(x)-f(\gamma(T))\leq \int_0^T \ell(\gamma(t))ds.
\end{align*}
Since $\gamma$ is arbitrary, we deduce that $f(x)\leq V(x)$.\\

$f\geq V$: The idea is to use Theorem~\ref{thm: Drus}. Consider the Fr\'echet-Kuratowski isometric embedding (see e.g. \cite{burago2001course}) $i:X\to \ell^\infty(\N)$ into the Banach space of bounded sequences. 
Set $\mathcal{X}:=\overline{\mathrm{conv}}(i(X))$ and $\rho$ the distance on $\mathcal{X}$ inherited from $\ell^\infty(\N)$. 
Thus, $(\mathcal{X},\rho)$ is a compact geodesic space that contains an isometric copy of $X$. 
Define $F:\mathcal{X}\to\R\cup\{+\infty\}$ by $F\equiv f$ on $i(X)$ and $F\equiv +\infty$ on $\mathcal{X}\setminus i(X)$.
It directly follows that $F$ is lower semicontinuous, $F$ is continuous on $\mathrm{dom}(F)= i(X)$ and $s[F](i(x))=s[f](x)$ for all $x\in X$.\\

Let $\overline{x}\in \Omega$. Then we can use Theorem~\ref{thm: Drus} on $F$ at $\overline{x}$ to get a curve $\nu\in \mathrm{Lip}_1([0,L],\mathcal{X})$ such that $\nu(0)=\overline{x}$, $\nu([0,L])\subset \mathrm{dom}(F)=i(X)$ and 
\[
-(F\circ\gamma)'(t) = s[F](\gamma(t)),\quad\text{for a.e. }t\in[0,L].
\]
So, we deduce that
\[F(\nu(s))-F(\nu(t))=\int_s^ts[F](\nu(r))dt,\quad \text{for all }0\leq s<t\leq L.\]
Consider $\gamma:=i^{-1}\circ \nu$, we obtain that $\gamma$ is $1$-Lipschitz, $\gamma(0)=\overline{x}$, $\gamma([0,L])\subset X$ and 
\[f(\gamma(s))-f(\gamma(t))=\int_s^ts[f](\gamma(r))dt,\quad \text{for all }0\leq s<t\leq L.\]
If $\gamma([0,L])\not\subset \Omega$, then choose $T:=\inf\{t>0:~\gamma(t)\in \partial\Omega\}$. 
If $\gamma([0,L])\subset \Omega$, then we can repeat the above argument, this time starting from $\gamma(L)$, to extend the domain of definition of $\gamma$.
Indeed, since $\inf_{\Omega} s[f]>0$, a standard procedure invoking Zorn's Lemma implies the existence of a $1$-Lipschitz curve $\gamma:[0,T]\to X$, with $T<+\infty$ such that $\gamma(0)=\overline{x}$, $\gamma([0,T))\subset \Omega)$, $\gamma(T)\in \partial \Omega$ and 
\[f(\gamma(s))-f(\gamma(t))=\int_s^ts[f](\gamma(r))dt,\quad \text{for all }0\leq s<t\leq T.\]
In particular, we have that
\[V(\overline{x})\leq f(\gamma(T))+\int_0^T s[f](\gamma(r))dt= f(\overline{x}).\]
Therefore, $V(\overline{x})=f(\overline{x})$  and the infimum that defines $V(\overline{x})$ is a minimum.
\end{proof}
We have the following direct consequences.
\begin{corollary}\label{cor: rec-connected}
    Let $(X,d)$ be a compact eikonal space. Then $X$ is rectifiable connected. 
\end{corollary}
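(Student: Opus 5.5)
Given two distinct points $x,y\in X$, I will produce a slope eikonal equation covered by the definition of eikonal space, take its solution, and apply Lemma~\ref{lax formula} to get a $1$-Lipschitz curve from $x$ to the boundary of the domain. Concretely, let $K:=\{y\}$ and $\Omega:=X\setminus\{y\}$. If $x=y$ there is nothing to prove, so I assume $X$ has at least two points. If $y$ is isolated, $\Omega$ is clopen; then the second paragraph of the proof of Lemma~\ref{lax formula} applies. Namely, as soon as the equation has a continuous solution $u$ on $\overline{\Omega}=\Omega$, $u$ attains a minimum on the compact set $\Omega$, where its slope vanishes. This contradicts $s[u]\equiv 1$. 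I will therefore handle this case at the end.

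\textbf{Main case.} Suppose $\Omega$ is not closed, so $\partial\Omega=\{y\}$. I take $\ell\equiv 1$ on $\Omega$ and $g(y)=0$. Condition \eqref{CC} only involves pairs of boundary points, here just $(y,y)$, for which it reads $0\le \inf\{\cdots\}$; this holds trivially, with the value $+\infty$ if no admissible curve exists. Since $X$ is eikonal, there is a continuous $u:\overline{\Omega}=X\to\R$ with $s[u]\equiv1$ on $\Omega$ and $u(y)=0$. Then $s[u]$ is continuous on $\Omega$ and $\inf_\Omega s[u]=1>0$, so Lemma~\ref{lax formula} applies. For each $x\in\Omega$ it gives a $1$-Lipschitz curve $\gamma:[0,T]\to X$ with $\gamma(0)=x$, $\gamma(T)\in\partial\Omega=\{y\}$, and $T=u(x)-u(y)<\infty$. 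This curve is rectifiable with length at most $T$, so $d_I(x,y)<\infty$. As $x,y$ were arbitrary, $X$ is rectifiably connected, and in fact $d_I(x,y)\le u(x)$.

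\textbf{Isolated points.} The remaining issue is the case where $y$ is an isolated point. By the argument above, the equation with $K=\{y\}$ would then have no solution. This contradicts eikonality as long as $\Omega=X\setminus\{y\}$ is a nonempty proper open set, which holds when $X$ has at least two points. Hence a compact eikonal space with at least two points has no isolated points, and the main case covers every pair. One subtlety needs checking here. When $\partial\Omega=\emptyset$, the definition still demands a solution whenever $g:\emptyset\to\R$ and \eqref{CC} hold, and both hold vacuously. So the definition genuinely requires a solution, and the contradiction is legitimate.

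The step needing the most care is this degenerate boundary situation, together with making sure \eqref{CC} is satisfied vacuously or trivially. The analytic content is entirely supplied by Lemma~\ref{lax formula}.
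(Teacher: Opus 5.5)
Your proof is correct and follows the paper's argument. You first exclude isolated points: a solution on the compact clopen set $X\setminus\{y\}$ would attain a minimum where its slope vanishes, and you note that \eqref{CC} holds vacuously there, a detail the paper leaves implicit. You then apply Lemma~\ref{lax formula} to the solution with $\ell\equiv 1$, $g(y)=0$ to obtain a $1$-Lipschitz curve of length at most $u(x)$ from each $x$ to $y$.
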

\begin{proof}
    Assume that $X$ contains at least two points.
    Let $\overline{x}\in X$ and set $\Omega= X\setminus \{\overline{x}\}$. 
    Let us show first that $\overline{x}$ is not isolated. 
    Reasoning by absurd, assume that $\Omega= X\setminus \{\overline{x}\}$ is clopen (thus compact). 
    Since $X$ is an eikonal space, there is a continuous function $u:\Omega\to \R$ such that
    \[s[u](x)=1,\quad\text{for all }x\in \Omega.\]
    Since $\Omega$ is compact, $u$ attains its minimum on $\Omega$. Therefore, there is a point $\overline{z}\in \Omega$ such that $s[u](\overline{z})=0$, a contradiction.
    So, we have that $\partial \Omega= \{\overline{x}\}$. 
    Again since $X$ is an eikonal space, there is a continuous function $u:X\to\R$ such that
    \[\begin{cases}
        s[u](x)=1,\quad &\text{for all }x\in\Omega,\\
        u(\overline{x})=0.\quad&
    \end{cases}\]
    Thus, thanks to Lemma~\ref{lax formula}, for every $x\in X\setminus\{\bar{x}\}$ there is a $1$-Lipschitz curve connecting $x$ with $\overline{x}$. 
\end{proof}

\begin{corollary}[Uniqueness of continuous pointwise solutions]\label{cor: uniqueness}
Equation~\eqref{eq: eikonal complete} admits at most one continuous pointwise solution provided $\ell\in \mathcal{C}(\Omega)$, $\inf\,\ell>0$ and $g\in \mathcal{C}(\partial \Omega)$. Moreover, the continuous pointwise solution $u:\overline{\Omega}\to\R$ is defined by
\begin{equation}\label{eq:OptimalControl-formula} 
     u(x)=\inf\left\{ \int_0^T \ell(\gamma(s))ds+g(\gamma(T)):~ \begin{array}{l}T>0,~\gamma\in \mathrm{Lip}_1([0,T],X),\\
     ~\gamma(0)=x\text{ and }\gamma(T)\in \partial \Omega \end{array}\right\}.
    \end{equation}
\end{corollary}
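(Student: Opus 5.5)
Both claims will come from Lemma~\ref{lax formula}, applied to the function $f:=u$ with $\ell=s[u]$.

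Let $u:\overline{\Omega}\to\R$ be any continuous pointwise solution of \eqref{eq: eikonal complete}. The plan has three steps.

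First, check that Lemma~\ref{lax formula} applies. The function $u$ is continuous on $\overline{\Omega}$, and $s[u]=\ell$ on $\Omega$. So $s[u]$ is continuous on $\Omega$ with $\inf_\Omega s[u]=\inf\ell>0$. Note that the compactness of $X$ is needed here and is implicitly in force, since Lemma~\ref{lax formula} is stated for compact spaces.

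Second, apply Lemma~\ref{lax formula}. For every $x\in\Omega$ it gives
\[
u(x)=\inf\left\{\int_0^T\ell(\gamma(s))\,ds+g(\gamma(T))\right\},
\]
where the infimum runs over $1$-Lipschitz curves $\gamma:[0,T]\to\overline{\Omega}$ with $\gamma(0)=x$, $\gamma(T)\in\partial\Omega$ and $\gamma([0,T))\subset\Omega$. Here we used $u=g$ on $\partial\Omega$ to replace $u(\gamma(T))$ by $g(\gamma(T))$. The right-hand side depends only on $(\ell,g,\Omega)$. Hence any two continuous solutions agree on $\Omega$, and they agree on $\partial\Omega$ because both equal $g$ there. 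This gives uniqueness.

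Third, reconcile this class of curves with the larger class in \eqref{eq:OptimalControl-formula}. There the curves are $\gamma\in\mathrm{Lip}_1([0,T],X)$ with only the endpoint conditions. Call the resulting infimum $W(x)$; since the class is larger, $W\le u$.
\begin{itemize}
\item For the reverse inequality, take any admissible $\gamma$ in the larger class and let $\tau:=\inf\{t:\gamma(t)\in\partial\Omega\}$.
\item Since $\gamma(0)=x\in\Omega$ and $\gamma$ is continuous, a path leaving $\Omega$ must meet $\partial\Omega$. Thus $\tau\le T$, $\gamma([0,\tau))\subset\Omega$ and $\gamma(\tau)\in\partial\Omega$.
\item The truncation $\gamma|_{[0,\tau]}$ belongs to the restricted class. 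Its cost is $\int_0^\tau\ell(\gamma)+g(\gamma(\tau))$, which is at least $u(x)$.
\end{itemize}

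The obstacle in the third step is that the tail of $\gamma$ after $\tau$ may leave $\overline{\Omega}$, where $\ell$ is not defined. So the comparison with $\int_0^T\ell(\gamma)+g(\gamma(T))$ is not immediate. I would handle this as follows.
\begin{itemize}
\item Interpret $\ell$ along the curve only where it is defined, i.e.\ read the formula as ranging over curves in $\overline{\Omega}$. Then the tail, if nonempty, consists of consecutive excursions between boundary points through $\Omega$.
\item Handle those excursions with the $f\le V$ argument from the proof of Lemma~\ref{lax formula}: on each subarc inside $\Omega$ we have $s[u\circ\gamma]\le\ell\circ\gamma$, so the drop of $u$ along that subarc is at most the integral of $\ell$ over it. Summing over the countably many subintervals of $\{t:\gamma(t)\in\Omega\}$ gives $u(\gamma(\tau))-g(\gamma(T))\le\int_\tau^T\ell(\gamma)$.
\item The times at which $\gamma$ sits on $\partial\Omega$ cause trouble, because there $u=g$ is not controlled by $\ell$. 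These contributions are excluded by the convention that $\gamma((0,T))\subset\Omega$, as in \eqref{CC}.
\end{itemize}
With the curves restricted in this way, $W\ge u$, so the two formulas coincide. The equality on $\partial\Omega$ is immediate (take $T\to0$, or use $u=g$ directly).
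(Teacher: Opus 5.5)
Your Steps 1--2 are exactly the paper's proof: apply Lemma~\ref{lax formula} to $f=u$ (so $\ell=s[u]$), use $u=g$ on $\partial\Omega$ to replace $u(\gamma(T))$ by $g(\gamma(T))$, and note that the resulting expression depends only on $(\Omega,\ell,g)$. Step 3 is unnecessary, because \eqref{eq:OptimalControl-formula} is meant with the admissible class of Lemma~\ref{lax formula} (this is how the paper uses it later, e.g.\ in Lemma~\ref{lemma:ContinuityV}); moreover your excursion-summing bullet is false, since boundary times can carry a drop of $g$ even on a null set: take $X=[0,1]$, $\Omega=[0,1]\setminus C$ with $C$ the Cantor set, $\ell\equiv 1$ and $g=-c|_C$ with $c$ the Cantor function, and the curve $t\mapsto \tfrac{1}{2}+t$ on $[0,\tfrac{1}{2}]$ has cost $-\tfrac{1}{2}<u(\tfrac{1}{2})=-\tfrac{1}{3}$, so restricting to $\gamma([0,T))\subset\Omega$ is not a reconciliation of two formulas but the only correct reading of the statement.
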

\begin{proof}
Let $(X,d)$ be a compact metric space and $\Omega\subset X$ be a nonempty open set.
    Let $\ell\in \mathcal{C}(\Omega)$, $\inf\,\ell>0$ and $g\in \mathcal{C}(\partial \Omega)$ be such that Equation~\eqref{eq: eikonal complete} admits at least a continuous solution.
    The representation formula provided by Lemma~\ref{lax formula} yields the uniqueness of solution.
\end{proof}
For the next corollary we recall that a metric space is called proper if every bounded and closed set is compact. 
\begin{corollary}
    Let $(X,d)$ be a proper metric space and let $f:X\to\R$ be a continuous function. 
    Assume that $s[f]:X\to \R$ is continuous. 
    Then, for any $x\in X$ such that $s[f](x)\neq 0$, there is a curve $\gamma\in \mathrm{Lip}_1([0,T],X)$, with $T>0$, such that  $\gamma(0)=x$ and
    \[
    f( \gamma(t))= f(x)- \int_0^t s[f](\gamma(s))ds,\quad\text{for all }t\in [0,T].
    \]  
\end{corollary}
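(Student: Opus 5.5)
We localize to a compact ball around $x$ and then repeat the argument of Lemma~\ref{lax formula} inside that ball. Fix $x\in X$ with $s[f](x)\neq 0$, so $s[f](x)>0$. Since $s[f]$ is continuous, there is $r>0$ such that $s[f](y)\ge s[f](x)/2>0$ for all $y$ in the closed ball $\overline{B}(x,r)$. Because $X$ is proper, $K:=\overline{B}(x,r)$ is compact. We work in the compact metric space $(K,d)$ and take as open set $\Omega:=B(x,r/2)$, open in $K$, with $f$ restricted to $\overline{\Omega}\subset K$.

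The first point to check is that the slope computed in $K$ agrees with the slope computed in $X$ on $\Omega$. This holds because the slope is a local quantity: every point $y\in\Omega$ has a full $X$-neighborhood $B(y,r/2)$ contained in $K$. Hence $s[f|_{\overline\Omega}]=s[f]$ is continuous on $\Omega$ and $\inf_\Omega s[f]\ge s[f](x)/2>0$.

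We then run the second half of the proof of Lemma~\ref{lax formula} in $K$. Embed $K$ isometrically into $\ell^\infty(\N)$ and pass to the compact geodesic space $\overline{\mathrm{conv}}(i(K))$. Extend $f$ by $+\infty$ off $i(K)$, so that it is lower semicontinuous, continuous on its domain, and has lower semicontinuous slope at the relevant points; its slope equals $s[f]$ at points of $\Omega$. Then apply Theorem~\ref{thm: Drus} at $x$. This yields $L>0$ and a $1$-Lipschitz curve $\gamma:[0,L]\to K$ with $\gamma(0)=x$ and $-(f\circ\gamma)'(t)=s[f](\gamma(t))$ for a.e. $t$.

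Finally, choose $T\in(0,\min\{L,r/2\})$. Since $\gamma$ is $1$-Lipschitz, $\gamma([0,T])\subset \Omega$. The curve is $1$-Lipschitz and $f$ is continuous, and on $\Omega$ the function $f$ is locally Lipschitz because its slope is locally bounded (this is the point to verify carefully). Hence $f\circ\gamma$ is absolutely continuous, and integrating the a.e. identity gives $f(\gamma(t))=f(x)-\int_0^t s[f](\gamma(s))\,ds$ for all $t\in[0,T]$.

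The main obstacle is precisely the hypotheses of Theorem~\ref{thm: Drus}. We need lower semicontinuity of the slope on the whole domain of the extended function, including the boundary points of $K$, where the slope seen in $K$ may differ from the slope in $X$. Our first attempt is to restrict the domain to $\overline{B}(x,r/2)$ and exploit that the curve only matters before it leaves $\Omega$, with the slope at boundary points playing no role before exit time. Failing that, we would replace the steepest-descent statement by the reliable-descent version mentioned in the Remark, which only requires the slope to be bounded away from zero near $x$. We would then upgrade to equality using continuity of $s[f]$ along the curve, as in the proof of Lemma~\ref{lax formula}.
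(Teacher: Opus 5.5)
Your localization strategy is the intended one: the paper gives no separate proof and lists this corollary as a direct consequence of Lemma~\ref{lax formula}. As written, though, your proof has a gap exactly at the step you flag.

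\textbf{The lower semicontinuity hypothesis.} Theorem~\ref{thm: Drus}, as stated, is global. It needs $s[F]$ to be lower semicontinuous on all of $\mathrm{dom}\,F=i(K)$. On the sphere $\{d(\cdot,x)=r\}$ the slope relative to $K$ can be strictly smaller than $s[f]$, and nothing in the hypotheses makes it lower semicontinuous there. Neither of your workarounds closes this.
\begin{itemize}
\item Restricting to $\overline{B}(x,r/2)$ only creates a new boundary sphere with the same defect.
\item ``The slope at boundary points plays no role before exit time'' is a statement about the inner workings of the theorem. You cannot use it while invoking the theorem as a black box.
\item The reliable-descent fallback needs an extra property of the curves of \cite{Drusvyatskiy2015:Curves}, namely that their descent rate dominates the limiting slope. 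That property is not stated in the paper, and there is no such upgrade step in the proof of Lemma~\ref{lax formula}.
\end{itemize}

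\textbf{The fix.} Apply Lemma~\ref{lax formula} itself rather than re-running its proof, since its hypotheses only concern $\Omega$. (The paper's proof of the lemma glosses over the same boundary point, but the lemma as stated is what you are entitled to use.)
\begin{itemize}
\item Take the compact space $K=\overline{B}(x,r)$ and $\Omega=B(x,r)$. On $\Omega$ the slope of $f|_{\overline\Omega}$ equals $s[f]$ by locality, is continuous, and is at least $s[f](x)/2$.
\item The lemma gives an optimal $1$-Lipschitz $\gamma:[0,T]\to\overline\Omega$ with $\gamma(0)=x$, $\gamma([0,T))\subset\Omega$ and $\gamma(T)\in\partial\Omega$. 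In particular $T>0$.
\item Apply the ``$f\le V$'' inequality from that proof to the sub-arcs over $[0,t]$ and $[t,T]$. This gives $f(x)-f(\gamma(t))\le\int_0^t s[f](\gamma(\tau))\,d\tau$ and $f(\gamma(t))-f(\gamma(T))\le\int_t^T s[f](\gamma(\tau))\,d\tau$.
\item These two inequalities add up to the optimality equality, so both are equalities for every $t\in[0,T]$. This is the claimed formula.
\end{itemize}

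\textbf{The absolute continuity step.} Your justification here is wrong: outside length spaces, locally bounded slope does not make $f$ locally Lipschitz. Take the proper space $[0,1]\times\R$ with distance $\sqrt{|a-b|}+|s-t|$ and $f(a,s)=a^{1/4}+s$. Then $s[f]\equiv 1$, yet $f(a,0)-f(0,0)=a^{1/4}$ while the distance is only $a^{1/2}$.

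What you actually need is weaker and true. For $1$-Lipschitz $\gamma$ one has $s[f\circ\gamma](t)\le s[f](\gamma(t))\le M$. A continuous function on an interval whose descent slope is bounded by $M$ is $M$-Lipschitz: look at a maximizer of $\tau\mapsto f(\gamma(\tau))\mp M'\tau$ with $M'>M$. Hence $f\circ\gamma$ is absolutely continuous. With the fix above, this step is not needed at all.
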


We end this section by showing that Lemma~\ref{lax formula} does not hold in the noncompact setting.
  
\begin{example}
     Consider the Banach space $c_0(\N)$ of real null sequences endowed with its canonical norm, $\|x\|_\infty:=\sup_{n\in\N} |x_n|$. 
Denote by $\{e_n\}_n\subset c_0 (\N)$ its canonical basis, i.e. the vectors such that $e_{n,m}=1$ if $n=m$ and $e_{n,m}=0$ otherwise. Let $n\geq 2$ and $\Gamma_n\subset c_0(\N)$ be the set defined by
    \[\Gamma_n:=\left\{t(e_1+e_n):t\in\left[0,\frac{1}{2}\right)\right \}\cup \left\{te_1+(1-t)e_n:t\in\left[\frac{1}{2},1-\frac{1}{n}\right)\right \}.\]

\begin{figure}[H]\label{fig_3Drasca}
    \centering
    \caption{Representation of $\bigcup_{i=1}^4\Gamma_i$.} 
\tdplotsetmaincoords{50}{120}

\begin{tikzpicture}[tdplot_main_coords, scale=5]

\coordinate (O) at (0,0,0);

\draw[thick] (0,0,0) -- (0,0.5,0.5)--(0, 0.5+0.8, 0.5 -0.8*0.5);
\fill (0,0.5+0.8,0.5 -0.8*0.5) circle (0.4pt);
\draw[gray,dotted,thick] (0,0.5+0.8,0.5 -0.8*0.5) -- (0,1.5,0);

\draw[thick] (0,0,0) --(0.3,0.5,0) -- (0.15,1,0);
\draw[gray,dotted,thick] (0.15,1,0) -- (0,1.5,0);
\fill (0.15,1,0) circle (0.4pt);

\draw[thick] (0,0,0) --(-0.3,0.4,-0.65);
\fill (-0.3,0.4,-0.65) circle (0.4pt);
\draw[gray,dotted,thick] (-0.3,0.4,-0.65) -- (0,1.5,0);

\draw[thick] (0,0,0) --(-0.7,0.6,0) -- (-0.7+0.9*0.7, 0.6 + 0.9*0.9,0);
\draw[gray,dotted,thick] (-0.7+0.9*0.7, 0.6 + 0.9*0.9,0) -- (0,1.5,0);
\fill (-0.7+0.9*0.7,0.6 + 0.9*0.9,0) circle (0.4pt);

\draw[dashed,blue,thick] (0,0,0)--(0,1+0.5,0);
\fill (O) circle (0.6pt) node[below left]{\scriptsize $0$};
\fill (0,1+0.5,0) circle (0.6pt) node[right]{\scriptsize $e_1$};
\end{tikzpicture}
\end{figure}
    
    And consider the set
    \[X:=\bigcup_{n\geq 2}\Gamma_n \cup \{e_1\}.\]
    Endow $X$ with the metric $d$ inherited by $c_0(\N)$, i.e. $d(x,y)=\|x-y\|_\infty$ for all $x,y\in X$. Then, $(X,d)$ admits a pointwise solution of an eikonal equation that is not representable by the formula given in Lemma~\ref{lax formula}.
    Indeed, consider the function $u:X\to\R$ defined by $u(x)=d(0,x)$.
    It easily follows that
    \[\begin{cases}
        s[u](x)=1,\quad& \text{for all }x\in \Omega:= X\setminus\{0\}.\\
        u(0)=0,\quad&\partial\Omega:=\{0\}.
    \end{cases}\]
    Indeed, $\Gamma_n$ are geodesics in $c_0(\N)$  joining $0$ with $(1-\frac{1}{n})e_1 + \frac{1}{n} e_n$ and
    \[1=\mathrm{Lip}(u)\geq s[u](e_1)\geq \lim_{n\to\infty}\dfrac{u(e_1)-u((1-n^{-1})e_1+n^{-1}e_n)}{d(e_1,(1-n^{-1})e_1+n^{-1}e_n)}=1.\]
    However, since there is no curve connecting $0$ and $e_1$, the value of $u(e_1)$ cannot be obtained by the formula~\eqref{eq:Lax-pancho} given in Lemma~\ref{lax formula}.
    \hfill$\Diamond$
\end{example}


\section{Examples and counterexamples}\label{sec:examples}
This section is devoted to presenting examples of compact eikonal spaces, as well as examples where the property fails. To gain further insight on this concept, we explore notions that one might expect to be related to the solvability of slope eikonal equations, which will help us narrow down the search for a metric characterization of eikonal spaces.\\

We take as a starting point the well established fact that the slope eikonal equation~\eqref{eq: eikonal complete} admits a continuous solution whenever the space $X$ is a length space. When studying equation~\eqref{eq: eikonal complete} on a rectifiably connected space $(X,d)$, one might be tempted to replace the metric $d$ with its intrinsic version $d_I$, which automatically turns $(X,d_I)$ into a length space, warrantying the existence of solutions to any version of equation~\eqref{eq: eikonal complete}. This approach has a clear downside, as some eikonal equations may not admit a solution on $(X,d)$ at all. This change of metric fundamentally alters the behavior of the eikonal equation, even when the identity mapping $\mathrm{Id}:(X,d)\to (X,d_I)$ is bi-Lipschitz. This can be seen on forthcoming Example~\ref{pato_simple}.\\ 

Before we continue, let us recall that for $C\geq 1$, a metric space $(X,d)$ is said to be a $C$-quasiconvex space if $d_I\leq Cd$. A space $(X,d)$ is said to be quasiconvex if there exists $C\geq 1$ such that it is $C$-quasiconvex. Note that a $X$ is quasiconvex if and only if $(X,d)$ and $(X,d_I)$ are bi-Lipschitz equivalent.\\

Our first example shows that being a length space is not a necessary condition for a space to be eikonal.
\begin{example}\label{circle}
    Consider the unit circle $\mathbb S^1\subset \R^2$ endowed with the distance induced by the canonical Euclidean structure of $\R^2$. 
    The intrinsic distance between any pair of points $x$ and $y$ in $\mathbb S^1$ coincides with the angle $\theta$ between them, while the Euclidean distance is $d(x,y)=2\sin\left(\tfrac{\theta}{2}\right)$. Using this, it can be shown that $\mathbb S^1$ is a $\tfrac{\pi}{2}$-quasiconvex space. Moreover, it can be shown that the identity mapping $\mathrm{Id}:(\mathbb S^1,d)\to (\mathbb S^1,d_I)$ and its inverse $\mathrm{Id}^{-1}$ have pointwise Lipschitz constant equal to $1$ at every point in $\mathbb S^1$. This is known in the literature as a \emph{pointwise isometry} (see \cite{DJ} for details). 
    It is not hard to show that any metric space that is pointwise isometric to itself endowed with its intrinsic distance is eikonal as well. Thus, $(\mathbb S^1,d)$ is an example of a quasiconvex and compact eikonal space that fails to be length, showing that the latter is not a necessary condition for the solvability of slope eikonal equations.
    \hfill$\Diamond$
\end{example}
Although the property of being pointwise isometric to a length space implies being an eikonal space, it is not necessary, as the following example shows.

\begin{example}\label{araña_no_QC}
    The following metric space is constructed as a subset of the Banach space $\ell^1(\N)$ of summable sequences. Let $(e_n)_n\subset\ell^1(\N)$ be its canonical basis. 
    For two vectors $x,y\in \ell^1(\N)$ we denote by $[x,y]$ the line segment joining $x$ and $y$, that is, $[x,y]:=\{\lambda x+(1-\lambda )y:~\lambda\in[0,1]\}$.
    For every $n\in \N$, set
    \[P_n:=\left[0,\dfrac{e_{2n}}{2n^2}+\dfrac{e_{2n+1}}{n}\right]\cup\left[\dfrac{e_{2n}}{2n^2}+\dfrac{e_{2n+1}}{n},\dfrac{e_{2n}}{n^2}\right] \subset \ell^1(\N).\]
    Consider $X:=\bigcup_{n\in \N} P_n$ and endow $X$ with the metric $d$ inherited by $\ell^1(\N)$. It follows readily that $(X,d)$ is compact.\\

\begin{figure}[H]\label{fig_3Daraña}
    \centering
    \caption{Representation of $P_1\cup P_2\cup P_3$.} 
\tdplotsetmaincoords{62}{120}

\begin{tikzpicture}[tdplot_main_coords, scale=4]

\coordinate (O) at (0,0,0);
\fill (O) circle (0.6pt);

\coordinate (A) at (0,0,0);
\coordinate (B) at (0.8,0,1);
\coordinate (C) at (1.6,0,0);

\draw[thick] (A) -- (B) -- (C);
\draw[dashed,blue] (0.8,0,0) -- (B) node[midway,right]{$e_3$};

\draw[dashed,red,-stealth] (A) -- (2,0,0) node[below]{$e_2$};

\coordinate (B) at (0.5,0.5,0.6);
\coordinate (C) at (1,1,0);

\draw[thick] (A) -- (B) -- (C);
\draw[dashed,red,-stealth] (A) -- (1.3,1.3,0) node[below]{$e_4$};
\draw[dashed,blue] (0.5,0.5,0) -- (B) node[midway, left]{$e_5$};

\coordinate (B) at (-0.15,-0.05,0.3);
\coordinate (C) at (-0.3,-0.1,0);

\draw[thick] (A) -- (B) -- (C);
\draw[dashed,red,-stealth] (A) -- (-1.2,-0.4,0) node[above]{$e_6$};
\draw[dashed,blue] (-0.15,-0.05,0) -- (B);
\end{tikzpicture}

\end{figure}
It is clear that $X$ is rectifiably connected and that $d_I(0,e_{2n}/n^2)= n^{-2}+2n^{-1}= (1+2n)d(0,e_{2n}/n^2)$ for every $n\in\N$. Therefore, $X$ is not quasiconvex (and in particular, not pointwise isometric to a length space).\\ 

We now prove that $X$ is an eikonal space using Theorem~\ref{thm: main}. Let $\Omega\subsetneq X$ be a nonempty open subset, and consider $\ell\equiv 1$. We must show that the function $u$ defined in Corollary~\ref{cor: uniqueness}, with $g=0$ on $\partial \Omega$, is a solution of
    \[\begin{cases}
        s[u](x)=1,&\quad \text{in }\Omega\\
        u(x)=0,&\quad \text{in }\partial\Omega
    \end{cases}.\]

    If $0\notin \Omega$, the equation is solved by studying the eikonal equation on each $P_n\cap \overline{\Omega}$ independently. On the other hand, if $0\in \Omega$, take $r>0$ such that $B(0,r)\subset \Omega$. So, there must exist $N\in \N$ such that $P_n\subset \Omega$ for all $n\geq N$. Hence, $\partial\Omega$ intersects only finitely many sets from the family $\{P_n\}_n$. In this case we solve first the equation restricted to $\bigcup_{k=1}^N P_k$, and then extend $u$ to each $P_n$ with $n\geq N$ simply as $u(x) = u(0) + d_I(x,0)$ for each $x\in P_n$. In both cases, the continuity of $u$ and the computation of $s[u]$ are direct.\hfill $\Diamond$
\end{example}
With this example, we have shown that quasiconvexity is not a necessary condition for the eikonal property. 
Our next example, which is also constructed inside the space $\ell^1(\N)$, is in fact quasiconvex, but nevertheless fails the eikonal property at a prescribed point, in the sense that equation~\eqref{eq: eikonal complete} has no solution whenever the prescribed point is in $\Omega$. This shows that quasiconvexity is not a sufficient condition either. We present the construction in Example~\ref{pato_simple}, and prove our statement in Proposition~\ref{prop:Pato-simple-properties}.

\begin{example}\label{pato_simple}
   Consider the canonical basis $\{e_n\}_n\subset \ell^1(\N)$. For $n\in \N$, consider the sets $S_n\subset \ell^1(\N)$ given by
\[S_n:=\left\{te_1+\alpha(t)e_{n+1}:~t\in[0,1]\right\},\]
where $\alpha(t):=t$ if $t\in \left[0,\frac{1}{2}\right]$ and $\alpha(t)=1-t$ if $t\in \left[\frac{1}{2},1\right]$.
Set $x_n:= (\frac{1}{2}-\frac{1}{2^{n}})e_1$ for all $n\in\N$ and $x_\infty:= \tfrac{1}{2}e_1$.
Define 
\[X:=\bigcup_{n\geq 1}\left\{\dfrac{1}{2^{n+1}}S_n+x_{n}\right\}\cup\{x_\infty\}.\]

\begin{figure}[H]\label{fig_3Dpato}
    \centering
    \caption{Representation of $\displaystyle\bigcup_{i=1}^{4}\left\{\dfrac{1}{2^{i+1}}S_i+x_{i}\right\}\cup\{x_\infty\}.$}

\pgfplotsset{compat=1.18}

\begin{tikzpicture}
  \begin{axis}[
    view={120}{30}, 
    axis lines=middle,
    xlabel={$e_2$},
    ylabel={$e_1$},
    zlabel={$e_3$},
    xmin=0, xmax=0.14,   
    ymin=0, ymax=0.7,
    zmin=0, zmax=0.07,
    xtick=\empty, ytick=\empty, ztick=\empty
  ]

    \addplot3[thick,blue] coordinates {
      (0,0,0) (0.125,0.125,0) (0,0.25,0)
    };
    \addplot3[only marks,mark=*] coordinates {(0,0,0)} node[anchor=north east]{$x_1$};
    \addplot3[only marks,mark=*] coordinates {(0,0.25,0)} node[anchor=north]{$x_2$};

    \addplot3[thick,red] coordinates {
      (0,0.25,0) (0,0.3125,0.04625) (0,0.375,0)
    };
    \addplot3[only marks,mark=*] coordinates {(0,0.375,0)} node[anchor=north east]{$x_3$};

    \addplot3[thick,green!70!black] coordinates {
      (0,0.375,0) (-0.02,0.45,0.02) (0,0.4375,0)
    };
    \addplot3[only marks,mark=*] coordinates {(0,0.4375,0)} node[anchor=north]{$x_4$};
    \addplot3[only marks,mark=*] coordinates {(0,0.5,0)}
      node[anchor=north west]{$x_\infty$};

      \draw[->, dashed] (0,0,0) -- (-0.065,0,0.02) node[below right] {$e_4$};
   
  \end{axis}
\end{tikzpicture}

\end{figure}

Endow $X$ with the metric $d$ inherited by $\ell^1(\N)$.
If follows that $X$ is a compact subset of $\ell^1(\N)$. Moreover, $X$ can be seen as the image of a rectifiable curve. Indeed, if $\gamma:[0,1]\to X$ is a continuous bijection, then
\(\mathrm{len}(\gamma):=\sum_{n=1}^\infty \dfrac{1}{2^{n+1}}2=1.\)
\hfill$\Diamond$
\end{example}
\begin{proposition}\label{prop:Pato-simple-properties}
    The space $(X,d)$ of Example~\ref{pato_simple} is a compact and $2$-quasiconvex metric space that fails to be eikonal.
\end{proposition}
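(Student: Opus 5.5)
The plan is to check compactness and $2$-quasiconvexity directly from the explicit parametrization of $X$. Failure of the eikonal property will come from the single equation with $\Omega=X\setminus\{x_1\}$, $\ell\equiv 1$, $g\equiv 0$: I will show that its only possible solution has slope $2$ at $x_\infty$.

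\emph{Compactness.} Each piece $T_n:=\tfrac{1}{2^{n+1}}S_n+x_n$ is a compact polygonal arc in $\ell^1(\N)$ from $x_n$ to $x_{n+1}$. Its diameter is at most $2^{-n}$, and its distance to $x_\infty$ is at most $\|x_n-x_\infty\|_1+2^{-n}\to 0$. So any sequence in $X$ either has infinitely many terms in one $T_n$, or accumulates at $x_\infty$. Either way it has a convergent subsequence in $X$, and $X$ is compact.

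\emph{Quasiconvexity.} I parametrize $X$ by its $e_1$-coordinate $\tau\in[0,\tfrac12]$. Write $p(\tau)$ for the unique point of $X$ with first coordinate $\tau$. On $T_n$, the only other nonzero coordinate is $e_{n+2}$, and it is a tent function of $\tau$ with slope $\pm1$. Hence on each piece
\[
\|p(\tau)-p(\tau')\|_1\le 2|\tau-\tau'|,
\]
so the curve $\tau\mapsto p(\tau)$ is $2$-Lipschitz. Its restriction to $[\tau,\tau']$ is a curve in $X$ joining $p(\tau)$ and $p(\tau')$, which gives $d_I(p(\tau),p(\tau'))\le 2|\tau-\tau'|$; in fact equality holds, since the length is additive over pieces and each piece has length exactly $2\Delta\tau$. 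On the other hand, the first coordinate alone gives $d(p(\tau),p(\tau'))\ge|\tau-\tau'|$. Together, $d_I\le 2d$.

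\emph{Failure of the eikonal property.} Take $\Omega:=X\setminus\{x_1\}$, which is open and proper, with $\partial\Omega=\{x_1\}$. Take $\ell\equiv 1$ and $g\equiv 0$. Condition \eqref{CC} holds trivially because there is only one boundary point. Suppose a continuous pointwise solution $u$ exists. By Corollary~\ref{cor: uniqueness}, $u$ is given by \eqref{eq:OptimalControl-formula}, i.e.
\[
u=d_I(\cdot,x_1),\qquad\text{so } u(p(\tau))=2\tau .
\]
In particular $u(x_\infty)=1$ is the maximum of $u$, and $x_\infty\in\Omega$. Now compute $s[u](x_\infty)$. For every $y=p(\tau)$ we have
\[
\frac{u(x_\infty)-u(y)}{d(x_\infty,y)}\le\frac{2(\tfrac12-\tau)}{\tfrac12-\tau}=2 .
\]
Along the junction points $y=x_n$, which lie on the $e_1$-axis, $d(x_\infty,x_n)=2^{-n}$ while $u(x_\infty)-u(x_n)=2\cdot 2^{-n}$, so the ratio is exactly $2$. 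Since $x_n\to x_\infty$, this gives $s[u](x_\infty)=2\neq 1=\ell(x_\infty)$, a contradiction. Hence no solution exists and $X$ is not eikonal.

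The same computation works for any $\Omega$ containing $x_\infty$. This is the \emph{prescribed point} phenomenon mentioned before the statement: the junction points approach $x_\infty$ along the straight $e_1$-axis, whereas the intrinsic distance is doubled by the tents.

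\emph{Main obstacle.} The only delicate point is the identification $u=d_I(\cdot,x_1)=2\tau$. This requires that $1$-Lipschitz curves from $p(\tau)$ to $x_1$ have length at least $2\tau$. That holds because $X$ is an arc: any such curve must traverse the whole subarc from $p(\tau)$ to $x_1$, and that subarc has length exactly $2\tau$ by additivity of length over the pieces. I would state this explicitly before invoking Corollary~\ref{cor: uniqueness}.
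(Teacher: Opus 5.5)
Your proof is correct, but it takes a different route from the paper's in two places.

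\emph{Quasiconvexity.} The paper treats pairs on the same tent as immediate. For pairs on different tents, it chains through the junctions $x_{m+1},\dots,x_n$ and uses that in $\ell^1(\N)$ one has $d(y,z)=d(y,x_{m+1})+d(x_{m+1},x_n)+d(x_n,z)$, because the tents point in distinct coordinate directions. Your parametrization by the $e_1$-coordinate replaces this case analysis with two one-line estimates: $p$ is $2$-Lipschitz, and $d\geq|\tau-\tau'|$. It never uses the orthogonality of the tents, so your slip of writing $e_{n+2}$ for $e_{n+1}$ is harmless. It also gives $d_I(p(\tau),p(\tau'))=2|\tau-\tau'|$ exactly. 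You additionally supply the compactness argument, which the paper only asserts.

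\emph{Failure of the eikonal property.} The paper takes an arbitrary proper open $\Omega\ni x_\infty$ and arbitrary admissible $(\ell,g)$. It applies Lemma~\ref{lax formula} along the arc-length parametrization to write $u(x_\infty)-u(x_n)=\int_{t_n}^1\ell(\gamma(t))\,dt$ with $t_n=1-2^{1-n}$. Continuity of $\ell$ at $x_\infty$ then gives $s[u](x_\infty)\geq 2\ell(x_\infty)$. You instead test a single equation ($\Omega=X\setminus\{x_1\}$, $\ell\equiv 1$, $g\equiv 0$), identify its only candidate explicitly as $d_I(\cdot,x_1)=2\tau$, and read off $s[u](x_\infty)=2$.

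\emph{What each route buys.} Your version suffices for the statement and is fully explicit. It also shows that the space fails even the singleton-$K$ case of the hypothesis of Theorem~\ref{thm: main}, which is the condition in Question~1. The paper's version gives the stronger ``prescribed point'' conclusion: every admissible equation whose domain contains $x_\infty$ fails. That is exactly what Proposition~\ref{hyperpato_antiEikonal} reuses at the dense set of leaves. Your closing remark about arbitrary $\Omega\ni x_\infty$ would, for non-constant $\ell$, need precisely the paper's continuity-of-$\ell$ step rather than the explicit formula $u=2\tau$.
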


\begin{proof}
     We start with quasiconvexity. Let $y,z\in X$. Assume that $y,z\in \frac{1}{2^{n+1}}S_n+x_n$ for some $n\in \N$. Then, it readily follows that $d_I(y,z)\leq 2 d(y,z)$. On the other hand, if $y\in S_m$ and $z\in S_n$, with $m<n$, we have
    \begin{align*}
      d_I(y,z)&=d_I(y, x_{m+1})+d_I(x_n,z)+\sum_{k=m+1}^{n-1}\underbrace{ d_I(x_k,x_{k+1})}_{2d(x_k,x_{k+1})} \\
      &\leq 2d(y, x_{m+1})+2d(x_n,z)+2d(x_{m+1},x_n)= 2d(y,z),
    \end{align*}
    where in the last equality we have used that the ambient space is $\ell^1(\N)$.
    \\
    
    We proceed to show that $X$ is not eikonal. Let $\Omega\subset X$ be any proper open set containing $x_{\infty}$, $\ell:\Omega\to(0,+\infty)$ continuous with $\inf \ell >0$, $g\in C(\partial\Omega)$ such that $(\ell,g)$ satisfy~\eqref{CC}, and consider the slope eikonal equation 
    \begin{equation}\label{eq.chopan}
        \begin{cases}
            s[u](x)=\ell(x),\quad\text{for all }x\in\Omega,\\
            u(x)=g(x), \quad\text{for all }x\in\partial\Omega.
        \end{cases}
    \end{equation}
    
    Assume that \eqref{eq.chopan} admits continuous solution $u$. Let $\gamma:[0,1]\to X$ be an arc-length parametrization (therefore $1$-Lipschitz) of $X$ such that $\gamma(0)=0$ and $\gamma(1)=x_\infty$. Then, thanks to Lemma~\ref{lax formula}, 
    we have that
    \[u(x_{\infty})-u(x_n)=\int_{t_{n}}^{1} \ell(\gamma(t))dt,
    \]
    where $\gamma(t_n)=x_n$. Choose $\varepsilon >0$. The continuity of $\ell\circ\gamma$ entails that $\ell(\gamma(t))\geq \ell(x_{\infty})-\varepsilon$ for all $t$ close enough to $1$.
    Since for every $n\in\N$ we have that $t_n:=1-\frac{1}{2^{n-1}}$, we estimate
    \[s[u](x_\infty)\geq \limsup_{n\to\infty}\dfrac{u(x_\infty)-u(x_n)}{d(x_\infty,x_n)}\geq(\ell(x_{\infty})-\varepsilon)\limsup_{n\to\infty}\frac{\displaystyle1-\left(1-\frac{1}{2^{n-1}}\right)}{\frac{1}{2^n}}=2(\ell(x_{\infty})-\varepsilon).\]
    We conclude that, $s[u](x_\infty)\geq 2\ell(x_{\infty})$ and so $u$ is not a pointwise solution of equation~\eqref{eq.chopan}.
\end{proof}
In summary, Example~\ref{circle}, Example~\ref{araña_no_QC} an Example~\ref{pato_simple} together imply that quasiconvexity (which is a bi-Lipschitz equivalence between $d$ and $d_I$) and the eikonal property are independent of each other.\\   

It is clear that the pathological behavior of Example~\ref{pato_simple} occurs solely around the point $x_\infty$. Our next example reproduces this behavior in a \emph{dense set}, so that no slope eikonal equation may admit a pointwise solution, while maintaining compactness and quasiconvexity of the space. Two technical lemmas are required for this construction. 

\begin{lemma}\label{pegado-quasiconvex}
Let $(X,d)$ be a $C$-quasiconvex complete metric space. For each $n\in\N$, let $r_n>0$ and $d_n$ be a metric on $[0,r_n]$, such that $([0,r_n],d_n)$ is $C$-quasiconvex. 
Let $Q:=\{q_n:~n\in\N\}\subseteq X$ be a countable set and define 
$$ Z=\left\{ (x,y)\in X\times [0,1]~:~\left\{\begin{array}{ll}
 y=0 &\text{ if } x\notin Q \\
  y\in [0,r_n] &\text{ if } x\in Q 
\end{array} \right.\right\},$$
\noindent endowed with the metric

\[\rho((x_1,y_1),(x_2,y_2))=\left\{\begin{array}{ll}
 d_n(y_1,y_2) &\text{ if } x_1=x_2=q_n,\\
 d_n(y_1,0)+d(x_1,x_2)+d_m(0,y_2) &\text{ if }x_1=q_n,~x_2=q_m,\text{ with }n\neq m,\\
 d_n(y_1,0)+d(x_1,x_2) &\text{ if } x_1=q_n~\text{ and }x_2\notin Q,\\
 d(x_1,x_2) &\text{ if }x_1,x_2\notin Q.
 
\end{array} \right.\]
Then, $(Z,\rho)$ is also $C-$quasiconvex and complete.
\end{lemma}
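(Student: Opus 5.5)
The plan is to verify three things in turn: that $\rho$ is a metric, that $(Z,\rho)$ is complete, and that $(Z,\rho)$ is $C$-quasiconvex. Throughout, two maps do most of the work. The first is the projection $p(x,y)=x$. The second is the ``height'' $h(x,y)$, defined as $d_n(y,0)$ if $x=q_n$ and $0$ otherwise. Geometrically, $Z$ is $X$ with an interval $([0,r_n],d_n)$ glued at its endpoint $0$ to the point $q_n$, and $\rho$ is the natural gluing (path) metric. The whole proof reduces to two observations:
\begin{itemize}
\item restricted to the base $X\times\{0\}$, the metric $\rho$ equals $d$, and restricted to the fiber $\{q_n\}\times[0,r_n]$ it equals $d_n$, so both embed isometrically;
\item every path between different fibers, or between a fiber and a base point, must pass through the base points $(q_n,0)$.
\end{itemize}

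\textbf{Step 1: $\rho$ is a metric.} Symmetry and definiteness are immediate. For the triangle inequality I split according to which of the three points lie in fibers, and reduce each case to the triangle inequalities of $d$ and $d_n$. The key identity is that, for $z$ in the fiber over $q_n$ and any $w$ not in that fiber,
\[
\rho(z,w)=h(z)+\rho\bigl((q_n,0),w\bigr).
\]
From the explicit formula I also record two inequalities that are used below:
\begin{itemize}
\item $d(p(z),p(w))\le \rho(z,w)$ for all $z,w$;
\item $\rho(z,w)\ge h(z)+h(w)$ whenever $p(z)\ne p(w)$.
\end{itemize}

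\textbf{Step 2: completeness.} Let $(z_k)$ be $\rho$-Cauchy.
\begin{itemize}
\item By the first inequality, $(p(z_k))$ is $d$-Cauchy, so $p(z_k)\to x$ in $X$.
\item If $p(z_k)$ is eventually constant, equal to $q_n$, the tail lies in the fiber $([0,r_n],d_n)$ and converges there. This uses that this fiber is complete. That is implicit in the hypotheses (these are metrics on a compact interval compatible with its topology), and I would state it explicitly.
\item Otherwise, for $k\ne j$ with $p(z_k)\ne p(z_j)$ we have $h(z_k)+h(z_j)\le\rho(z_k,z_j)$. Since $p(z_k)$ changes infinitely often, this forces $h(z_k)\to 0$. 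Then
\[
\rho\bigl(z_k,(x,0)\bigr)\le h(z_k)+d(p(z_k),x)\to 0 .
\]
\end{itemize}

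\textbf{Step 3: $C$-quasiconvexity.} Fix $z_1=(x_1,y_1)$ and $z_2=(x_2,y_2)$.
\begin{itemize}
\item If both lie in the same fiber over $q_n$, pick a curve in $([0,r_n],d_n)$ of length at most $C\,d_n(y_1,y_2)+\varepsilon$. Since the fiber embeds isometrically, this is a curve in $Z$ of the same length.
\item Otherwise, concatenate three pieces:
\begin{enumerate}
\item a curve in the fiber of $z_1$ from $z_1$ down to $(x_1,0)$, of length at most $C\,h(z_1)+\varepsilon$ (empty if $x_1\notin Q$);
\item a curve in $X$ from $x_1$ to $x_2$, of length at most $C\,d(x_1,x_2)+\varepsilon$, transported to the base of $Z$;
\item a fiber curve from $(x_2,0)$ up to $z_2$, of length at most $C\,h(z_2)+\varepsilon$.
\end{enumerate}
Each piece is continuous in $Z$ and has the same length there, because the base and the fibers embed isometrically. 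The concatenation is continuous because consecutive pieces meet at the glue points $(q_n,0)$. Its total length is at most $C\bigl(h(z_1)+d(x_1,x_2)+h(z_2)\bigr)+3\varepsilon=C\rho(z_1,z_2)+3\varepsilon$.
\end{itemize}
Letting $\varepsilon\to0$ gives $\rho_I\le C\rho$.

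\textbf{Main obstacle.} The only delicate point is the bookkeeping in Steps 1 and 2. One must check that the case formula for $\rho$ is consistent at the points $(q_n,0)$, where the ``fiber'' and ``base'' formulas overlap. One must also check that fiber heights genuinely vanish when base points vary in the Cauchy argument. Step 3 is straightforward once the isometric embeddings are noted.
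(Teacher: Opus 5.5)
Your proposal is correct, and its core (Step 3) is exactly the paper's argument: concatenate a fiber curve from $(x_1,y_1)$ down to $(x_1,0)$, a base curve to $(x_2,0)$, and a fiber curve up to $(x_2,y_2)$, using that the base and the fibers embed isometrically. Your Steps 1--2 supply the metric and completeness checks that the paper calls ``direct'', and you rightly flag that completeness of $Z$ needs each $([0,r_n],d_n)$ to be complete; the statement leaves this implicit, but it holds in the paper's application, where the fibers are compact.
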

\begin{proof}
    Direct, for every pair of points $(x_1,y_1),(x_2,y_2)\in Z$ with $x_1\neq x_2$, we can consider the curve $\gamma$ that connects $(x_1,y_1)$ with $(x_1,0)$, then $(x_1,0)$ with $(x_2,0)$, and finally $(x_2,0)$ with $(x_2,y_2)$. 
\end{proof}
The following lemma is a straightforward consequence of the Arzelà-Ascoli theorem. For a metric space $(X,d)$ and two subset $A,B\subset X$, we denote the Hausdorff-Pompeiu distance from $A$ to $B$ by
\[d_H(A,B)=\max \left\{\sup_{x\in A}\,d(x,B),\sup_{y\in B}\,d(y,A)\right\},\]
where $d(\cdot,C):= \inf_{z\in C} d(\cdot,z)$ for any $C\subset X$.
\begin{lemma}\label{limit-of-quasiconvex}
Let $(X,d)$ be a compact metric space and let $\{X_k\}_k$ be a sequence of subsets of $X$. Denote by $d_k$ the restriction of $d$ to $X_k\times X_k$. Assume that there is a $C>0$ such that
\begin{itemize}
    \item $(X_k,d_k)$ is $C$-quasiconvex for every $k\in\N$, and
    \item $\lim_{k\to \infty}d_H(X_k,X)=0$,
\end{itemize}
Then, $(X,d)$ is also $C-$quasiconvex.
\end{lemma}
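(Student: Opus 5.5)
Fix $x,y\in X$ and aim to show $d_I(x,y)\le C\,d(x,y)$, where $d_I$ is the intrinsic distance of $(X,d)$. The idea is to approximate $x,y$ by points of $X_k$, join these by nearly optimal curves inside $X_k$, and pass to a limit of curves with Arzel\`a--Ascoli in the compact space $X$. The limit curve lies in $X$ and is long at most $C\,d(x,y)$.

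Let $\varepsilon_k:=d_H(X_k,X)\to 0$ (reading $X_k\subset X$ with the standing assumption that $\overline{X_k}$ approaches $X$; note that only $\sup_{z\in X}d(z,X_k)\to0$ matters). Pick $x_k,y_k\in X_k$ with $d(x,x_k),d(y,y_k)\le 2\varepsilon_k$. By $C$-quasiconvexity of $(X_k,d_k)$ there is a curve in $X_k$ from $x_k$ to $y_k$ of length at most $C d(x_k,y_k)+1/k\le C d(x,y)+4C\varepsilon_k+1/k=:L_k$. Reparametrize it by constant speed on $[0,1]$, so that $\gamma_k:[0,1]\to X_k\subset X$ is $L_k$-Lipschitz with $\gamma_k(0)=x_k$ and $\gamma_k(1)=y_k$. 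Since $L_k\to C d(x,y)$, the family is uniformly Lipschitz, hence equicontinuous, and takes values in the compact set $X$. Arzel\`a--Ascoli then gives a subsequence converging uniformly to a curve $\gamma:[0,1]\to X$. This $\gamma$ is $(\limsup L_k)$-Lipschitz, i.e. $C d(x,y)$-Lipschitz, with $\gamma(0)=x$ and $\gamma(1)=y$.

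Finally, $\mathrm{len}(\gamma)\le \mathrm{Lip}(\gamma)\cdot 1\le C d(x,y)$. Hence $d_I(x,y)\le C d(x,y)$, and $X$ is $C$-quasiconvex. One could equally invoke lower semicontinuity of length under uniform convergence.

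The only delicate point is the role of the Hausdorff hypothesis. We need the endpoints $x,y\in X$ to be approximable by points of $X_k$, which is exactly what the condition $\sup_{z\in X} d(z,X_k)\to 0$ provides. We also need the limit curve to land in $X$; this is automatic because each $X_k\subset X$ and $X$ is closed, being compact. Everything else is routine.
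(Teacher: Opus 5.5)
Your proof is correct and follows essentially the paper's route: approximate $x,y$ by points of $X_k$, join them inside $X_k$ by curves of length at most about $C\,d(x_k,y_k)$, and extract a limit curve in the compact space $X$ via Arzel\`a--Ascoli. Your use of nearly optimal curves rescaled to $[0,1]$ is slightly more careful than the paper, which tacitly treats $X_k$ as compact to get minimizing curves on varying intervals $[0,T_k]$; the only nit is that you should choose $x_k,y_k$ within $\varepsilon_k+1/k$ rather than $2\varepsilon_k$, since $\varepsilon_k$ could be $0$ while $x\notin X_k$.
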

\begin{proof}
    Indeed, let $x,y\in X$ and $\{x_k\}_k,\{y_k\}_k\subset X$ be sequences, convergent to $x$ and $y$ respectively, such that $x_k,y_k\in X_k$ for all $k\in \N$.
    Since $(X_k,d_k)$ is a compact $C$-quasiconvex space, there is a $1$-Lipschitz curve $\gamma_k:[0,T_k]\to X_k$ such that
    \[d_{k,I}(x_k,y_k)=T_k=\mathrm{len}(\gamma_k)\leq C d_k(x_k,y_k),\]
    \noindent where $d_{k,I}$ is the intrinsic metric associated to $d_k$. Since $d_k=d$ on $X_k$, we have 
    \begin{equation}
    \label{eq: venegas}
    d_{I}(x_k,y_k)\leq T_k\leq C d(x_k,y_k).    
    \end{equation}
    Let $\{k(l)\}_l$ be a subsequence such that $\{T_{k(l)}\}_l$ converges to some $T\in\R$. Then, thanks to Arzelà-Ascoli theorem and up to a further subsequence, $\{\gamma_{k(l)}\}_l$  uniformly converges to a $1$-Lipschitz curve ${\gamma:[0,T]\to X}$ such that $\gamma(0)=x$ and $\gamma(T)=y$. Therefore, combining with~\eqref{eq: venegas} we get
    \[d_{I}(x,y)\leq \mathrm{len}(\gamma)\leq T\leq  \limsup_{l\to\infty} Cd (x_{k(l)},y_{k(l)})= C d (x,y).\]
    Since $x,y\in X$ are arbitrary, we get that $(X,d)$ is $C$-quasiconvex.
\end{proof}

\begin{example}\label{hyperpato}
Let us now consider $(X,\theta)$ be the metric space constructed in Example~\ref{pato_simple}. 
Let $\gamma:[0,1]\to X$ be the arc-length parametrization of $X$ such that $\gamma(0)=0$ and $\gamma(1)=x_\infty=\tfrac{1}{2}e_1$. For each $r>0$ we consider the distance $\theta_r$ on $[0,r]$ defined by
\[
\theta_r(s,t)= r\theta(\gamma(r^{-1}s),\gamma(r^{-1}t)),\quad \text{for all }s,t\in[0,r].
\]

Observe that $([0,r],\theta_r)$ is $2$-quasiconvex, that $\mathrm{diam}([0,r]) = r$, and that for all $n\in\N$ we have that
\begin{equation}\label{eq:Pato-effect}
\theta_{r,I}\left(r,r\left(1-\dfrac{1}{2^{n-1}}\right)\right)= r\theta_I(x_\infty,x_{n})=2r\theta(x_\infty,x_{n})=2\theta_r\left(r,r\left(1-\dfrac{1}{2^{n-1}}\right)\right).
\end{equation}
Let $d=\theta_1$ and denote $P_1:=[0,1]$ and $\rho_1:=d$. 
Set $L_1:=\{1\}$, which will represent the set of \textit{leaves} of $P_1$. 
Let $k\in\N$ and assume that $(P_k,\rho_k)$ and $L_k$ is already defined, and that $P_k\subset[0,1]^k$. 
Then, we define $(P_{k+1},\rho_{k+1})$ as the space constructed by applying Lemma~\ref{pegado-quasiconvex} with $(X,d)=(P_k,\rho_k)$,  $r_n = 2^{-(n+k)}$ and $d_n = \theta_{r_n}$, that is,
\[
d_n(s,t) := r_n d(r_n^{-1}s, r_n^{-1}t)=\dfrac{1}{2^{n+k}}d(2^{n+k}s,2^{n+k}t).
\]
We select $Q = Q_k\subset P_k\setminus L_k$ as a countable dense subset verifying that $\pi_k$ is one-to-one on $Q_k$ and $0\notin \pi_k(Q_k)$, where $\pi_k(x)$ stands for the $k$th coordinate of an element $x\in P_k\subset [0,1]^k$. Define now the set $L_{k+1}\subset P_{k+1}$ of leaves of $P_{k+1}$ by
\[L_{k+1}:=L_k\times \{0\}\cup \{(q_n,r_n)\,:\, q_n \in Q_k\},\]
and denote by $A_k:=\pi_k(Q_k)$ as the set of countable values for the $k$-th coordinate of the elements of $Q_k$.
\begin{figure}[H]
\label{peine}
    \centering
\caption{Graphic representation of $P_3$. The end of lines are the leaves. The segments are drawn proportionally to their diameter. The construction only asks for density of new segments, without controlling their position. The diameter of new segments decays quickly to zero.
\vspace{1em}}
\begin{tikzpicture}[scale=7]

\draw[thick] (0,0) -- (1,0);
\node[below] at (0,0) {$0$};
\node[below] at (1,0) {$1$};

\draw[thick] (0.1,0)--(0.1,0.05);

\draw[thick] (0.25,0)--(0.25,0.5);
\draw[thick] (0.4,0)--(0.4,-0.2);

\draw[thick] (0.5,0)--(0.5,0.1);
\draw[thick] (0.75,0)--(0.75,-0.4);

\draw[thick] (0.9,0)--(0.9,0.25);

\draw[thick] (0.1,0.025)--(0.125,0.025);

\draw[thick] (0.25,0.4)--(0.2,0.4);
\draw[thick] (0.25,0.3)--(0.45,0.3);
\draw[thick] (0.25,0.1)--(0.275,0.1);
\draw[thick] (0.25,0.2)--(0.15,0.2);

\draw[thick] (0.4,-0.15)--(0.5,-0.15);
\draw[thick] (0.35,-0.07)--(0.4,-0.07);
\draw[thick] (0.38,-0.03)--(0.4,-0.03);

\draw[thick] (0.5,0.07)--(0.55,0.07);

\draw[thick] (0.75,-0.35)--(0.78,-0.35);
\draw[thick] (0.75,-0.25)--(0.85,-0.25);
\draw[thick] (0.75,-0.15)--(0.8,-0.15);
\draw[thick] (0.6,-0.05)--(0.75,-0.05);

\draw[thick] (0.9,0.15)--(0.85,0.15);
\draw[thick] (0.9,0.11)--(0.88,0.11);
\draw[thick] (0.9,0.07)--(0.83,0.07);
\draw[thick] (0.9,0.04)--(0.885,0.04);
\end{tikzpicture}

\end{figure}

Define, for $k_1<k_2$, the injection $i_{k_1,k_2}:P_{k_1}\to P_{k_2}$ by
\[i_{k_1,k_2}(x):=x\times\{0\}^{k_2-k_1},\quad \text{for all }x\in P_{k_1}.\]
It readily follows that $i_{k_1,k_2}$ is an isometry.
Also, for each $k\in\N$, define $i_k:P_k\to [0,1]^\N$ by
\[i_k(x):= x\times\{0\}^{\N}=(x_1,...,x_k,0,...),\quad \text{for all }x\in P_k.\]
Note that, for any $k_1<k_2$, we have $i_{k_1}= i_{k_2}\circ i_{k_1,k_2}$.
Also, define $\mathcal{P}_\infty\subset[0,1]^{\N}$ by 
\[\mathcal{P}_\infty:=\bigcup_{k\in \N}i_k(P_k)\]
and the metric $\rho_\infty $ on $\mathcal{P}_\infty\subset[0,1]^{\N}$ by
\[
\rho_\infty(x,y):=\rho_k(i_k^{-1}(x),i_k^{-1}(y)),\quad \text{where }k\in\N~~ \text{and }~x,y\in i_k(P_k),
\]
which is well defined (and a metric) due to the fact that the maps $\{i_{k_1,k_2}\}_{k_1<k_2}$ are isometries. Note that the union defining $\mathcal{P}_\infty$ is over an increasing family of sets. Finally, denote by $(P_\infty,\rho_\infty)$ the completion of $(\mathcal{P}_\infty,\rho_\infty)$ and extend $\{i_k\}$ to have  range in $P_\infty$. \hfill$\Diamond$
\end{example}

Note that this construction has the following invariant: For each $k$, and each ${x\in \mathcal{P}_{\infty}\setminus i_{k}(P_{k})}$, one has that $\pi_{k}(x)$ belongs to the countable set of possible values, given by $A_{k}$. Here $\pi_{k}:\mathcal{P}_{\infty}\to [0,1]$ is the natural extension of $\pi_{k}$ that assigns the $k$-th coordinate to elements of $\mathcal{P}_{\infty}\subset [0,1]^{\N}$. This invariant can be extended to the whole space $(P_{\infty},\rho_{\infty}$) as shown in the following lemma.

\begin{lemma}\label{Lemma:TheOneshouldNotBeNamed}
    The function $\pi_k:\mathcal{P}_\infty\to [0,1]$ is continuous and it can be continuously extended to $P_\infty$. 
    Moreover, for every $\overline{x}\in P_\infty\setminus i_k(P_k)$ we have that $\pi_k(\overline{x})\in A_k:= \pi_k(Q_k)$.
\end{lemma}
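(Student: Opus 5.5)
The plan is to first prove that $\pi_k$ is Lipschitz on $\mathcal{P}_\infty$ with respect to $\rho_\infty$. That gives the continuous extension to the completion $P_\infty$ for free, and the invariant then follows by passing to limits inside a closed set.

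\emph{Lipschitz bound.} I would show by induction on $m\ge k$ that
\[
|\pi_k(x)-\pi_k(y)|\le C\,\rho_m(x,y)\qquad\text{for } x,y\in P_m .
\]
Here $\pi_k$ on $P_m$ is the $k$-th coordinate, and $C$ is a constant such that $|s-t|\le C\,\theta_r(s,t)$ on $[0,r]$. Such a $C$ exists: $\gamma$ is an arc-length parametrization, so $|s-t|=r\,\theta_I(\gamma(s/r),\gamma(t/r))$, and this is at most $2\theta_r(s,t)$ by $2$-quasiconvexity. So $C=2$ works.

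For the base case $m=k$, the $k$-th coordinate on $P_k$ is the coordinate of the last glued segments, with $d_n=\theta_{r_n}$ (or $d=\theta_1$ when $k=1$). Inspecting the formula for $\rho$ in Lemma~\ref{pegado-quasiconvex}, the difference of the $k$-th coordinates of two points in different segments is bounded by $y_1+y_2$. These are the distances to the base points in the respective segments, which are in turn bounded by $C\,d_n(y_1,0)+C\,d_m(0,y_2)\le C\rho$. The base part contributes nothing, since there the $k$-th coordinate is $0$.

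For the inductive step $m\to m+1$, take two points of $P_{m+1}$ lying over base points $x_1,x_2\in P_m$. Their $k$-th coordinates equal those of $x_1,x_2$, because gluing only changes the $(m+1)$-th coordinate. Since $\rho_{m+1}\ge \rho_m(x_1,x_2)$ in every case of the definition of $\rho$, the inductive hypothesis gives the bound. The isometries $i_{k_1,k_2}$ make all of this compatible, so $\pi_k$ is $C$-Lipschitz on $\mathcal{P}_\infty$ and extends uniquely to $P_\infty$.

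\emph{Invariant.} Let $\overline{x}\in P_\infty\setminus i_k(P_k)$. Since $i_k(P_k)$ is compact, hence closed, there is a sequence $x_j\in\mathcal{P}_\infty$ with $x_j\to\overline{x}$, and we may take $x_j\notin i_k(P_k)$. By the invariant on $\mathcal{P}_\infty$, $\pi_k(x_j)\in A_k$. The main obstacle is that $A_k$ is countable and dense-ish, hence not closed, so limits need not stay in $A_k$.

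To handle this, I would use a retraction. Let $R:\mathcal{P}_\infty\to i_{k+1}(P_{k+1})$ send each point to its ancestor at level $k+1$, namely the point of $i_{k+1}(P_{k+1})$ at which its branch was attached. I would show $R$ is $1$-Lipschitz by the same case analysis of $\rho$: the path between two points passes through their ancestors. So $R$ extends to $P_\infty$, and $\pi_k=\pi_k\circ R$. Then $R(\overline{x})$ lies in the compact set $P_{k+1}$, and it is not in $i_k(P_k)$. Otherwise $\overline{x}$ would lie at distance $\ge \delta>0$ from $i_k(P_k)$ while its ancestor lies in it; I would check that this is impossible by showing $\rho_\infty(z,i_k(P_k))=\rho_\infty(R(z),i_k(P_k))+\rho_\infty(z,R(z))$ and passing to the limit.

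Hence $R(\overline{x})$ is a point of $P_{k+1}\setminus P_k\times\{0\}$, that is, a point $(q_n,y)$ with $y>0$. Therefore $\pi_k(\overline{x})=\pi_k(q_n)\in A_k$. If the retraction argument fails to exclude $R(\overline{x})\in i_k(P_k)$, I would instead argue directly from the distance decomposition above: if $\overline{x}\notin i_k(P_k)$, then its approximants eventually share a fixed level-$(k+1)$ segment. This uses that distances between different segments are at least the sum of the heights.
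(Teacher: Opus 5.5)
Your Lipschitz part is correct and essentially matches the paper. The paper uses $\pi_k=\pi_k\circ\proj_k$ together with $\rho_\infty(x,y)\geq\rho_k(\proj_k x,\proj_k y)$, which is your level-by-level inequality $\rho_{m+1}\geq\rho_m(x_1,x_2)$. You also make the constant $2$ explicit via $\theta_I(\gamma(a),\gamma(b))=|a-b|$ and $2$-quasiconvexity.

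The gap is in the retraction route for the invariant, at its only nontrivial step: showing $R(\overline{x})\notin i_k(P_k)$. The identity $\rho_\infty(z,i_k(P_k))=\rho_\infty(R(z),i_k(P_k))+\rho_\infty(z,R(z))$ is true and survives the limit. But if $R(\overline{x})\in i_k(P_k)$, it simply reads $\rho_\infty(\overline{x},i_k(P_k))=\rho_\infty(\overline{x},R(\overline{x}))$. That is perfectly consistent, so passing to the limit yields no contradiction. What must actually be ruled out is that branches glued at levels $\geq k+2$, lying at height at least $\delta/2$ above their attachment points $R(x_j)\in Q_{k+1}$, accumulate at a point of $i_k(P_k)$. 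This needs a non-accumulation property of the branches, and the identity does not provide one. In the paper that property is that branches hanging from distinct points of $Q_k$ have diameters tending to $0$, since the diameters of all segments glued after step $k$ sum to at most $2^{-k}$.

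Your fallback supplies exactly this missing ingredient and makes $R$ unnecessary. For $x,y\in\mathcal{P}_\infty\setminus i_k(P_k)$ with $\proj_k x\neq\proj_k y$, unwinding the gluing formula gives
\[
\rho_\infty(x,y)=\rho_\infty(x,i_k(P_k))+\rho_k(\proj_k x,\proj_k y)+\rho_\infty(y,i_k(P_k)).
\]
Set $\delta:=\rho_\infty(\overline{x},i_k(P_k))>0$ and take $x_j\to\overline{x}$. For large $j,j'$ both heights exceed $\delta/2$ while $\rho_\infty(x_j,x_{j'})<\delta$. Hence $\proj_k x_j$ is eventually a fixed $q\in Q_k$, so $\pi_k(x_j)=\pi_k(q)$ eventually and $\pi_k(\overline{x})=\pi_k(q)\in A_k$.

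The paper instead extracts a subsequence with distinct $\proj_k(x_n)$ and uses summability of the glued diameters to force $\rho_\infty(\overline{x},i_k(P_k))=0$. Your separation argument is shorter and uses only the tree structure of the gluing metric, not the specific radii $r_n=2^{-(n+k)}$.

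Make the separation argument the main proof and drop the retraction. If you want to keep $R$, prove $R(\overline{x})\notin i_k(P_k)$ by running the same separation estimate at level $k+1$ when $\overline{x}\notin i_{k+1}(P_{k+1})$; if $\overline{x}\in i_{k+1}(P_{k+1})$, then $R(\overline{x})=\overline{x}$ and there is nothing to prove.
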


\begin{proof}
    Note that by construction $\pi_k$ is uniformly continuous when restricted to $i_k(P_k)$. Observe also that for every $x\in \mathcal{P}_{\infty}\setminus i_k(P_k)$, then $\pi_k(x) = \pi_k(q)$, where $q$ is the nearest point of $i_k(P_k)$ to $x$, which coincides with the unique element $q\in Q_k$ from which the branch containing $x$ emanates in the iterative construction. Then, denoting $\proj_k:\mathcal{P}_{\infty} \to i_k(P_k)$ as the metric projection onto $i_k(P_k)$, we deduce that $\pi_k = \pi_k \circ \proj_k$. Noting that for every $x,y\in \mathcal{P}_{\infty}$, we have that 
    \[
    \rho_{\infty}(x,y) \geq \rho_{k}(\proj_k(x),\proj_k(y)),
    \]
    we conclude that $\pi_k$ remains uniformly continuous on $\mathcal{P}_{\infty}$. Thus, it admits a unique uniformly continuous extension to $P_{\infty}$. It remains to show that  $\pi_k(x) \in A_k$ for every $x\in P_{\infty}\setminus\mathcal{P}_{\infty}$.\\

    Let $(x_n)\in \mathcal{P}_{\infty}$ such that $x_n\to x$. For $n\in\N$ large enough, we have that $x_n\notin i_{k}(P_k)$ and so $\pi_k(x_n) \in A_k$. We show that $\pi_k(x_n)$ is eventually constant. If not, we have that $\{\pi_k(x_n)\}$ needs to be infinite, and so it must be the set $\{\proj_k(x_n)\}$. Thus, up to a subsequence, we can assume that $n\in \N\mapsto q_n=\proj_k(x_n)\in Q_k$ is one-to-one. Therefore, each element of $(x_n)$ belongs to a different branch of $\mathcal{P}_{\infty}\setminus i_k(P_k)$, the one emanating from $q_n$. By construction, we have that
    \[
    \mathrm{diam}(\proj_k^{-1}(q_n))\xrightarrow{n\to\infty} 0.
    \]
Indeed, note that the sets $\{\proj_k^{-1}(q_n)\,:\,n\in\N\}$ are pairwise disjoint. Thus, the sum of all terms $\mathrm{diam}(\proj_k^{-1}(q_n))$ is less than the sum of all diameters of all segments glued after the iteration $k$ in the construction of Example~\ref{hyperpato}. Furthermore, for each $r>k$, the sum of the diameters of all segments glued at iteration $j$ is given by $\sum_{n=1}^{\infty}\frac{1}{2^{n+j}} = \frac{1}{2^j}$. Thus,
\[
\sum_{n=1}^{\infty}\mathrm{diam}(\proj_k^{-1}(q_n)) \leq \sum_{j=k+1}^{\infty} \frac{1}{2^j} = \frac{1}{2^k},
\]
and so, the summand converges to zero as we claimed. We deduce that 
\[
\rho_{\infty}(x,i_k(P_k)) = \lim_n \rho_{\infty}(x_n,i_k(P_k)) \leq \lim_n\mathrm{diam}(\proj_k^{-1}(q_n)) = 0,
\]
a contradiction. Thus, $\pi_k(x_n)$ must be eventually constant and so $\pi_k(x)\in A_k$. The proof is complete.
\end{proof}

\begin{proposition}
    For every $k\in\N$, $(P_k,\rho_k)$ is a compact $2$-quasiconvex metric space. Moreover, the sequence of compact subspaces $i_k(P_k)$ converges to the compact $2$-quasiconvex space $(P_\infty,\rho_\infty)$ in the sense of the Hausdorff-Pompeiu distance. 
\end{proposition}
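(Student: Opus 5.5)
We argue by induction on $k$ for the first claim, then use the tail-diameter estimate from the proof of Lemma~\ref{Lemma:TheOneshouldNotBeNamed} for the second.

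\textbf{Compactness and quasiconvexity of $P_k$.} For $k=1$, $(P_1,\rho_1)=([0,1],\theta_1)$ is isometric to the space of Example~\ref{pato_simple}, which is compact and $2$-quasiconvex by Proposition~\ref{prop:Pato-simple-properties}. Assume $(P_k,\rho_k)$ is compact and $2$-quasiconvex. Each glued piece $([0,r_n],\theta_{r_n})$ is $2$-quasiconvex, so Lemma~\ref{pegado-quasiconvex} gives that $(P_{k+1},\rho_{k+1})$ is complete and $2$-quasiconvex. For total boundedness, fix $\varepsilon>0$. Every branch glued at $q_n$ has diameter $r_n=2^{-(n+k)}$, so only finitely many branches, say those with $n\le N$, have diameter at least $\varepsilon$. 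Take a finite $\varepsilon$-net of $P_k$ together with finite $\varepsilon$-nets of these finitely many compact branches. Any point on a branch with $n>N$ lies within $r_n<\varepsilon$ of its root $q_n\in P_k$, hence within $2\varepsilon$ of the net. So $P_{k+1}$ is totally bounded and complete, hence compact.

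\textbf{Hausdorff convergence.} Since $i_k(P_k)\subset P_\infty$, it suffices to bound $\sup_{x\in P_\infty}\rho_\infty(x,i_k(P_k))$. For $x\in\mathcal{P}_\infty\setminus i_k(P_k)$, the distance to $i_k(P_k)$ equals $\rho_\infty(x,\proj_k(x))$, and this is at most the diameter of the branch $\proj_k^{-1}(q)$ containing $x$. That branch is the union of one segment glued at step $k$ and all its later descendants, so by the glueing metric its diameter is bounded by the sum of the diameters of all segments glued after step $k-1$, namely $\sum_{j\ge k}2^{-j}=2^{1-k}$. I would state this estimate carefully: by the definition of $\rho$ in Lemma~\ref{pegado-quasiconvex}, distances along a branch are sums of distances inside segments. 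Passing to the completion preserves the bound, so $d_H(i_k(P_k),P_\infty)\le 2^{1-k}\to0$.

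\textbf{Compactness and quasiconvexity of $P_\infty$.} The space $P_\infty$ is complete by construction. It is totally bounded because $i_k(P_k)$ is compact and is a $2^{1-k}$-net of $P_\infty$; hence $P_\infty$ is compact. Finally, apply Lemma~\ref{limit-of-quasiconvex} with $X=P_\infty$, $X_k=i_k(P_k)$ and $C=2$. Each $X_k$ is isometric to $P_k$, so it is $2$-quasiconvex, and $X_k\to P_\infty$ in Hausdorff distance. The lemma then gives that $(P_\infty,\rho_\infty)$ is $2$-quasiconvex.

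\textbf{Main obstacle.} The delicate step is the uniform diameter bound for branches. One has to check that iterated glueing adds up lengths without shortcuts, i.e.\ that $\rho_\infty$ between two points of one branch never exceeds the sum of the diameters of the segments involved. This follows from the four-case formula for $\rho$ in Lemma~\ref{pegado-quasiconvex}, but it needs to be tracked through the induction.
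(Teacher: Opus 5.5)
Your proposal is correct and follows essentially the same route as the paper. The paper also argues by induction via Lemma~\ref{pegado-quasiconvex} with a total-boundedness argument (finitely many large branches, small branches absorbed near their roots), gets a Hausdorff bound of order $2^{1-k}$ from the summable diameters of the glued segments, deduces compactness of $P_\infty$ from the compact $2^{1-k}$-nets $i_k(P_k)$, and obtains quasiconvexity from Lemma~\ref{limit-of-quasiconvex}; the only difference is that it telescopes the one-step estimates $\rho_{k+1,H}(i_{k,k+1}(P_k),P_{k+1})\le 2^{-k}$, while you bound branch diameters directly. The ``main obstacle'' you flag is not a real difficulty: you only need an upper bound on $\rho_\infty(x,\proj_k(x))$, and this follows from the triangle inequality along the chain of segments joining $x$ to its root, so no ``no shortcuts'' property has to be checked.
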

\begin{proof}
   Let us start by proving compactness and quasiconvexity of $(P_k,\rho_k)$. We know that $(P_1,\rho_1)$ is a compact $2$-quasiconvex space. We proceed by induction: let $k\geq 2$ and assume that $(P_{k-1},\rho_{k-1})$ is compact and $2$ quasiconvex. Thanks to Lemma~\ref{pegado-quasiconvex}, we have that $(P_k,\rho_k)$ is complete and $2$-quasiconvex as well. Let us check that is compact by showing that it is totally bounded. Indeed, let $\varepsilon>0$. Since $P_{k-1}$ is compact, there is a finite set $\{x_n\}_{n=1}^N\subset P_{k-1}$ such that
    \[P_{k-1}\subset\bigcup_{n=1}^N B\left(x_n,\frac{\varepsilon}{2}\right).\]
    Define now $y_n:=i_{k,k+1}(x_n)\in P_k$ and note that 
    \[\bigcup_{n=1}^N B(y_n,\varepsilon)\supset O_k:= \bigcup_{x\in P_{k-1}} B\left(i_{k,k+1}(x),\frac{\varepsilon}{2}\right).\]
    Also, by construction of $P_k$, the set $P_k\setminus O_k$ consists only on finitely many bounded segments. This readily yields that $P_k$ is totally bounded. 
    \\
    
    Next, we check convergence of the sequence $i_k(P_k)$ to $P_\infty$. Let $\rho_{k,H}$ be the Hausdorff distance with respect to the metric $\rho_k$ on $P_k$ for $k\in\N\cup\{\infty\}$. By construction of $P_k$, for any $k\in\N$ 
    \[\rho_{k+1,H}(i_{k,k+1}(P_k),P_{k+1})\leq \frac{1}{2^k}.\] 
    Therefore, it follows that
    \[\rho_{\infty,H}(i_{k}(P_k),P_{\infty})= \rho_{\infty,H}(i_{k}(P_k),\mathcal{P}_{\infty})\leq \sum_{n\geq k}^\infty \frac{1}{2^n}=\dfrac{1}{2^{k-1}}.\] 
    So, $i_k(P_k)$ converges to $P_\infty$ with respect to the Hausdorff-Pompeiu distance. Hence, $P_\infty$ is a compact space. The $2$-quasiconvexity of $(P_\infty,\rho_\infty)$ follows from Lemma~\ref{limit-of-quasiconvex}.
\end{proof}
Let us define the subset $L_\infty$ of \textit{leaves} of $P_\infty$ by
\[L_\infty := \bigcup_{k=1}^\infty i_k(L_k).\]
\begin{proposition}\label{prop: density}
    $L_\infty$ is dense subset of $(P_\infty,\rho_\infty)$.
\end{proposition}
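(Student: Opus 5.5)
Since $\mathcal{P}_\infty=\bigcup_k i_k(P_k)$ is dense in its completion $P_\infty$, it suffices to approximate every point of $\mathcal{P}_\infty$ by leaves. Fix $x\in\mathcal{P}_\infty$ and $\varepsilon>0$, choose $k$ with $x\in i_k(P_k)$, and write $x=i_k(z)$ with $z\in P_k$. The plan is to pass to level $k$, use that $Q_k$ is dense in $P_k$, and then pick up a leaf created at the next level.

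First step: find $q\in Q_k$ close to $z$. Since $Q_k$ is dense in $P_k\setminus L_k$, we need $P_k\setminus L_k$ to be dense in $P_k$. This holds because every point of $P_k$ lies on a nondegenerate arc: either the initial curve $[0,1]$ or a glued copy $([0,r_n],\theta_{r_n})$. Each such arc has no isolated points, and $L_k$ is countable. So there is $q=q_n\in Q_k$ with $\rho_k(z,q_n)<\varepsilon/2$. We may also take $n$ large, since infinitely many elements of the dense set $Q_k$ lie in any ball around $z$.

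Second step: use the leaf glued at $q_n$. In the construction of $P_{k+1}$ via Lemma~\ref{pegado-quasiconvex}, the point $(q_n,r_n)$ is a leaf in $L_{k+1}$. By the definition of $\rho$ in that lemma,
\[
\rho_{k+1}\big((q_n,r_n),(q_n,0)\big)=d_n(r_n,0)\le \mathrm{diam}([0,r_n],\theta_{r_n})=r_n=2^{-(n+k)}.
\]
In addition, $(q_n,0)=i_{k,k+1}(q_n)$ and $i_{k,k+1}$ is an isometry. Hence
\[
\rho_\infty\big(x,i_{k+1}(q_n,r_n)\big)\le \rho_k(z,q_n)+2^{-(n+k)}<\varepsilon
\]
once $n$ is chosen with $2^{-(n+k)}<\varepsilon/2$. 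Since $i_{k+1}(q_n,r_n)\in L_\infty$, this proves density in $\mathcal{P}_\infty$, and therefore in $P_\infty$.

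The only step needing care is the first one. One must check that $L_k$ has empty interior in $P_k$, so that density of $Q_k$ in $P_k\setminus L_k$ transfers to density in $P_k$. This reduces to observing that every point of $P_k$ is a non-isolated point lying on a continuum, so it cannot be isolated in the complement of a countable set. The rest is direct bookkeeping with the formula for $\rho$ and the isometries $i_{k,k+1}$.
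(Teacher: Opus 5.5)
Your proof is correct and follows essentially the same idea as the paper: leaves glued at points of the dense set $Q_k$ lie within $r_n\le 2^{-(k+1)}$ of the corresponding points of $P_k$, and $\bigcup_k i_k(P_k)$ is dense in $P_\infty$. The paper packages this as the Hausdorff estimates $\rho_{k,H}(L_k,P_k)\le 2^{-k}$ and $\rho_{\infty,H}(i_k(P_k),P_\infty)\le 2^{-k+1}$. You instead argue pointwise via density of $\mathcal{P}_\infty$ in its completion, and you also justify that $P_k\setminus L_k$ is dense in $P_k$, which the paper leaves implicit.
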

\begin{proof}
Fix $k\in \N$. By construction of $P_k$ and $L_k$, we have $\rho_{k,H}(L_k,P_k)\leq 2^{-k}$. Also, we have that $\rho_{\infty,H}(i(P_k),P_\infty)=\rho_{\infty,H}(i(P_k),\mathcal{P}_\infty)\leq 2^{-k+1}.$
Thus, $\rho_{\infty,H}(i_k(L_k),P_\infty)\leq 2^{-k+1}+2^{-k}$.
Hence, $L_\infty$ is dense in $P_\infty$.
\end{proof}

\begin{proposition}\label{hyperpato_antiEikonal}
    There is no solvable slope eikonal equation of the form~\eqref{eq: eikonal complete} on the space $(P_\infty, \rho_\infty)$.
\end{proposition}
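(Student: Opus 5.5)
Assume, towards a contradiction, that for some nonempty open $\Omega\subsetneq P_\infty$ and data $(\ell,g)$ as in the definition of eikonal space, equation~\eqref{eq: eikonal complete} has a continuous pointwise solution $u$. The plan is to reproduce the argument of Proposition~\ref{prop:Pato-simple-properties} at a leaf lying inside $\Omega$. By Proposition~\ref{prop: density}, $L_\infty$ is dense, so there is a leaf $\overline{x}\in L_\infty\cap\Omega$. Then $\overline{x}=i_{k+1}((q_n,r_n))$ for some $k$ and $q_n\in Q_k$, or $\overline{x}=i_1(1)$ when $k=0$. It is the endpoint of a glued segment $([0,r_n],\theta_{r_n})$, i.e.\ a rescaled copy of the space of Example~\ref{pato_simple}, and the leaf corresponds to $r_n$, the rescaled $x_\infty$. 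The points $y_m:=r_n(1-2^{1-m})$ on this segment converge to $\overline{x}$. By \eqref{eq:Pato-effect}, the intrinsic length of the segment piece from $y_m$ to $\overline{x}$ equals $2\rho_\infty(\overline{x},y_m)$.

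\textbf{Step 1 (the branch is preserved).} We need to know what later iterations glue onto this segment. Because $Q_j\subset P_j\setminus L_j$, the leaf itself never receives a branch. The segment does receive dense branches at the points of the $Q_j$, but by the construction of $\rho$ in Lemma~\ref{pegado-quasiconvex} these are attached in a tree-like way. Distances between points of the segment are therefore still $\theta_{r_n}$, since $i_{k_1,k_2}$ are isometries and the completion does not change them. Moreover, every curve in $P_\infty$ ending at $\overline{x}$ must eventually run along the segment toward $\overline{x}$.

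\textbf{Step 2 (representation formula).} Apply Lemma~\ref{lax formula} to $u$ on $\overline{\Omega}$; this is legitimate since $s[u]=\ell$ is continuous with $\inf\ell>0$. It gives, for each point, an optimal $1$-Lipschitz curve realizing \eqref{eq:Lax-pancho}. From this I would extract the lower bound
\[
u(\overline{x})-u(y_m)\geq \int_{\text{piece from }y_m\text{ to }\overline{x}}\ell\geq(\ell(\overline{x})-\varepsilon)\cdot 2\rho_\infty(\overline{x},y_m)
\]
for $m$ large. The argument is as follows. A $1$-Lipschitz curve is available from $y_m$ to $\overline{x}$ along the segment; in fact $\overline{x}$ has a neighbourhood inside $\Omega$ where only this branch and its subbranches live. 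For the optimal curve $\gamma$ from $\overline{x}$, Step 1 forces $\gamma$ to pass through $y_m$, so the DPP along $\gamma$ yields $u(\overline{x})=\int_{\overline{x}\to y_m}\ell+u(y_m)$ with an integral over a path of length at least $d_I(\overline{x},y_m)=2\rho_\infty(\overline{x},y_m)$.

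\textbf{Step 3 (conclusion).} Dividing by $\rho_\infty(\overline{x},y_m)$ and letting $m\to\infty$ gives $s[u](\overline{x})\geq 2\ell(\overline{x})>\ell(\overline{x})$, which is the contradiction. This works for every choice of $\Omega$, $\ell$ and $g$.

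The main obstacle is Step 1: proving rigorously in the completion $P_\infty$ that any $1$-Lipschitz curve leaving the leaf $\overline{x}$ must traverse the original segment through the points $y_m$. I would attempt this with the projections $\proj_{k+1}$ onto $i_{k+1}(P_{k+1})$ and Lemma~\ref{Lemma:TheOneshouldNotBeNamed}. The coordinate $\pi_{k+1}$ along the branch is continuous, and off $i_{k+1}(P_{k+1})$ it takes values only in the countable set $A_{k+1}$. Hence a continuous curve can move $\pi_{k+1}$ only while staying in $i_{k+1}(P_{k+1})$, where the segment near the leaf is an arc. This should force the curve to pass through each $y_m$.
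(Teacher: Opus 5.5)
Your proposal follows the paper's proof essentially step by step: a leaf $\bar x\in L_\infty\cap\Omega$ chosen by density, the optimal curve from Lemma~\ref{lax formula}, a localization argument showing that this curve initially runs along the glued segment, and the factor $2$ from \eqref{eq:Pato-effect}. The paper closes your Step~1 as you suggest, via continuity of $\pi_k$ and the countability of $A_k$ from Lemma~\ref{Lemma:TheOneshouldNotBeNamed}, plus one ingredient you should add, namely injectivity of the optimal curve (immediate, since $u(\gamma(s))-u(\gamma(t))=\int_s^t\ell(\gamma(r))\,dr>0$ for $s<t$); this is what excludes excursions off $i_k(P_k)$ that come back to the segment, and without it your claim that \emph{every} curve ending at $\bar x$ eventually runs along the segment is false, because a non-injective curve can dip into subbranches glued arbitrarily close to $\bar x$.
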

\begin{proof}
    Let $\Omega\subsetneq P_\infty$ be a nonempty open set, $\ell\in \mathcal{C}_b(\Omega)$, with $\inf_\Omega \ell>0$, and $g\in C(\partial \Omega)$ such that the pair $(\ell,g)$ satisfies~\eqref{CC}.
    Assume that the equation
    \[\begin{cases}s[u](x)=\ell(x),\quad x\in \Omega\\
    u(x)=g(x),\quad x\in \partial \Omega\end{cases}\]
    has a continuous pointwise solution $u:\overline{\Omega}\to\R$. 
    Due to the density of $L_\infty$, there exists $\overline x\in L_\infty\cap \Omega$. 
    Thanks to Lemma~\ref{lax formula}, there is a $1$-Lipschitz curve $\gamma:[0,T]\to \overline{\Omega}$ such that $\gamma(0)=\overline x$, $\gamma(T)\in \partial \Omega$, $\gamma([0,T))\subset \Omega$ and
    \[u(\overline x)=\int_0^T\ell(\gamma(t))dt + g(\gamma(T)).\]
    Since $\overline x\in L_\infty$, there is a (unique) $k\in \N$ such that $\overline{x}\in i_k(P_k)\setminus i_{k-1}(P_{k-1})$. 
    We claim that there is $\varepsilon>0$ such that $\gamma([0,\varepsilon))\subset i_k(P_k)$. Since $\gamma$ and $\pi_k$ are continuous, then $\pi_k\circ\gamma$ is continuous as well. Note also that there is $\delta>0$ such that 
    \[\pi_k(x)\geq\pi_k(\overline{x})~\text{and}~x\neq \overline{x} \implies \rho_\infty (x,\overline{x})\geq \delta.\]
    Indeed, $\delta$ can be chosen as  $\rho_{\infty}(\overline{x},i_{k-1}(P_{k-1}))$.
    Let $r\in (0,\delta)$ small enough such that $\pi_{1},\ldots, \pi_{k-1}$ are constant on $i_k(P_k)\cap B(\bar{x},r)$, and choose $t>0$ small enough such that $\gamma([0,t])\subset B(\bar{x},r)$. Injectivity of $\gamma$ implies that $\pi_k(\gamma(t))<\pi_k(\overline x)$. Thus, $\pi_k\circ\gamma([0,t])\supset [\pi_k(\gamma(t)),\pi_k(\overline x)]$. 
    Let $\varepsilon\in (0,t)$ be such that 
    \[\pi_k(\gamma(\varepsilon))\notin A_k\quad\text{and}\quad \rho_\infty (\overline{x},\gamma(\varepsilon))\leq \rho_{\infty}(\overline x, i_{k-1}(P_{k-1})).\]
    Thus, by definition of $A_k$, we have $\gamma(\varepsilon)\in i_k(P_k)$.
    Now we prove that $\gamma((0,\varepsilon))\subset i_k(P_k)$. 
    Indeed, otherwise there is $\overline{s}\in (0,\varepsilon)$ such that $\gamma(\overline s)\notin i_k(P_k)$.
    Since $\gamma$ is injective, it follows that ${\pi_k(\gamma(\overline s))\in (\pi_k(\gamma(\varepsilon)), \pi_k(\bar{x}))}$.
    Let us define
            \[s_1:=\sup\{s<\overline s:~\gamma(s)\in i_k(P_k)\}\quad\text{and}\quad s_2:=\inf\{s> \overline{s}:~ \gamma(s)\in i_k(P_k)\}.\]
    By the choice of $\gamma(0)$ and $\gamma(\varepsilon)$, $s_1$ and $s_2$ are well defined. Moreover, thanks to the continuity of $\gamma$ and the compactness of $i_k(P_k)$, we also know that $s_1<\overline{s}<s_2$ and $\gamma(s_1),\gamma(s_2)\in i_k(P_k)$. Also, we have that
    \[
    \pi_k\circ\gamma ((s_1,s_2))\subset A_k.
    \]
    Indeed, $\gamma ((s_1,s_2))\subset P_{\infty}\setminus i_k(P_k)$, see Lemma~\ref{Lemma:TheOneshouldNotBeNamed}.  Since $A_k$ is countable, necessarily $\pi_k\circ\gamma ((s_1,s_2))$ is a singleton, and so $\pi_k(\gamma(s_1))=\pi_k(\gamma(s_2))$. Since $\pi_j(\gamma(s_1)) = \pi_j(\gamma(s_2))$ for all $j\in\{1,\ldots,k-1\}$ and $\gamma(s_1),\gamma(s_2)\in i_k(P_k)$,  
    we deduce that
    $\gamma(s_1)=\gamma(s_2)$. This contradicts the injectivity of $\gamma$.\\
    
    We proceed to prove that $s[u](\bar{x})>\ell(\bar{x})$. By construction, on $P_{k}$ we have that $\bar{x} = (q_m,r_m)$ for some $q_m\in Q_k$, with $r_m = 2^{-(k+m)}$. Moreover, by~\eqref{eq:Pato-effect}, we have that
    \[\rho_{k,I}\left((q_m,r_m),\left(q_m, r_m-\frac{r_m}{2^n}\right)\right) = 2\rho_k\left((q_m,r_m),\left(q, r_m-\frac{r_m}{2^n}\right)\right)\quad\text{for all }n\in \N.\]
    Combining this with the fact that $i_k:P_k\to P_\infty$ is an isometry, we can continue as in the proof of Example~\ref{pato_simple}. Using continuity of $\ell\circ \gamma$, we conclude that
    $s[u](\bar{x})\geq 2\ell(\bar{x})$.
\end{proof}
We have shown that the construction of Example~\ref{hyperpato} is a compact a quasiconvex metric space where no eikonal equation admits solution. This can be regarded as an extreme failure of the stability of the eikonal property for metric spaces under bi-Lipschitz homeomorphism (in this case, $\mathrm{Id}:(P_\infty,\rho_\infty)\to (P_\infty,\rho_{\infty,I})$). 

\section{Proof of Theorem~\ref{thm: main}}\label{sec:main}
The proof of Theorem~\ref{thm: main} is rather long and it will require several intermediate steps. Let us briefly summarize them.
We first define the class of spaces  that satisfy the hypothesis of Theorem~\ref{thm: main} (the \textit{$E_{1,0}$-property} below) to deduce that, in these spaces, the intrinsic distance is well defined and continuous. 
Then, we proceed with the continuity of the natural candidate of solution of any eikonal equation (i.e. the one provided by Corollary~\ref{cor: uniqueness}).
In the next step we show that in compact spaces with the property $E_{1,0}$ we can solve eikonal equations with more general boundary data $g\in C(\partial \Omega)$. 
In a very unexpected but useful turn, we use the previous step to prove the following key observation: If $(X,d)$ has the $E_{1,0}$-property, then the product space $(X\times [0,1],\rho)$, equipped with the distance $\rho:=d+|\cdot|$, also enjoys the $E_{1,0}$-property. We end this section with the last arguments to prove Theorem~\ref{thm: main}.\\

Let us start with the following definitions.
\begin{definition}
\phantom{ii}
\begin{enumerate}
    \item A space $(X,d)$ has the $E_{1,0}$-property if every eikonal equation over an arbitrary nonempty open set $\Omega\subsetneq X$, with data $\ell\equiv 1$ and $g\equiv 0$, admits a continuous pointwise solution. (i.e. the hypothesis of Theorem~\ref{thm: main}.)
    \item A space $(X,d)$ has the $E_{1,g}$-property if every eikonal equation over an arbitrary nonempty open set $\Omega\subsetneq X$, with data $\ell\equiv 1$ and $(\ell,g)$ satisfying the compatibility condition \eqref{CC} admits a continuous pointwise solution. 
\end{enumerate}
\end{definition}

\begin{proposition}
    Assume that $(X,d)$ is a compact metric space that has the $E_{1,0}$-property. Then the intrinsic distance ${d_I:X\times X\to \R}$ is well defined and continuous. 
\end{proposition}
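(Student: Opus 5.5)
The plan is to take $K=\{\bar y\}$ in the $E_{1,0}$-property and identify the resulting solution with $d_I(\cdot,\bar y)$. Rectifiable connectedness then gives finiteness, and a uniform modulus of continuity gives joint continuity.

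\textbf{Step 1: finiteness.} We may assume $X$ has at least two points. Fix $\bar y\in X$ and set $\Omega:=X\setminus\{\bar y\}$.

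First we show $\bar y$ is not isolated. If it were, $\Omega$ would be clopen, hence compact. The solution $u$ of the $E_{1,0}$ equation on $\Omega$ would then attain a minimum in $\Omega$, where its slope is $0\neq 1$, a contradiction. Hence $\partial\Omega=\{\bar y\}$.

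Let $u_{\bar y}$ be the continuous solution of $s[u]=1$ on $\Omega$ with $u(\bar y)=0$. By Lemma~\ref{lax formula} (applied with $\ell\equiv 1$),
\[
u_{\bar y}(x)=\inf\{T:\ \gamma\in\mathrm{Lip}_1([0,T],X),\ \gamma(0)=x,\ \gamma(T)=\bar y,\ \gamma([0,T))\subset\Omega\},
\]
and the infimum is attained.

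We claim $u_{\bar y}(x)=d_I(x,\bar y)$.
\begin{itemize}
\item A $1$-Lipschitz curve on $[0,T]$ has length at most $T$, so $d_I(x,\bar y)\le u_{\bar y}(x)$.
\item Conversely, any rectifiable curve from $x$ to $\bar y$ can be cut at its first hitting time of $\bar y$ and reparametrized by arc length. This gives an admissible curve with $T$ at most its length. Hence $u_{\bar y}(x)\le d_I(x,\bar y)$.
\end{itemize}
So $d_I(\cdot,\bar y)=u_{\bar y}$ is finite everywhere, and it is continuous in the first variable. Since $\bar y$ was arbitrary, $d_I$ is a genuine metric. This step is essentially Corollary~\ref{cor: rec-connected}.

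\textbf{Step 2: joint continuity.} Separate continuity alone does not give joint continuity, so we need a modulus uniform in $\bar y$. By the triangle inequality for $d_I$,
\[
|d_I(x,y)-d_I(x',y')|\le d_I(x,x')+d_I(y,y').
\]
It therefore suffices to show that $\omega(r):=\sup\{d_I(a,b): d(a,b)\le r\}$ tends to $0$ as $r\to0$.

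Argue by contradiction. Suppose there are $a_n,b_n$ with $d(a_n,b_n)\to0$ and $d_I(a_n,b_n)\ge\varepsilon$. By compactness, pass to a subsequence with $a_n,b_n\to z$. Then
\[
d_I(a_n,b_n)\le u_z(a_n)+u_z(b_n).
\]
Since $u_z$ is continuous with $u_z(z)=0$, the right-hand side tends to $0$, a contradiction.

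\textbf{Main obstacle.} The key point is that continuity of each $u_z$ at its own base point $z$ already yields the uniform modulus via compactness. The only delicate step is the identification $u_{\bar y}=d_I(\cdot,\bar y)$ in Step 1, specifically the truncation and arc-length reparametrization needed to satisfy the constraint $\gamma([0,T))\subset\Omega$. That constraint only requires the curve to avoid $\bar y$ before time $T$, which the first-hitting-time cut guarantees.
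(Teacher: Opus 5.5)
Your proof is correct and follows the paper's route: you identify $d_I(\cdot,\bar y)$ with the $E_{1,0}$ solution via Lemma~\ref{lax formula}, and you conclude with $|d_I(x,y)-d_I(x',y')|\le d_I(x,x')+d_I(y,y')$. The paper does the identification for an arbitrary closed $F$ and leaves implicit the non-isolatedness, first-hitting-time truncation and arc-length reparametrization details that you supply.

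The only difference is your Step 2, and the compactness argument for a uniform modulus is not needed there. For $x_n\to\bar x$ and $y_n\to\bar y$, the right-hand side equals $u_{\bar x}(x_n)+u_{\bar y}(y_n)\to 0$ directly, which is exactly how the paper finishes. Your detour additionally yields uniform continuity, but that is automatic on the compact space $X\times X$ anyway.
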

\begin{proof}
    As a direct consequence of having the $E_{1,0}$-property and Lemma~\ref{lax formula}, we deduce that for any nonempty closed set $F\subset X$, the intrinsic distance function to $F$, $d_I(\cdot,F)$, is the unique solution of the equation
    \[\begin{cases}
        s[u]=1,\quad&x\in X\setminus F\\
        \phantom{iii}u=0,& x\in F.
    \end{cases}\]
    Therefore, $d_I(\cdot,F)$ is well defined and continuous.
    Now, let $\{x_n\}_n,\{y_n\}_n\subset X$ and $\overline{x},\overline{y}\in X$ be such that $\lim_nx_n=\overline x$ and $\lim_n y_n=\overline{x}$. 
    It easily follows that
    \begin{align*}
        |d_I(x_n,y_n)-d_I(\overline{x},\overline{y})|\leq d_I(x_n,\overline{x}) +d_I(y_n,\overline{y}),\quad\text{for all }n\in \N.
    \end{align*}
    Thus, we get $\lim_n d_I(x_n,y_n)=d_I(\overline{x},\overline{y})$. The continuity of $d_I$ follows.
\end{proof}
Now we prove the continuity of our candidate for solution.
\begin{lemma}\label{lemma:ContinuityV}
  Let $(X,d)$ be a compact metric space that has the $E_{1,0}$-property. Let $\Omega \subsetneq X$ be a nonempty open set, $\ell\in \mathcal{C}_{b}(\Omega)$ with $\inf_{\Omega}\ell >0$, and $g\in C(\partial\Omega)$  such that the pair $(\ell,g)$ satisfies~\eqref{CC}. Then the function $V:\overline{\Omega}\to\R$ defined by
  \[
    V(x):=\inf\left \{\int_0^T \ell(\gamma(t))dt+g(\gamma(T))\,\colon \begin{array}{l}
     ~ T>0,~\gamma\in \mathrm{Lip}_1([0,T],\overline{\Omega}),\\
     \gamma(0)=x,~\gamma(T)\in \partial \Omega,~\gamma((0,T))\subset\Omega
     \end{array}\right\}.
     \]
     is continuous.
\end{lemma}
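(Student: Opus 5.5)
We prove continuity of $V$ on $\overline{\Omega}$ by combining an intrinsic-Lipschitz-type bound in the interior with a boundary estimate. Throughout, $M:=\sup_\Omega \ell<+\infty$. By the preceding proposition, $d_I$ is finite and continuous on $X\times X$; since $X$ is compact, $d_I$ is uniformly continuous with respect to $d$. The plan is to show that $V$ is uniformly continuous with respect to $d_I$, with a modulus built from $M$, the modulus of continuity of $g$, and \eqref{CC}. Continuity of $V$ with respect to $d$ then follows.

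\textbf{Step 1 (values on the boundary and finiteness).} For $x\in\partial\Omega$, the only admissible curves leave $x$, enter $\Omega$ and return to $\partial\Omega$ at some $y$. By \eqref{CC}, each such curve has cost at least $g(x)$. Taking $T\to0$, or using the convention for the degenerate curve, gives $V(x)=g(x)$ on $\partial\Omega$. For $x\in\Omega$, Corollary~\ref{cor: rec-connected}-type reasoning under the $E_{1,0}$-property, namely Lemma~\ref{lax formula} applied to $d_I(\cdot,\partial\Omega)$, produces a 1-Lipschitz curve from $x$ to $\partial\Omega$ staying in $\Omega$ before its endpoint, of length $d_I(x,\partial\Omega)$. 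Hence $V(x)\le M\,d_I(x,\partial\Omega)+\max g<\infty$. A lower bound $V\ge \min g$ is immediate, so $V$ is finite.

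\textbf{Step 2 (interior estimate).} Take $x,y\in\Omega$, and let $\sigma$ be a near-geodesic curve for $d_I$ from $x$ to $y$. If $\sigma$ stays in $\Omega$, concatenating $\sigma$ with an almost optimal curve for $y$ gives $V(x)\le V(y)+M\,\mathrm{len}(\sigma)$. If $\sigma$ meets $\partial\Omega$, let $z$ be its first hitting point. Then $V(x)\le M\,d_I(x,z)+g(z)$. For the reverse direction we need a lower bound on $V(y)$ near $z$, and this is Step 3. In either case, when $d_I(x,y)$ is small, either the direct bound holds or $x$ and $y$ are both $d_I$-close to a common boundary point.

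\textbf{Step 3 (boundary continuity — main obstacle).} We must show that $V(y)\to g(z)$ as $y\to z\in\partial\Omega$, with $y\in\Omega$. The upper bound comes from the curve to $z$ built from $d_I(\cdot,\partial\Omega)$: it gives $V(y)\le g(z')+M\,d_I(y,\partial\Omega)$, where $z'$ is the endpoint of that curve. The difficulty is that $z'$ may differ from $z$. However, $d_I(z,z')\le d_I(z,y)+d_I(y,z')\to0$, so continuity of $g$ together with uniform continuity of $d_I$ handles this.

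For the lower bound, take an almost optimal curve $\gamma$ from $y$ ending at $w\in\partial\Omega$. We would like to compare $g(w)$ with $g(z)$ through \eqref{CC}. To do so, we would prepend to $\gamma$ a short curve from $z$ to $y$ inside $\Omega$, and this is the delicate point: such a curve need not stay in $\Omega$. My first attempt is to take the last boundary point $z''$ on a $d_I$-short curve from $z$ to $y$, so that $d_I(z'',z)\to0$ and the subcurve from $z''$ to $y$ lies in $\Omega$. Applying \eqref{CC} to the pair $(z'',w)$ then gives
\[
g(z'')-g(w)\le M\,d_I(z'',y)+\int_0^T\ell(\gamma)\,dt .
\]
It follows that $V(y)\ge g(z'')-M\,d_I(z'',y)-\varepsilon\to g(z)$. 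This yields continuity at boundary points.

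Combining Steps 2 and 3 (splitting into the cases where the connecting curve does or does not touch $\partial\Omega$) gives uniform continuity of $V$ with respect to $d_I$, and therefore with respect to $d$.
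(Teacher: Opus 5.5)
Your proof is correct and follows essentially the same route as the paper. At interior points you use a local Lipschitz bound in terms of $d_I$. At a boundary point you get the upper bound from a $d_I$-short curve stopped at its first hit of $\partial\Omega$, and the lower bound by concatenating a near-optimal curve from $y$ with such a short piece and applying \eqref{CC}. The differences are cosmetic: you argue directly with almost-optimal curves rather than by contradiction with optimal ones, which spares you the paper's appeal to Arzel\`a--Ascoli for attainment of the infimum defining $V(y_n)$.
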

\begin{proof}
    Note that the compatibility condition~\eqref{CC} entails that $V = g$ on $\partial \Omega$. 
    Moreover, since $d_{I}$ is continuous, we deduce that $V$ is continuous on $\Omega$. 
    Indeed, take $x\in \Omega$ and choose $\delta>0$ small enough such that $B_{I}(x,\delta)\subset \Omega$. 
    Then, using the dynamic programming principle, we deduce that
\[
|V(x) - V(y)| \leq Ld_I(y,x),\quad \forall y\in B_I(x,\delta),
\]
where $L=\sup_{\Omega}\ell$. In particular, continuity of $d_I$ yields the continuity of $V$ at $x$.\\

Now, fix $x\in\partial \Omega$. Let $y\in \overline{\Omega}$ and let $\gamma_y:[0,T]\to X$ be a $1$-Lipschitz curve connecting $y$ and $x$ such that $\mathrm{len}(\gamma_{y}) = d_I(x,y)$. 
Set $t_y = \sup\{ s\geq 0\mid \gamma_{y}((0,s))\subset \Omega\}$ (or $t_y=0$ if there is no $s>0$ such that $(0,s)\subset \Omega$) and $z_y = \gamma_{y}(t_y)$. 
It follows that $z_y\in\partial\Omega$. 
Since $d_I(\cdot,x)$ is continuous, it follows that $z_y$ tends to $x$ as $y$ tends to $x$. Then, we compute
\begin{align*}
    \liminf_{y\in \overline{\Omega}\to x} V(y)\leq 
    \liminf_{y\in \overline{\Omega}\to x} Ld_I(y,x)+g(z_y)=g(x)= V(x).
\end{align*}

where the second last equality follows from the continuity of $g$. Now, suppose that there exist a sequence $\{y_n\}_n\subset \overline{\Omega}$, convergent to $x$, and $\sigma>0$ such that $\lim_{n\to\infty} V(y_n) = V(x) - \sigma$. 
Since $g\in C(\partial \Omega)$, we have that $\{y_n\}_n\subset \Omega$.
Thanks to the Arzela-Ascoli theorem, for all $n\in \N$, the infimum defining $V(y_n)$ is in fact a minimum.
Therefore, we can select a $1$-Lipschitz curve $\nu_{n}:[0,S_n]\to \overline{\Omega}$ and a point $w_n\in \partial \Omega$ such that $\nu_n((0,S_n))\subset \Omega$, $\nu_n(0) = y_n$, $\nu_n(S_n) = w_n$ and
\[
V(y_n) = \int_0^{S_n}\ell(\nu_n(t))dt + g(w_n).
\]
Let $\eta_n:[-S_n,t_{y_n}] \to \overline{\Omega}$ be the concatenation of $\widehat{\nu}_n(\cdot):[-S_n,0]\to\overline{\Omega}$ and $\gamma_{y_n}|_{[0,t_{y_n}]}$ connecting $w_n$ and $z_{y_n}$, where $\widehat{\nu}_n(t)=\nu_n(-t)$. Then, fix $n\in\N$ large enough so that $V(y_n)\leq V(x)-\sigma/3$, $d_I(z_{y_n},y_n)\leq \sigma/6L$ and $g(z_{y_n})\geq V(x) - \sigma/2$. We can compute
\begin{align*}
    g(w_n) + \int_{-S_n}^{t_{y_n}}\ell(\eta_n(t))dt &\leq V(y_n)+\int_0^{t_{y_n}}\ell(\gamma_{y_n}(t))dt\\
    &\leq V(y_n) + Ld_I(z_{y_n},y_n) \\
    &\leq  V(x) - \frac{\sigma}{2} \leq g(z_{y_n}).
\end{align*}
This is a contradiction with the compatibility condition~\eqref{CC} between the points $w_n$ and $z_{y_n}$.
Thus, $V$ must be continuous at $x\in\partial \Omega$.
\end{proof}

The following proposition shows that, for compact spaces, the $E_{1,0}$-property is equivalent to the $E_{1,g}$-property.
\begin{proposition}\label{prop:E01->E0g}
    Let $(X,d)$ be a compact metric space that has the $E_{1,0}$-property. Then,$(X,d)$ enjoys the $E_{1,g}$-property.
\end{proposition}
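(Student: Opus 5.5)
The plan is to show that the natural candidate $V$ of Lemma~\ref{lemma:ContinuityV}, with $\ell\equiv 1$, is the solution. The idea is that near any interior point, $V$ coincides, up to an additive constant, with an intrinsic distance function $d_I(\cdot,K)$ to a suitable closed set $K$. The $E_{1,0}$-property then hands us the slope.

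\textbf{Step 1: boundary values and continuity.} Fix $\Omega\subsetneq X$ open and $g\in\mathcal{C}(\partial\Omega)$ with $(1,g)$ satisfying~\eqref{CC}. By Lemma~\ref{lemma:ContinuityV}, $V:\overline\Omega\to\R$ is continuous. By~\eqref{CC}, $V=g$ on $\partial\Omega$. So it only remains to prove $s[V](\bar x)=1$ for every $\bar x\in\Omega$. We also record that the infimum defining $V(y)$ is attained for every $y\in\Omega$. This follows by Arzel\`a--Ascoli, exactly as in the proof of Lemma~\ref{lemma:ContinuityV}. Since $\ell\equiv1$, the dynamic programming principle (Lemma~\ref{DPP}) shows that $V$ decreases at unit rate along an optimal curve $\nu$: $V(\nu(t))=V(y)-t$. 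It also gives $V(y)\le V(z)+d_I(y,z)$ whenever a $d_I$-geodesic from $y$ to $z$ stays in $\Omega$.

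\textbf{Step 2: local representation as a distance function.} Fix $\bar x\in\Omega$. Choose $\delta>0$ with $B_I(\bar x,3\delta)\subset\Omega$; this is possible because $d_I$ is continuous. Set $c:=V(\bar x)-\delta$ and
\[
K:=\{y\in\overline\Omega:\ V(y)\le c\}\cup (X\setminus\Omega),
\]
which is closed and does not contain $\bar x$. We claim that $V(y)=c+d_I(y,K)$ for all $y$ in a $d$-neighbourhood $U$ of $\bar x$. We take $U$ small enough that $V>c$ on $U$ and $U\subset B_I(\bar x,\delta/2)$, using continuity of $V$ and $d_I$.

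For "$\ge$": the optimal curve from $y$ has $V$ decreasing at rate $1$. It therefore enters $\{V\le c\}\subset K$ after time $V(y)-c$, unless it exits $\Omega$ earlier, which also lands in $K$. Hence $d_I(y,K)\le V(y)-c$.

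For "$\le$": if $V(y)-c\le d_I(y,K)$ there is nothing to prove, so assume $d_I(y,K)<V(y)-c$. Then $d_I(y,K)<2\delta$, because $V(y)-c\le\delta+|V(y)-V(\bar x)|$ stays close to $\delta$ for $y\in U$. A near-minimizing curve from $y$ to $K$ therefore stays inside $B_I(\bar x,3\delta)\subset\Omega$. Its endpoint $z$ thus lies in $\{V\le c\}$ rather than in $X\setminus\Omega$. By Step 1, $V(y)\le V(z)+\mathrm{len}\le c+d_I(y,K)+\varepsilon$ for every $\varepsilon>0$.

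\textbf{Step 3: conclusion.} By the $E_{1,0}$-property applied to the closed set $K$, together with Lemma~\ref{lax formula} (which identifies the solution as $d_I(\cdot,K)$), we get $s[d_I(\cdot,K)]=1$ on $X\setminus K$, in particular at $\bar x$. The slope is a local quantity and is invariant under adding constants. Since $V=c+d_I(\cdot,K)$ on the neighbourhood $U$, we conclude $s[V](\bar x)=1$.

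\textbf{Main obstacle.} The delicate point is Step 2, "$\le$". Near-optimal curves to $K$ could a priori leave $\Omega$ and reach $X\setminus\Omega$ at a boundary point where $g>c$, where no comparison with $V$ is available. The localization (the choices of $\delta$ and $U$, relying on continuity of $d_I$ and of $V$) is designed precisely to exclude this.
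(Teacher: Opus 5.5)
Your proposal is correct and follows essentially the same route as the paper. Both arguments localize at $\bar x$ and show that $V$ coincides near $\bar x$ with a constant plus the solution of a $(1,0)$ eikonal problem on a superlevel set of $V$. The paper uses $\mathrm{int}\{V\ge V(x)-\varepsilon\}$ with constant boundary data, while you use $c+d_I(\cdot,K)$ with $K=\{V\le c\}\cup(X\setminus\Omega)$; both then conclude via the $E_{1,0}$-property. Your write-up is in fact more explicit on the ``$\le$'' inequality, where your choice of $\delta$ and $U$ keeps near-minimizing curves to $K$ inside $\Omega$; the paper handles this only implicitly through its choice of $\rho$.
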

\begin{proof}
Recall that the $E_{1,0}$-property in compact spaces implies that the intrinsic distance to any nonempty closed subset is continuous.\\

Set $\Omega\subsetneq X$ a nonempty open set and $g\in C(\partial\Omega)$ such that $(1,g)$ verifies the compatibility condition \eqref{CC} (i.e. $\ell\equiv 1$ in $\Omega$). We need to show that
    \begin{equation}\label{eq:InProof-eikonal-(1,g)}
        \begin{cases}
            s[u](x)=1,\quad&x\in\Omega,\\
            u(x)=g(x),& x\in \partial\Omega, 
        \end{cases}
    \end{equation}
    admits a continuous solution. Thanks to Corollary \ref{cor: uniqueness}, the only candidate of solution $V:\overline{\Omega}\to \R$ of the above equation is given by
    \[
    V(x):=\inf\left \{\int_0^T 1dt+g(\gamma(T))\,\colon \begin{array}{l}
     ~ T>0,~\gamma\in \mathrm{Lip}_1([0,T],\overline{\Omega}),\\
     \gamma(0)=x,~\gamma(T)\in \partial \Omega,~\gamma([0,T))\subset\Omega
     \end{array}\right\},
     \]

which is continuous by Lemma \ref{lemma:ContinuityV}. Now, choose $x\in \Omega$. Our aim is to show that $s[V](x) = 1$. To do so, fix $\varepsilon>0$ and consider the set $K:= \{y\in \Omega\mid V(y)\geq V(x) - \varepsilon\}$. Define $\Omega_{\varepsilon} = \mathrm{int}(K)$. The continuity of $V$ ensures that $\mathrm{int}(K)\neq\emptyset$ and that for every $z\in \partial K \setminus\partial\Omega$, one has that $V(z) = V(x)-\varepsilon$. Then, we can consider the auxiliary problem
\begin{equation*}
        \begin{cases}
            s[W](y)=1,\quad&x\in\Omega_{\varepsilon},\\
            W(y)=V(x) - \varepsilon,& y\in \partial\Omega_{\varepsilon}, 
        \end{cases}
    \end{equation*}

Let $\delta = d_I(x,\partial \Omega)>0$. By shrinking $\varepsilon$ if necessary, we can choose $\rho>0$  small enough such that for every $y\in B(x,\rho)$, one has that $d_I(y,\partial K)<\frac{\delta}{2}< d_I(y,\partial \Omega)$. Let $\gamma_{y}:[0,T_y]\to \overline{\Omega}$ be the curve such that $V(y) = T_y + g(\gamma_y(T_y))$ and let $t_y = \sup\{s\geq 0\mid \gamma_y([0,s])\subset K\}$. Note that $\gamma_y(t_y)\in\partial K\setminus\partial\Omega$ and so $V(\gamma_y(t_y)) = V(x)-\varepsilon$.  Using Lemma~\ref{lax formula} and the dynamic programming principle, we get that
\[
V(y) = t_y + V(\gamma_{y}(t_y)) = t_y + V(x)- \varepsilon = W(y).
\]
and so $V$ and $W$ coincide on $B(x,\rho)$, which yields that $s[V](x) = s[W](x)$. Since $X$ has the $E_{1,0}$-property, we deduce that $s[W](x)=1$. The proof is now complete.
\end{proof}
Now we deal with the product space $X\times[0,1]$.
\begin{proposition}\label{proposition:E0g->E01 prod}
    Let $(X,d)$ be a compact metric space. Consider $\widetilde{X}:=X\times [0,1]$ endowed with the 1-distance $\rho$, that is,
    \[
    \rho((x,t),(y,s)) = d(x,y) + |t-s|.
    \]
    Suppose that $(X,d)$ has the $E_{1,0}$ property. Then, $(\widetilde{X},\rho)$ has the $E_{1,0}$ property.
\end{proposition}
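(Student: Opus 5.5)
Fix a nonempty closed set $\widetilde K\subset\widetilde X$. We must show that $\widetilde u:=\rho_I(\cdot,\widetilde K)$, the only candidate by Lemma~\ref{lax formula} and Corollary~\ref{cor: uniqueness}, is continuous and satisfies $s[\widetilde u]=1$ on $\widetilde X\setminus\widetilde K$.

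The plan is to write $\widetilde u$ as an inf-convolution in the $[0,1]$-variable of solutions on $X$. A $1$-Lipschitz curve in $(\widetilde X,\rho)$ has length equal to the length of its $X$-component plus the length of its $[0,1]$-component. Hence $\rho_I((x,t),(y,s))=d_I(x,y)+|t-s|$, and
\[
\widetilde u(x,t)=\inf_{s\in[0,1]}\bigl(|t-s|+v_s(x)\bigr),\qquad v_s:=d_I(\cdot,K_s),\quad K_s:=\{y:(y,s)\in\widetilde K\},
\]
with the convention $v_s\equiv+\infty$ when $K_s=\emptyset$. By the $E_{1,0}$-property of $X$ (and the proposition on continuity of $d_I$), each $v_s$ with $K_s\neq\emptyset$ is continuous and has slope $1$ off $K_s$. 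Since $\widetilde K$ is closed, the set-valued map $s\mapsto K_s$ is upper semicontinuous, so a compactness argument gives continuity of $\widetilde u$ (equivalently, it follows from continuity of $d_I$ and $\rho_I=d_I+|\cdot|$). The bound $s[\widetilde u]\le 1$ is immediate because $\widetilde u$ is $1$-Lipschitz for $\rho_I$ and $\rho_I\ge\rho$. Only the lower bound remains to be proved.

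For the lower bound at $(\bar x,\bar t)\notin\widetilde K$, take a minimizer $\bar s$ of the infimum. If $\bar s\ne\bar t$, then moving the time coordinate toward $\bar s$ decreases $\widetilde u$ at unit rate with respect to $\rho$, which gives slope $1$. The boundary cases $\bar t\in\{0,1\}$ cause no trouble here, since we move inward. If every minimizer satisfies $\bar s=\bar t$, then $\widetilde u(x,\bar t)\le v_{\bar t}(x)$ with equality at $\bar x$, so only an upper estimate for $\widetilde u(\cdot,\bar t)$ near $\bar x$ is available from $v_{\bar t}$, and this is the wrong direction for a slope bound. Here I would use Proposition~\ref{prop:E01->E0g}. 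Fix the slice $t=\bar t$ and set $\Omega:=\{x:\widetilde u(x,\bar t)>\widetilde u(\bar x,\bar t)-\varepsilon\}$, or more generally a suitable sublevel-complement on which $w:=\widetilde u(\cdot,\bar t)$ is the value function. The claim is that $w$ coincides with the solution on $X$ of the eikonal equation with $\ell\equiv1$ and boundary data $g:=w|_{\partial\Omega}$. This holds because $w$ is $1$-Lipschitz for $d_I$, so \eqref{CC} holds, and because a dynamic programming principle for $\widetilde u$ along optimal curves, obtained via Lemma~\ref{lax formula}, reaches the level set. The argument parallels the localisation in the proof of Proposition~\ref{prop:E01->E0g}. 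The $E_{1,g}$-property then gives $s[w](\bar x)=1$, hence $s[\widetilde u](\bar x,\bar t)\ge1$.

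The main obstacle is this last step: showing that the slice function $w=\widetilde u(\cdot,\bar t)$ is, near $\bar x$, exactly the $E_{1,g}$ value function on $X$ for some open set and compatible data. Optimal curves of $\widetilde u$ may leave the slice, so I would choose the boundary data to be $w$ itself on the boundary of a small $d_I$-sublevel region and check that $w$ equals the value function there. The inequality $w\le$ value follows from $1$-Lipschitzness; for the reverse inequality I would pass to a minimizing sequence of curves in $\widetilde X$ and project them onto $X$, using that projection does not increase length.
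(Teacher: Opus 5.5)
There is a genuine gap: you have the two inequalities the wrong way round. The step you call immediate, that $s[\widetilde u]\le 1$ because $\widetilde u$ is $1$-Lipschitz for $\rho_I$ and $\rho_I\ge\rho$, does not follow.

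\textbf{Why the upper bound fails as argued.} $1$-Lipschitz continuity for $\rho_I$ bounds $[\widetilde u(z)-\widetilde u(z')]_+/\rho_I(z,z')$. The slope, however, has the \emph{smaller} quantity $\rho(z,z')$ in the denominator. So all you get is $s[\widetilde u]\ge s_I[\widetilde u]$, where $s_I$ is the slope computed with $\rho_I$.

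Example~\ref{pato_simple} shows the implication is false. There $u=d_I(\cdot,\{0\})$ is $1$-Lipschitz for $d_I$, yet $u(x_\infty)-u(x_n)=2^{1-n}$ while $d(x_\infty,x_n)=2^{-n}$, so $s[u](x_\infty)=2$. Your reasoning for this bound never uses the $E_{1,0}$-property. Applied directly to $X$, it would show that every compact quasiconvex space is eikonal, contradicting Proposition~\ref{prop:Pato-simple-properties}. The upper bound is precisely the content of the proposition.

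\textbf{The lower bound is the easy direction.} $\widetilde X$ is compact and $\rho_I$ is finite, so there is a $1$-Lipschitz curve $\gamma$ from $\bar z$ to $\widetilde K$ of length $\widetilde u(\bar z)$. Then $\widetilde u(\gamma(\tau))\le\widetilde u(\bar z)-\tau$ and $\rho(\bar z,\gamma(\tau))\le\tau$ give $s[\widetilde u](\bar z)\ge 1$, with no case distinction. Your remark in the case $\bar s=\bar t$ is reversed for the same reason. The facts $\widetilde u(\cdot,\bar t)\le v_{\bar t}$ with equality at $\bar x$ are exactly what yields $s[\widetilde u](\bar z)\ge s[v_{\bar t}](\bar x)=1$.

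\textbf{What is missing.} You need an argument for $s[\widetilde u]\le 1$, and this is where the $E_{1,0}$-property of $X$ must enter, through the $E_{1,g}$-property of Proposition~\ref{prop:E01->E0g}. The paper proceeds as follows.
\begin{enumerate}
\item Take $z_k=(x_k,t_k)\to\bar z$ realizing the slope, and nearest points $w_k=(y_k,s_k)\in\widetilde K$ converging to some $w_\infty$.
\item For slice sequences ($t_k=\bar t$), put boundary data $g(y_k)=|s_k-\bar t|$ on $S=\{y_k:k\in\N\cup\{\infty\}\}$. Condition \eqref{CC} follows from the minimality of the $w_k$. Solve the $(1,g)$-problem on $X$.
\item The solution $W$ satisfies $W(x_k)=\widetilde u(x_k,\bar t)$ and $W(\bar x)=\widetilde u(\bar z)$. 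Hence the difference quotients of $\widetilde u$ along $z_k$ are controlled by $s[W](\bar x)=1$.
\item General sequences reduce to this case via $\widetilde u(x_k,t_k)\ge\widetilde u(x_k,\bar t)-|t_k-\bar t|$ and the $\ell^1$ form of $\rho$.
\end{enumerate}

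\textbf{How much of your localisation survives.} Your superlevel-set localisation can be made to work for the slice function in your second case. By compactness, nearest points of $\widetilde K$ to $(x,\bar t)$ with $x$ near $\bar x$ have $|s-\bar t|$ small, so projected optimal curves do reach the level set. This would give $s[\widetilde u(\cdot,\bar t)](\bar x)=1$.

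It breaks down when a nearest point lies off the slice close to $\bar x$. For instance, if $(\bar x,\bar s)\in\widetilde K$, the projected optimal curve is constant and never reaches the level set. This is why the paper places the data directly on the projections $y_k$. In either case you would still need the reduction for off-slice sequences.
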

\begin{proof}
    Let $\Omega\subsetneq \widetilde{X}$ be a nonempty open set. 
    We show that the equation
    \[\begin{cases}
        s[u]=1,\quad \text{in }\Omega\\
        u=0,\quad \text{in }\partial \Omega,
    \end{cases}\]
    admits a continuous solution.
    Observe that the intrinsic distance $\rho_I:\widetilde{X}\times\widetilde{X}\to \R$ is well defined and satisfies
    \begin{align}\label{well defined}
    \rho_I((x,s),(y,t))= d_I(x,y)+|s-t|,\quad\text{for all }(x,t),(y,s)\in \widetilde{X}.    
    \end{align}
    Therefore, the function $V:\overline{\Omega}\to \R$ defined by $V(x,t)=\rho_I((x,t),\partial \Omega)$ is well defined and continuous. 
    Let us compute the slope of $V$ in $\Omega$. 
    Let $\overline{z}:=(\overline{x},\overline t)\in \Omega$. Since $\widetilde{X}$ is a compact space and the intrinsic distance is finite valued, there is a $1$-Lipschitz curve $\gamma:[0,\rho_I(\overline{z},\partial \Omega)]\to \overline{\Omega}$ such that $\gamma(0)=\overline{z}$ and $\gamma(\rho_I(\overline{z},\partial \Omega))\in \partial \Omega$.
    Observe now that
    \begin{align*}
        s[V](\overline{z})\geq \lim_{t\to 0}\dfrac{\rho_I(\overline{z},\partial \Omega)-\rho_I(\gamma(t),\partial \Omega)}{\rho(\overline{z},\gamma(t))}\geq\lim_{t\to 0}\dfrac{\rho_I(\overline{z},\partial \Omega)-(\rho_I(\overline{z},\partial \Omega)-t)}{t}=1
    \end{align*}
    In what follows, we show that  $s[V](\overline{z})\leq 1$. Let $\{z_k\}_k:=\{(x_k,t_k)\}_k\subset \Omega $ be such that $z_k\to \overline{z}$ and such that
    \[
    \limsup_{k\to \infty} \frac{V(\bar{z}) - V(z_k)}{\rho(\bar{z},z_k)} = s[V](\bar{z}).
    \]
    
    By compactness of $\overline{\Omega}$, for each $k\in \N$, there is $w_k=(y_k,s_k)\in \partial \Omega$ such that
    \[V(z_k)=\rho_I(z_k,\partial \Omega)=\rho_I(z_k,w_k),\quad \text{for all }k\in \N.\]
    By compactness of $\partial \Omega$ and 
    up to a subsequence of $\{z_k\}_k$, we can and shall assume that\\ ${\lim_{k\to\infty} w_k=w_\infty=(y_\infty,s_\infty)}$ exists. Therefore,
    \[\rho_I(\overline{z},\partial \Omega)=\rho_I(\overline{z},w_{\infty}).\]
    We know show that $s[V](\bar{z})\leq 1$ by showing that the values of $\rho_I(z_k,w_k)$ and $\rho_I(\bar{z},w_{\infty})$ can be produced as a solution of a compatible pair $(1,g)$ in $X$. 
    We split the analysis in two cases.\\

    Assume first that $t_k=\overline t$ for all $k$. In this case, set $S:=\{y_k:~k\in \N\cup\{\infty\}\}$ and $g:S\to \R$ by
    \[g(y_k)=|s_k-\overline{t}|,\quad\text{for all }k\in \N\cup\{\infty\}.\]
    We check that the pair $(1,g)$ satisfies the compatibility condition $(CC)$ on $X$. Indeed, notice first that for any $k,j\in \N\cup\{\infty\}$
    \begin{align*}
        d_I(x_k,y_k)+|s_k-\bar{t}| &= \rho_I(z_k,w_k)=\rho_I(z_k,\partial \Omega)\leq \rho_I(z_k,w_j)\\
        & \leq d_I(x_k,y_j)+|s_j-\bar{t}|\leq d_I(x_k,y_k)+d_I(y_k,y_j)+|s_j-\bar{t}|.
    \end{align*}
    So, we have
    \[g(y_k)=|s_k-\overline t|\leq d_I(y_k,y_j)+|s_j-\overline{t}|= d_I(y_k,y_j)+g(y_j).\]
    Therefore, since $X$ is has the $E_{1,g}$-property by Proposition~\ref{prop:E01->E0g}, the equation
    \[\begin{cases}
        s[u]=1,\quad &\text{in }X\setminus S,\\
        u(y_k)=g(y_k),\quad &\text{for all }k\in \N\cup\{\infty\},
    \end{cases}\]
    admits a continuous solution $W:X\to\R $. 
    Observe that 
    \[W(x_k)= d_I(x_k,y_k)+g(y_k)= \rho_I(z_k,w_k)=V(x_k),\quad\text{for all }k\in\N.\]
    Thus, $W(\overline{x})=V(\overline{z})$.
    Hence,
    \begin{align*}
       s[V](\bar{z}) = \limsup_{k\to\infty} \dfrac{V(\overline{z})-V(z_k)}{\rho(\overline{z},z_k)}=\limsup_{k\to\infty} \dfrac{W(\overline{x})-W(x_k)}{d(\overline{x},x_k)}\leq S[W](\overline{x})=1. 
    \end{align*}

    Now, for the general case, note that for all $k\in \N$ we have 
    \[V(z_k)=\rho_I(z_k,\partial \Omega)\geq \rho_I((x_k,\overline t),\partial \Omega)-|t_k-\overline{t}|.\]
    Let $\varepsilon>0$. Thanks to the first case, we have that
    \[\limsup_{k\to\infty}\frac{V(\overline{z})-V(x_k,\overline t)}{d(x_k,\overline{x})}\leq 1.\]
    Therefore, there is $k_0\in \N$ such that
    \[V(\overline{z})-V(x_k,\overline t)\leq (1+\varepsilon)d(x_k,\overline{x}),\quad \text{for all }k\geq k_0.\]
    So, for any $k\geq k_0$ we get
    \begin{align*}
       s[V](\bar{z}) = \dfrac{V(\overline{z})-V(z_k)}{\rho(\overline{z},z_k)}\leq \dfrac{\rho_I(\overline{z},\partial \Omega)- \rho_I((x_k,\bar{t}),\partial \Omega)+|t_k-\overline{t}|)}{d(x_k,\overline{x})+|t_k-\overline t|} \\
        \leq \dfrac{(1+\varepsilon)d(x_k,\overline{x})+|t_k-\overline t|}{d(x_k,\overline{x})+|t_k-\overline t|}\leq 1+\varepsilon.
    \end{align*}

    By taking  infimum on $\varepsilon>0$, we get that $S[V](\overline{z})\leq 1$.\\
   
   Since $\bar{z}\in\Omega$ is arbitrary, we deduce that $V:\overline{\Omega}\to\R$ is a continuous function such that $S[V]=1$ on $\Omega$ and $V(\partial \Omega)=\{0\}$.
    Since $\Omega$ is an arbitrary nontrivial open subset of $X$, we finally deduce that $(\widetilde{X},\rho)$ has the $E_{1,0}$-property.
\end{proof}

Finally we can prove Theorem~\ref{thm: main}.
\begin{proof}[Proof of Theorem~\ref{thm: main}]
    We denote by $\widetilde{X}:=X\times[0,1]$ the metric space equipped with the distance 
    \[\rho((x,s),(y,t))=d(x,y)+|s-t|,\quad \text{for all }x,y\in X,~s,t\in [0,1].\]
    By Proposition~\ref{proposition:E0g->E01 prod}, $\widetilde{X}$ has also the $E_{1,0}$-property.
    Let $\Omega\subset X$ be a nontrivial open subset, $\ell\in C_b(\Omega)$ and $g\in C(\partial \Omega)$ such that $\inf\ell>0$ and the pair $(\ell,g)$ satisfies~\eqref{CC}. Consider the equation

    \[\begin{cases}
        s[u](x)=\ell(x),\quad&\text{for all }x\in\Omega,\\
        u=g(x),\quad &\text{for all }x\in\partial \Omega.
    \end{cases}\]
    Let $V:\overline{\Omega}\to\R$ be the function defined by
    \[
    V(x):=\inf\left \{\int_0^T \ell(\gamma(t))dt+g(\gamma(T))\,\colon \begin{array}{l}
     ~ T>0,~\gamma\in \mathrm{Lip}_1([0,T],\overline{\Omega}),\\
     \gamma(0)=x,~\gamma(T)\in \partial \Omega,~\gamma((0,T))\subset\Omega
     \end{array}\right\}.
     \]
     Thanks to Lemma~\ref{lemma:ContinuityV} we know that $V$ is well defined and continuous.
    Let $\overline{x}\in \Omega$. By compactness of $\overline{\Omega}$, the infimum defining $V(\overline{x})$ is in fact a minimum, and thus, it follows that $S[V](\overline{x})\geq \ell(\overline{x})$. 
    Fix $\varepsilon>0$ such that $B(\overline{x},\varepsilon)\subset \Omega$.
    We show that \[s[V](\overline{x})\leq K_\varepsilon:=\sup_{B(\overline{x},\varepsilon)}\ell.\]
    
    Note that $K_{\varepsilon}>0$. Reasoning towards a contradiction, assume that $s[V](\overline{x})>K_\varepsilon$. Therefore, there is ${\{x_k\}_k\subset B(\overline{x},\varepsilon)}$, with $\lim_k x_k=\overline{x}$, such that 
    \begin{align}\label{bad sequence}
      s[V](\overline{x})\geq\lim_{k\to\infty} \dfrac{V(\overline{x})-V(x_k)}{d(\overline{x},x_k)} > K_\varepsilon.  
    \end{align}
    Denote by $x_\infty:=\overline{x}$.
    Up to a subsequence and since $d_I$ is continuous, we can assume that for any $k,l\in \N\cup\{\infty\}$, there is a $1$-Lipschitz curve $\gamma_{k,l}:[0,d_I(x_k,x_l)]\to B(\overline{x},\varepsilon)$ such that $\gamma_{k,l}(0)=x_k$ and $\gamma_{k,l}(d_I(x_k,x_l))=x_l$.
    So, by  definition of $V$, we deduce that
    \begin{align}\label{a future CC}
        |V(x_k)-V(x_l)|\leq \int_{0}^{d_I(x_k,x_l)}\ell(\gamma_{k,l}(t))dt\leq K_{\varepsilon}d_I(x_k,x_l),\quad\text{for all }k,l\in \N\cup\{\infty\}.
    \end{align}
    Consider now the set
    \[\widetilde{S}:=\{(x_k,0):k\in \N\cup\{\infty\}\}\subset \widetilde{X}\]
    and the boundary condition $h(x_k,0) = V(x_k)/K_{\varepsilon}$. Then, we can set the equation
    \[\begin{cases}
        s[w](x,t)=1,\quad&\text{for all}~(x,t)\in\widetilde{X}\setminus\widetilde{S}, \\
        w(x,t)=h(x,t) := V(x)/K_{\varepsilon},\quad &\text{for all }(x,t)\in\widetilde{S}.
    \end{cases}\]
    Since $\widetilde{X}$ is a compact and has the $E_{1,0}$ property, it has the $E_{1,g}$ property as well, by Proposition~\ref{prop:E01->E0g}. Therefore, the above equation admits a continuous solution $W:\widetilde{X}\to \R$. Indeed, \eqref{a future CC} implies that the pair $(1,h)$ satisfies $(CC)$. Moreover, we have that
    \begin{align}\label{infimum in the product}
        W(x,t)=\inf_{j\in \N\cup\{\infty\}}\{V(x_j)/K_{\varepsilon} +\underbrace{d_I(x,x_j)+ |t-0|}_{\rho_I((x,t),(x_j,0))} \}.
    \end{align}
    Due to \eqref{a future CC} and \eqref{infimum in the product}, we deduce that for all $k\in \N\cup\{\infty\}$
    \[W(x_k,1)= V(x_k)/K_{\varepsilon}+1.\]
    That is, for each $k\in \N$ the infimum~\eqref{infimum in the product} is attained at $j=k$.
    However, combining~\eqref{bad sequence} and the fact that $W$ has local slope $1$, we get
    \begin{align*}
    K_\varepsilon&<\lim_{k\to\infty} \dfrac{V(\overline{x})-V(x_k)}{d(\overline{x},x_k)}\\
    &=\lim_{k\to\infty} \dfrac{K_{\varepsilon}(W(\overline{x},1)-W(x_k,1))}{\rho((\overline{x},1),(x_k,1))}\leq K_\varepsilon,
    \end{align*}
    which is a contradiction. Therefore, $s[V](\overline{x})\leq K_\varepsilon$. 
    Since $\varepsilon>0$ can be chosen arbitrarily small and $\ell $ is continuous, we deduce that
    \[s[V](\overline{x})\leq \inf_{\varepsilon>0}K_\varepsilon = \ell(\overline{x}).\]
    
    The proof is now complete.
\end{proof}

\section{Comparison with other notions of solutions}\label{sec:Comparison}

Recall from \cite{Liu2021:Equivalence} that when $(X,d)$ is a length space, then the solutions of \eqref{eq: eikonal complete} coincide with the viscosity solutions in the sense of Giga, Hamamuki and Nakayasu \cite{Giga2015:Eikonal} and in the sense Gangbo and Święch \cite{Gangbo2015:Metric,Gangbo2014:Optimal}.\\ 

Even though the notion of viscosity solution of Gangbo and Święch requires the ambient space to be length, the viscosity solution of Giga, Hamamuki and Nakayasu can be defined in any rectifiably connected space, and it is always given by the optimal control formula~\eqref{eq:OptimalControl-formula}. Moreover, as proven in \cite{Liu2021:Equivalence}, if the intrinsic distance of $(X,d)$ is continuous then the viscosity solution of Giga, Hamamuki and Nakayasu coincides with the viscosity solution of Gangbo and Święch in $(X,d_I)$ and with the pointwise solution of
\begin{align}\label{main equation - intrinsic}
    \begin{cases}
        s_I[u](x)=\ell(x),\quad &\text{for all }x\in\Omega,\\
        u(x)=g(x),\quad&\text{for all }x\in \partial\Omega.
    \end{cases}
\end{align}
where $s_I[u](x)$ is the slope of $u$ at $x$ using the intrinsic distance $d_I$ instead of $d$, see~\eqref{eq:Def-Slope}.
However, if the space $(X,d)$ is not length, the solution $u$ of~\eqref{main equation - intrinsic} may fail to have first-order compatibility with respect to the original metric, in the sense that $s[u](x)$ might differ from $\ell(x)$ (it always holds that $u$ will be a super solution, that is $s[u](x)\geq \ell(x)$). One could interpret, in the context of compact metric spaces, the eikonal spaces exactly as those where the solution $u$ of~\eqref{main equation - intrinsic} verify $s[u](x) = s_I[u](x) = \ell(x)$ for all $x\in \Omega$. A summary is depicted in Figure~\ref{fig:scheme-spaces}:
\begin{figure}[ht]
\centering
\begin{tikzpicture}[scale=0.95, every node/.style={font=\small}]
    \draw[thick, rounded corners=16pt] (0,0.85) rectangle (11,6.8);
    \node at (5.5,6.45) {\textbf{Rectifiably connected spaces}};
    \node at (5.5,6.0) {\eqref{eq:OptimalControl-formula} may not be continuous.};

    \draw[thick, rounded corners=16pt] (0.8,1.1) rectangle (10.2,5.5);
    \node at (5.5,5.0) {\textbf{$d_I$ continuous}};
    \node at (5.5,4.55) {\eqref{eq:OptimalControl-formula} solves \eqref{main equation - intrinsic}. \eqref{eq: eikonal complete} may not have solution.};

    \draw[thick, rounded corners=16pt] (1.5,1.35) rectangle (9.5,4);
    \node at (5.5,3.55) {\textbf{Compact eikonal spaces}};
    \node at (5.5,3.2) {\eqref{eq:OptimalControl-formula} solves \eqref{eq: eikonal complete} and \eqref{main equation - intrinsic}};

    \draw[thick, rounded corners=16pt] (1.9,1.45) rectangle (9.1,2.6);
    \node at (5.5,2.25) {\textbf{Compact geodesic spaces}};
\end{tikzpicture}
\caption{Classification of compact metric spaces and notions of solutions.}\label{fig:scheme-spaces}
\end{figure}

\begin{enumerate}
    \item In rectifiably connected spaces, we use the notion of solution of Giga, Hamamuki and Nakayasu, which is given by the formula~\eqref{eq:OptimalControl-formula}, see \cite{Giga2015:Eikonal}. This solution may fail to be continuous, and it is only a super solution of~\eqref{eq: eikonal complete}.
    \item If $d_I$ is continuous, then solution of Giga, Hamamuki and Nakayasu, coincides with the pointwise solution of~\eqref{main equation - intrinsic}, see \cite{Liu2021:Equivalence}. The solution is continuous, but it is only a super solution of~\eqref{eq: eikonal complete}.
    \item In the compact setting, if $(X,d)$ is an eikonal space, then the pointwise solution of~\eqref{main equation - intrinsic} and the pointwise solution of~\eqref{eq: eikonal complete} coincide.
\end{enumerate}

If the metric space is not compact, the classification gets more complicated. Length spaces remain eikonal, but eikonal spaces may not even be connected, and are therefore not included in neither the class of spaces where $d_I$ is continuous nor the class of rectifiably connected spaces.
\begin{example}
Consider two parallel copies of $\R$ in $\R^2$, let us say $L_1 = \R\times\{0\}$ and $L_2 = \R\times \{1\}$, and set $X = L_1\cup L_2$, endowed with the induced distance from $\R^2$. This space is clearly not rectifiably connected. However, every eikonal equation of the form $\eqref{eq: eikonal complete}$ admits solutions. Indeed, take $\Omega\subset X$ a proper open set, a continuous function $\ell:\Omega\to (0,+\infty)$ and a continuous function $g:\partial \Omega\to \R$ verifying~\eqref{CC}. Note that in this space $\partial \Omega$ could be empty. In this case, we assume that~\eqref{CC} holds by vacuity.  We distinguish two cases:
\begin{enumerate}
    \item $L_1\not\subset \Omega$ and $L_2\not\subset \Omega$: In this case, we just solve two independent equations with $\Omega_1 = L_1\cap \Omega$ and $\Omega_2 = L_2\cap \Omega$.
    \item $L_1\subset \Omega$: In this case, since $\Omega$ must be a proper subset of $X$, we solve the equation in $\Omega_2 = L_2\cap \Omega$, obtaining the solution $u_2:\overline{\Omega}_2\to\R$, and define
    \[
    u:x = (x_1,x_2)\mapsto u(x) = \begin{cases}
        \int_0^{x_1} \ell(t,0)dt + c\quad &\text{ if }x\in L_1,\\
        u_2(x) \quad&\text{ if }x\in \overline{\Omega_2},
    \end{cases}
    \]
    where $c\in\R$ is a constant. Note that in this case there are infinitely many solutions. If $\partial \Omega = \emptyset$, that is $\Omega = L_1$, we simply set $u(x) = \int_0^{x_1} \ell(t,0)dt + c$. The case when $L_2\subset \Omega$ is analogous.
\end{enumerate}
In any case, the eikonal equation admits at least one solution, and so this is an eikonal space.\hfill$\Diamond$
\end{example}

\paragraph{Open questions:} 
In this manuscript we have studied in depth the slope eikonal equations in compact spaces. 
From this, we would like to point out the following two interesting lines of research: the study and classification of noncompact eikonal spaces, as well as, the study of well-posedness of more complicated Hamilton-Jacobi equations.  On the other hand, one can observe from our proof in Section~\ref{sec:main} that Corollary~\ref{cor:intrinsic-distance-to-K} can be improved by reducing the family of closed sets only to countable ones: a compact metric space $(X,d)$ is eikonal if and only if $d_I(\cdot,K)$ has slope $1$ on $X\setminus K$, for every $K\subset X$ closed and countable. 
We conjecture that this can be further reduced to singletons.\\

\textbf{Question 1:} Let $(X,d)$ be a compact metric space such that $d_I(\cdot,x)$ has slope 1 on $X\setminus\{x\}$ for every $x\in X$. Is $(X,d)$ an eikonal space?
\bibliographystyle{abbrv}
\bibliography{biblio}

	\newpage
	
	\rule{5 cm}{0.5 mm} \bigskip
    \newline\noindent David SALAS, Francisco VENEGAS M.
	
	\medskip
	
	\noindent Instituto de Ciencias de la Ingenier\'{i}a, Universidad de
	O'Higgins\newline Av. Libertador Bernardo O'Higgins 611, Rancagua, Chile
	\smallskip
	
	\noindent E-mail: \texttt{david.salas@uoh.cl}, \texttt{francisco.venegas@uoh.cl} \newline\noindent
	\texttt{http://davidsalasvidela.cl}, \texttt{https://sites.google.com/view/francisco-venegas-m}
	\medskip
	
	\noindent Research supported by the grant: \smallskip\newline CMM 
	FB210005 BASAL, FONDECYT 1251159,  FONDECYT 3250857 (Chile).\newline\vspace{0.2cm}

    \noindent Sebasti{\'a}n TAPIA-GARC{\'I}A
	
	\medskip
	
	\noindent Institute of Statistics and Mathematical Methods in Economics,
	E105-04 \newline TU Wien, Wiedner Hauptstra{\ss}e 8, A-1040
	Wien\smallskip\newline\noindent E-mail: \texttt{
		sebastian.tapia.garcia@tuwien.ac.at}\newline\noindent
	\texttt{https://sites.google.com/view/sebastian-tapia-garcia}
	\medskip
	
	\noindent Research partially supported by the Austrian Science Fund grant \textsc{FWF
		P-36344N}.\newline\vspace{0.2cm}

\end{document}